\numberwithin{equation}{section}
\newcommand{\margnote}[1]{
\ifthenelse{\boolean{shownotes}}%
{\marginpar{\raggedright\tiny\texttt{#1}}}%
{}%
}
\newcommand{\hole}[1]{
\ifthenelse{\boolean{shownotes}}%
{\begin{center} \fbox{ \rule {.25cm}{0cm}
\rule[-.1cm]{0cm}{.4cm} \parbox{.85\textwidth}{\begin{center}
\texttt{#1}\end{center}} \rule {.25cm}{0cm}}\end{center}}
{}
}
\theoremstyle{plain}
\newtheorem{lemma}{Lemma}[section]
\newtheorem{theorem}[lemma]{Theorem}
\newtheorem{proposition}[lemma]{Proposition}
\newtheorem{corollary}[lemma]{Corollary}
\theoremstyle{definition}
\newtheorem{remark}[lemma]{Remark}
\newtheorem{definition}[lemma]{Definition}
\theoremstyle{remark}
\newcommand{\Id}{\mathrm{I}}
\newcommand{\bm}{\mathbf{m}}
\newcommand{\bH}{\mathbf{H}}
\newcommand{\bh}{\mathbf{h}}
\DeclareFontFamily{U}{mathx}{}
\DeclareFontShape{U}{mathx}{m}{n}{<-> mathx10}{}
\DeclareSymbolFont{mathx}{U}{mathx}{m}{n}
\DeclareMathAccent{\widehat}{0}{mathx}{"70}
\DeclareMathAccent{\widecheck}{0}{mathx}{"71}
\newcommand{\E}{\mathbb{E}}
\newcommand{\R}{\mathbb{R}}
\newcommand{\C}{\mathbb{C}}
\newcommand{\Z}{\mathbb{Z}}
\newcommand{\cN}{{\mathcal{N}}}
\newcommand{\cT}{{\mathcal{T}}}
\newcommand{\cE}{{\mathcal{E}}}
\newcommand{\cL}{{\mathcal{L}}}
\newcommand{\ocL}{{\overline{\cL}}}
\newcommand{\cD}{{\mathcal{D}}}
\newcommand{\cQ}{{\mathcal{Q}}}
\newcommand{\cS}{{\mathcal{S}}}
\newcommand{\cP}{{\mathcal{P}}}
\newcommand{\cA}{{\mathcal{A}}}
\newcommand{\cB}{{\mathcal{B}}}
\newcommand{\conv}{\mathrm{conv}}
\newcommand{\intconv}{\mathrm{int}\,\mathrm{conv}}
\newcommand{\tbet}{\widetilde{\beta}}
\renewcommand{\Re}{\mathrm{Re}\,} 
\renewcommand{\Im}{\mathrm{Im}\,}
\newcommand{\ep}{\epsilon}
\newcommand{\brt}{\bar{\theta}}
\newcommand{\brtp}{{\brt \,}'}
\newcommand{\varep}{\varepsilon}
\newcommand{\re}[1]{\textrm{Re}\left(#1\right)}
\newcommand{\pld}[2]{\left \langle #1 \, , #2 \right \rangle_{L^2}}
\newcommand{\phu}[2]{\left \langle #1 \, , #2 \right \rangle_{H^1}}
\newcommand{\nld}[1]{\left \| #1 \right \|_{L^2}}
\newcommand{\nhu}[1]{\left \| #1 \right \|_{H^1}}
\newcommand{\nhd}[1]{\left \| #1 \right \|_{H^2}}
\newcommand{\nwse}[1]{\left \| #1 \right \|_{H^1\times L^2}}
\newcommand{\res}[1]{\rho\left ( #1 \right )}
\DeclareMathOperator{\opl}{\mathcal{L}}
\DeclareMathOperator{\ran}{Ran}
\begin{document}
\title[Stability of moving N\'{e}el walls in ferromagnetic thin films]{Stability of moving N\'{e}el walls in ferromagnetic thin films}

\author[A. Capella]{Antonio Capella}

\address{{\rm (A. Capella)} Instituto de Matem\'aticas\\Universidad Nacional Aut\'onoma de M\'exico\\Circuito Exterior s/n, Ciudad Universitaria\\C.P. 04510 Cd. de M\'{e}xico (Mexico)}

\email{capella@matem.unam.mx}

\author[C. Melcher]{Christof Melcher}

\address{{\rm (C. Melcher)} 
Lehrstuhl f\"ur Angewandte Analysis \\ RWTH Aachen\\D-52056 Aachen (Germany)}

\email{melcher@rwth-aachen.de}

\author[L. Morales]{Lauro Morales}

\address{{\rm (L. Morales)} Instituto de Investigaciones en Matem\'aticas Aplicadas y en Sistemas\\Universidad Nacional Aut\'onoma de M\'exico\\Circuito Escolar s/n, Ciudad Universitaria\\C.P. 04510 Cd. de M\'{e}xico (Mexico)}

\email{lauro.morales@iimas.unam.mx}

\author[R. G. Plaza]{Ram\'on G. Plaza}

\address{{\rm (R. G. Plaza)} Instituto de Investigaciones en Matem\'aticas Aplicadas y en Sistemas\\Universidad Nacional Aut\'onoma de M\'exico\\Circuito Escolar s/n, Ciudad Universitaria\\C.P. 04510 Cd. de M\'{e}xico (Mexico)}

\email{plaza@aries.iimas.unam.mx}

\begin{abstract}
This paper studies moving 180-degree N\'eel walls in ferromagnetic thin films under the reduced model for the in-plane magnetization proposed by Capella, Melcher and Otto \cite{CMO07}, in the case when a sufficiently weak external magnetic field is applied. It is shown that the linearization around the moving N\'eel wall's phase determines a spectral problem that is a relatively bounded perturbation of the linearization around the static N\'eel wall, which is the solution when the external magnetic field is set to zero and which is spectrally stable. Uniform resolvent-type estimates for the linearized operator around the static wall are established in order to prove the spectral stability of the moving wall upon application of perturbation theory for linear operators. The spectral analysis is the basis to prove, in turn, both the decaying properties of the generated semigroup and the nonlinear stability of the moving N\'eel wall under small perturbations, in the case of a sufficiently weak external magnetic field. The stability of the static N\'eel wall, which was established in a companion paper \cite{CMMP23}, plays a key role to obtain the main result.
\end{abstract}

\keywords{Traveling N\'eel walls, ferromagnetic thin films, spectral stability, resolvent estimates.}

\subjclass[2020]{35B35, 35Q60, 35C07, 82D40, 78M22, 47A10.}

\maketitle
\setcounter{tocdepth}{1}



\section{Introduction}
\label{secintro}

The emergence of domain walls and the study of their dynamics constitute one of the most fundamental topics in the theory of ferromagnetic materials. The term \emph{domain wall} refers to a narrow transition region between opposite magnetization vectors inside a ferromagnet (cf. Hubert and Sch\"afer \cite{HuSch98}). In order to describe the evolution of the magnetization inside a ferromagnetic material, Landau and Lifshitz \cite{LanLif35} introduced in 1935 a model system of equations (later reformulated and re-derived by Gilbert \cite{Gilb55}) known as the Laundau-Lifshitz-Gilbert (LLG) micromagnetic modeling framework. The model relates the observed magnetization patterns to the result of minimizing a micromagnetic energy functional, and it is posed in terms of a damped gyromagnetic precession of a (unit) magnetization vector field. The LLG model and its extensions support the description of the magnetization in response to external force fields and damping terms as well (cf. \cite{Gilb04,BrKOh12}). It was N\'eel \cite{Neel55} who first pointed out that, when the thickness of a certain ferromagnet becomes sufficiently small with respect to the exchange characteristic length, it then becomes energetically favorable for the magnetization to rotate in the thin film plane, giving rise to a \emph{N\'eel wall}. This wall separates two opposite magnetization regions by an in-plane rotation oriented along an axis. 

In order to study the dynamics of N\'eel walls, Capella \emph{et al.} \cite{CMO07} proposed a one-dimensional thin film reduction of the micromagnetic energy (previously outlined by Muratov and Osipov \cite{MuOs06} for numerical purposes), which profits from the gyrotropic nature of the LLG model and from the shape of anisotropy effects from stray-field interactions, resulting into a thin-film layer equation for the in-plane magnetization's phase. The effective equations encompass a wave-type dynamics for such N\'eel wall's phase. The authors in \cite{CMO07} proved the existence of wave profile solutions to the resulting model in both the static (in the absence of an external magnetic field) and in the dynamical (when a weak in-plane external magnetic field is applied) cases. These solutions effectively describe the dynamics of N\'eel walls in the thin film limit (see also Chermisi and Muratov \cite{ChMu13} and Muratov and Yan \cite{MuYa16} for further information).

From a mathematical perspective, micromagnetics pose many challenging problems, ranging from the calculus of variations to stochastic analysis, due to the non linear, non-local nature of the model equations. Regarding their dynamics, one of the main problems is to understand the behavior of these domain walls under small perturbations. Therefore, the dynamical stability of such structures is a fundamental feature, not only to validate the mathematical model, but also to enhance the numerical simulations performed by physicists and engineers in order to design new ferromagnetic materials (see, e.g., \cite{LabBer99}). As far as we know, there are few results on the stability of ferromagnetic domain walls under a dynamical point of view reported in the scientific literature; for an abridged list of works the reader is refereed to Carbou and Labb\'e \cite{CarLab06}, Carbou \cite{Carb10}, Carbou, Massaoui and Rachi \cite{CaMR22}, Krukowski \cite{Kruk87}, Takasao \cite{Tak11} and some of the references cited therein. With the exception of the recent result by Carbou \emph{et al.} \cite{CaMR22} (which studies the spectral stability of a constant coefficient linearized operator around steady states in ferromagnetic rings), most of the aforementioned works establish \emph{a priori} energy estimates on the evolution equations governing eventual perturbations of the profile solutions and study their decay properties. It is to be observed that energy methods strongly rely on the intrinsic structure of the model equations. 

In a companion paper \cite{CMMP23}, we established the nonlinear stability of the static 180-degree N\'eel wall profile under the reduced wave-type dynamics for the in-plane magnetization proposed by Capella \emph{et al.} \cite{CMO07}, which is the energy minimizer \emph{in the absence of an external magnetic field}. For that purpose, in \cite{CMMP23} we adopted a different strategy to tackle the stability problem. Motivated by the (now standard) ``spectral stability implies nonlinear stability'' methodology (see, for example, the seminal works by Alexander \emph{et al.} \cite{AGJ90}, Pego and Weinstein \cite{PW94,PW92b}, Zumbrun and Howard \cite{ZH98}, and the surveys by Sandstede \cite{San02} and Kapitula and Promislow \cite{KaPro13}), we performed the first rigorous proof of the nonlinear stability of the static N\'eel wall based on a spectral study of the linearization around the wall phase's profile. The latter is a non-local, block operator matrix posed on a suitable energy space. In our analysis, we proved that this operator is spectrally stable, that is, its spectrum is contained in the stable half-plane of complex numbers with strictly negative real part, except for a simple zero eigenvalue associated to translations of the profile. Two important features of the linearization about the static wall are, first, the presence of a \emph{spectral gap} (that is, a positive distance from the zero eigenvalue to the rest of the spectrum), which allows to conclude the exponential decay of the semigroup; and, second, the simplicity of the eigenvalue zero, which allows to nonlinearly modulate perturbations depending on translations alone. The main difficulty of the analysis, however, consisted on the non-local nature of the operator, for which the location of the essential spectrum involved the establishment of a relative-compactness property based on $L^2$-equicontinuity of Fourier operator symbols (see \cite{CMMP23} for details). Outside the one-dimensional eigenspace generated by the derivative of the static N\'eel profile, the infinitesimal semigroup generated by this linear block operator decays exponentially. Then, upon application of Gearhart-Pr\"uss theorem (cf. \cite{CrL03,EN00}) and of an abstract nonlinear stability result by Lattanzio \emph{et al.} \cite{LMPS16} (see also \cite{Sat76}), we were able to deduce the nonlinear stability of the static N\'eel profile from the previous spectral information.

In this paper we pose the following question: what happens to the N\'eel wall profile in the presence of an external magnetic field? Capella \emph{et al.} \cite{CMO07} have already proved that, if the applied external magnetic field, $\mathbf{H} = H \mathbf{e}_y$, is sufficiently weak with $|H| \ll 1$, where $H$ denotes its intensity, then there exists a traveling wave profile solution associated to a \emph{moving N\'eel wall}. This solution is a phase profile that travels with speed $c$, which is also small, $c = O(|H|)$. In this work we study the dynamical stability of such a moving N\'eel wall profile under small perturbations of the wall's phase in appropriate energy spaces, once the applied magnetic field has been fixed. We employ the same methodology: we linearize around the  moving profile and analyze the spectrum of the resulting operator. At this point it is important to remark, however, that the analysis presented here \emph{is not incremental} (that is, we do not perform nor repeat the same steps as in the case of the static operator) but rather complementary: we depart from the stability result for the static operator and prove that the linearization around a moving N\'eel wall with speed $c$, namely, a linearized non-local block operator $\cA_c$, is a relatively bounded perturbation of the linearization around the static N\'eel wall, $\cA$. This observation allows us to apply standard perturbation theory of linear operators (cf. Kato \cite{Kat80}) to conclude the spectral stability of the linearized operator for the moving wall. For that purpose, we prove that the resolvent of the operator $\cA_c$ can be approximated by that of the static operator $\cA$. 

Despite the simplicity of this approach, the analysis is far from trivial. Indeed, our conclusions depend on the establishment of certain resolvent-type estimates for the static operator $\cA$. The whole central Section \ref{secresests} is devoted to accomplish this task. Although the analysis is elementary and the estimations are direct, the latter are, however, quite convoluted. For instance, it is necessary to keep track of resolvent bounds on different regions of the resolvent set and on a small neighborhood of the origin (see the proof of Theorem \ref{lem:spectral_gap} below). Upon application of standard perturbation theory for linear operators we then conclude the spectral stability of the moving wall for small values of $c$ (or, equivalently, of the applied magnetic field). The fact that the magnetic field determines uniquely the speed of the wall implies that the manifold generated by the traveling profile is  one dimensional and we only need to modulate perturbations via translations of the profile. From a spectral viewpoint, this is tantamount to the translation zero eigenvalue of the linearized operator being simple. Perturbation theory also yields that the spectrum of the perturbed operator remains close to that of the static operator. In this fashion, we recover spectral stability with a spectral gap and the simplicity of the translation eigenvalue in the case of small applied magnetic fields. In other words, we prove the persistence of the spectral stability properties of the static wall's phase in the case of a moving wall under a weak external field. We then apply Lumer-Phillips theorem to generate an exponentially decaying semigroup outside the one-dimensional eigenspace. We use this property and the simplicity of the eigenvalue to prove, in turn, the nonlinear stability of the moving wall under small perturbations, just like in the static case \cite{CMMP23}. This is perhaps the most standard part of the analysis. We include it for the sake of completeness; however, we gloss over many details which can be found somewhere else.

The contributions of this paper can be summarized as follows:
\begin{itemize}
\item[--] we establish new regularity properties for the moving N\'eel wall which are required for the stability analysis;
\item[--] we pose the spectral stability problem for the moving wall and, motivated by energy minimization arguments (see \cite{CMMP23}), we select the space of $H^1$ perturbations for the phase;
\item[--] we prove that the spectral equation can be recast as a relatively bounded perturbation of the linearized operator of the static N\'eel wall of the form $\cA_c = \cA + \cB_c$, where $\cA_c $ is the linearized operator around the moving wall with speed $c = O(|H|)$, $|H| \ll 1$, $\cB_c$ is an $\cA$-bounded operator, and $\cA$ denotes the linearized operator around the static N\'eel wall but now evaluated in a (Galilean) moving frame of the form $z = x-ct$ (see Lemma \ref{lem:growth_bounds} below);
\item[--] we establish uniform resolvent-type estimates on the static oerator $\cA$ which allows to conclude the spectral stability of the linearized operator $\cA_c$ upon application of perturbation theory for linear operators (see Theorems \ref{lem:spectral_gap} and \ref{theospectralstab} below);
\item[--] we prove that the operator $\cA_c$ is the infinitesimal generator of a $C_0$-semigroup which is exponentially decaying outside the one-dimensional eigenspace associated to the eigenvalue zero; and,
\item[--] we close the analysis by showing that the moving N\'eel wall is nonlinearly stable under small perturbations of its phase in the energy space.
\end{itemize}

\subsection*{Plan of the paper}

The remainder of the paper is structured as follows. In order to be able to state our main result (see Theorem \ref{maintheorem} below), in Section \ref{secassump} we recall the thin film reduction of the LLG model and describe the stability results for the static wall. Section \ref{secprel} contains some important properties of the moving N\'eel wall's phase and its relation to the static profile, which are needed for the stability analysis. Section \ref{secperturb} establishes the perturbation equations to be studied and sets up the associated spectral problem. It also contains the relative boundedness result that motivates our methodological approach (Lemma \ref{lem:growth_bounds}). The central Section \ref{secresests} is devoted to establish resolvent-type estimates for the static operator which allow to apply perturbation theory and to conclude the spectral stability of the moving wall (a short Appendix \ref{secappendix} contains some pointwise estimates that are needed for the proof). Section \ref{secsemiggen} focuses on the (standard) generation of the $C_0$-semigroup and on its decaying properties. The final Section \ref{sec:nonlinear_stability} establishes the nonlinear stability of the moving N\'eel wall. We close the exposition with some final remarks (Section \ref{secdiscuss}).

\subsection*{Notations}
We denote the spaces  $L^2(\R, \C), \ H^1(\R, \C)$ and $H^2(\R, \C)$ of complex-valued functions as $L^2, \ H^1$ and $H^2$. Their real-valued counterparts are denoted as $L^2(\R), \ H^1(\R)$ and $H^2(\R)$, respectively. 
We write $\conv(\Gamma)$ to denote the convex hull of a given set $\Gamma\subset \C$ and $\intconv (\Gamma)$ to denote its interior. The set of unitary vectors in $\R^n$ is denoted by $\mathbb{S}^{n-1}$. For any number or complex-valued function, the operation $(\cdot)^*$  denotes  complex conjugation. The operators $\widehat{\cdot}:L^2\to L^2$ and $\widecheck{\cdot}:L^2\to L^2$ stand for the Fourier transform and its inverse, respectively. Also, $\xi$ represents the variable in the Fourier domain. The half-Laplacian is defined by the relation $(-\Delta)^{1/2}u = (|\xi|\widehat{u})\,\widecheck{}$, and $\|u\|_{\dot{H}^{1/2}}$ denotes the fractional $H^{1/2}$-seminorm of the function $u\in L^2$ given by $\|u\|_{\dot{H}^{1/2}}:=\nld{|\xi|^{1/2}\widehat{u}}$. For any linear, closed and densely defined operator operator, $\cL : D(\cL) \subset X \to Y$, with $X$, $Y$ Banach spaces and domain $D(\cL)\subset X$, the resolvent set, $\rho(\cL)$, is defined as the set of complex numbers $\lambda \in \C$ such that $\cL - \lambda$ is injective and onto, and $(\cL - \lambda)^{-1}$ is a bounded operator. The spectrum of $\cL$ is the complex complement of the resolvent, $\sigma(\cL) = \C \backslash \rho(\cL)$.

\section{Equations, assumptions and main result}
\label{secassump}

In this Section we describe the micromagnetic equations in the thin film limit, recall the previous stability results of the static N\'eel wall and state the main result of this paper.

\subsection{The micromagnetic model and N\'{e}el walls}
\label{secLLG}

The time evolution of the magnetization distribution on a ferromagnetic body, $\widetilde{\Omega} \subset \R^3$, is governed by the Landau-Lifshitz-Gilbert (LLG) equation (cf. \cite{LanLif35,Gilb55}):
\begin{equation}
\label{eqLLG}
\bm_t+ \bm \times (\gamma \bH_{\mathrm{eff}} - \alpha \bm_t) = 0, 
\end{equation}
where $\bm : \widetilde{\Omega} \times (0,\infty) \to \mathbb{S}^2 \subset \R^3$ is the magnetization field, $\alpha > 0$ is a non-dimensional damping coefficient (Gilbert factor) and $\gamma > 0$ is the (constant) absolute value of the gyromagnetic ratio with dimensions of frequency (see, e.g., Gilbert \cite{Gilb04}). The effective field, $\bH_{\mathrm{eff}} = \bh - \nabla \E(\bm)$, consists of the applied field $\bh$ and the negative functional gradient of the micromagnetic interaction energy $\E(\bm)$, which, in the absence of external fields, is given by
\[
 \E(\bm) = \frac{1}{2}\Big( d^2 \int_{\widetilde{\Omega}} |\nabla \bm|^2 \, dx + \int_{\R^3} |\nabla U|^2 + Q \int_{\widetilde{\Omega}} \Phi(\bm) \, dx \Big).
\]
The parameter $d> 0$ is the exchange length and the \textit{stray field}, $\nabla U$, is defined uniquely via the distribution equation $\Delta U = \textrm{div}\,(\bm \chi_{\widetilde{\Omega}})$ ($\chi_A$ denotes the indicator function of the set $A$). The last integral models crystalline anisotropies via a penalty energy, for which $\Phi$ acts as a penalty function and it has usually the form of an even polynomial in $\bm \in \mathbb{S}^2$. The parameter $Q>0$ measures the relative strength of anisotropy penalization against stray-field interaction.

In the thin-film regime, that is, when $\widetilde{\Omega} = \Omega \times (0,\delta)$ with $\Omega \subset \R^2$ and $0 < \delta \ll d$, it is usually assumed that the magnetization is independent of the $x_3$ variable and $\ell$-periodic in the ${\bf e}_2$ direction, namely,
\[
{\bf m}(x_1,x_2+\ell) = \bm(x_1,x_2) \quad\text{for any } x = (x_1,x_2)\in\R^2.
\]
Moreover, it can be justified that the material underlies uniaxial anisotropy in the ${\bf e}_2$ direction, with $\Phi({\bf m}) = 1- m_2^2$. Under the appropriate scalings, Capella \emph{et al.} \cite{CMO07} proved that the magnetization, ${\bf m} = (m,0)$ with $m = (m_1,m_2)$, is a solution to the following variational problem
 \begin{equation}
 \label{limit-varppio}
 \begin{aligned}
E_0 (m) &= \tfrac{1}{2} \left({\mathcal Q} 
\Vert m'\Vert_{L^2(\R)}^2 + \Vert m_1 \Vert_{\dot{H}^{1/2}(\R)}^2 + \Vert m_1\Vert_{L^2(\R)}^2    \right) \to \min,\\ 
 &m : \R \to {\mathbb S}^2, \qquad \text{with} \;\;  
 m(\pm\infty) =(0,\pm 1),
 \end{aligned}
\end{equation}
where $' = d/d x_1$ and the constant $\mathcal{Q} > 0$ is a rescaled relative strength $Q$. Since the left translation operator is an isometry in $L^2$, the expression of $E_0(m)$ is invariant under spatial translations. This invariance is inherited by the energy, yielding that minimizers of \eqref{limit-varppio} are unique up to translations. Despite this invariance, $E_0(m)$ is a strictly convex functional on $m_1$  because  $|m'|^2=(m_1')^2/(1-m_1^2)$. Thus, the variational principle \eqref{limit-varppio} has a minimizer for any ${\mathcal Q}>0$. The minimizer that satisfies $m_1(0)=1$ is called the {\em N\'eel wall profile}. We refer to $E_0(m)$ as the {\em N\'eel wall energy}. In other words, in the thin film regime the normal component of the magnetization for any ferromagnetic sample is penalized by the geometry in such a way that this component must vanish as the height of the sample does (see \cite{CMO07,GC04,Melc10} for further details).

Consequently, the in-plane magnetization is completely determined by its phase $\theta \in (-\pi/2,\pi/2)$ through the relation $m = (m_1, m_2) = (\cos \theta, \sin \theta)$, and the variational problem that defines a N\'eel wall becomes the following variational problem for the N\'eel wall's phase,
\begin{equation}
 \label{varprob}
 \begin{aligned}
 \mathcal{E}(\theta) &= \frac{1}{2} \big( \|\theta'\|_{L^2}^2 + \|\cos \theta\|^2_{{\dot H}^{1/2}} + \|\cos \theta\|_{L^2}^2 \big) \; \rightarrow \;  \min\\ 
 \theta : \R &\to (-\pi/2,\pi/2), \qquad \text{with} \;\; \theta(\pm\infty) = \pm \pi/2
 \end{aligned}
\end{equation}
(for details, see Capella \emph{et al.} \cite{CMO07}). For simplicity and without loss of generality, here we have assumed that $\cQ \equiv 1$. We keep such normalization for the rest of the paper.

The magnetization and its phase are, of course, heavily influenced by the presence of an applied external magnetic field. Indeed, in the case where the latter points towards one of the end-states determined by the anisotropy, that is, $\mathbf{h} = H \mathbf{e}_2$, then it can be proved (cf.\cite{CMO07}) that the dynamical equation for the phase $\theta$ is given by
\begin{equation}
 \label{reddyneq}
 \left\{ \ \ 
\begin{aligned}
&\partial_t^2 \theta + \nu \partial_t \theta + \nabla \cE(\theta) = H\cos\theta, \\
&\theta(-\infty,t) =-\pi/2,\quad \theta(\infty,t) =\pi/2,\\
&\theta(x,0) =\theta_0(x) ,\quad \partial_t\theta(x,0) =v_0(x),
\end{aligned}
\right.
\end{equation}
where $\theta_0$ and $v_0$ are some initial conditions, $H \in \R$ is the scalar parameter that measures the external magnetic field strength and  $\cE(\theta)$ is the effective energy appearing in \eqref{varprob}.

Both equations \eqref{varprob} and \eqref{reddyneq} constitute an effective model that describes the dynamics of the magnetization in a ferromagnetic thin film. They are strictly derived from electromagnetic theory and the LLG equation. We remark that, in contrast with the LLG model, equation \eqref{reddyneq} has a second time derivative which is a consequence of the magnetization vector being unitary, which is, in turn, an holonomic constrain. In addition, equation \eqref{reddyneq} not only has a damping term as LLG-equation does, but it also contains a second order spatial derivative emerging form the term $\nabla \cE(\theta)$. In this fashion, Capella \emph{et al.} \cite{CMO07} showed that the in-plane magnetization of a ferromagnetic thin film exhibits some wave-type dynamics and proved the existence of traveling wave solutions (N\'eel wall profiles) under small values of the applied field, $ 0 \leq |H| \ll 1$, including $H = 0$, as we describe below.

\subsection{The static N\'{e}el wall profile in soft magnetic thin films}

In the absence of external magnetic field with $H \equiv 0$, the Cauchy problem \eqref{reddyneq} has a unique odd, monotone increasing and smooth steady state, $\brt = \brt(x)$, such that ${\brt \,}' = {\brt}'(x) \in H^k$ for any $k \in \Z$, $k \geq 0$, known as the \emph{static N\'eel wall's phase profile}. Also, equation \eqref{reddyneq} has a one-parameter group symmetry due to translation invariance. This fact and the positiveness of the damping coefficient $\nu > 0$ imply the orbital stability of the steady state $\brt$, as it has been recently proved in \cite{CMMP23}. Therein, we studied the perturbation equation of \eqref{reddyneq} around $\brt$, by recasting \eqref{reddyneq} with $H = 0$ as an evolution system, namely,
\begin{equation}
\label{NLsyst}
\partial_t \begin{pmatrix}
            u \\ v
           \end{pmatrix} = \begin{pmatrix}0 & \Id \\ - \cL & - \nu \Id\end{pmatrix} \begin{pmatrix}u \\ v\end{pmatrix} + \begin{pmatrix}0 \\ \nabla \cE(\brt + u) - \cL u\end{pmatrix},
\end{equation}
where $\cL:H^1\to L^2$ is the $H^1$-restriction of the $L^2$-gradient of $\nabla \cE$ around the steady state $\brt$. Here the variable $u(\cdot,t) \in H^1$ for all $t > 0$ denotes a small perturbation of the N\'eel wall's phase profile and $v = \partial_t u$. 
A straightforward computation shows that $\nabla \cE(\theta)$ is given in terms of the nonlocal linear operator $(1+(-\Delta)^{1/2}):H^{1}\to L^2$. 
\begin{proposition}\label{lem:cT}
For any $k = 0,1, 2$, the nonlocal linear operator $\cT:=(1+(-\Delta)^{1/2}):H^{k+1}\to H^k$ is bounded.
\end{proposition}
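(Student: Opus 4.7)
The plan is to pass to the Fourier side, where $\cT = 1 + (-\Delta)^{1/2}$ is simply multiplication by the symbol $m(\xi) = 1 + |\xi|$, and to reduce the boundedness claim to an elementary pointwise inequality. Recall that by Plancherel's theorem, for any integer $j \geq 0$ the $H^j$-norm admits the representation
\[
\|u\|_{H^j}^2 = \int_{\R} (1+|\xi|^2)^j \,|\widehat u(\xi)|^2 \, d\xi.
\]
Using the definition of the half-Laplacian given in the Notations section, one has $\widehat{\cT u}(\xi) = (1+|\xi|)\widehat u(\xi)$.

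Fix $k \in \{0,1,2\}$ and take $u \in H^{k+1}$. Then
\[
\|\cT u\|_{H^k}^2 = \int_\R (1+|\xi|^2)^k (1+|\xi|)^2 \,|\widehat u(\xi)|^2 \, d\xi,
\]
so the entire proof reduces to the pointwise estimate $(1+|\xi|)^2 \leq C(1+|\xi|^2)$, uniformly in $\xi \in \R$. This is immediate: expanding yields $(1+|\xi|)^2 = 1 + 2|\xi| + |\xi|^2$, and the AM--GM inequality $2|\xi| \leq 1+|\xi|^2$ (equivalently $(|\xi|-1)^2 \geq 0$) gives $(1+|\xi|)^2 \leq 2(1+|\xi|^2)$. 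Plugging this back in,
\[
\|\cT u\|_{H^k}^2 \leq 2 \int_\R (1+|\xi|^2)^{k+1}\, |\widehat u(\xi)|^2\, d\xi = 2\,\|u\|_{H^{k+1}}^2,
\]
so $\cT : H^{k+1} \to H^k$ is bounded with operator norm at most $\sqrt{2}$.

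There is no genuine obstacle here: the nonlocal nature of $(-\Delta)^{1/2}$, which would be awkward to handle directly in physical space, is precisely what the Fourier transform diagonalizes. I note that the argument is in fact uniform in $k \in \N$, and the constant is independent of $k$; the statement is restricted to $k = 0, 1, 2$ simply because those are the cases invoked later in the paper.
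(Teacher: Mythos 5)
Your proof is correct and follows essentially the same route as the paper's: both pass to the Fourier side, identify $\cT$ with multiplication by $1+|\xi|$, and use the pointwise bound $(1+|\xi|)^2 \leq 2(1+|\xi|^2)$ to obtain $\|\cT u\|_{H^k}^2 \leq 2\|u\|_{H^{k+1}}^2$. Your version merely makes the elementary inequality explicit (via AM--GM) and notes the uniformity in $k$, which is a harmless addition.
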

\begin{proof}
Follows immediately from its definition in terms of the Fourier transform. Indeed, for each $k = 0,1,2$, there holds
\[
\begin{aligned}
\| \cT u \|_{H^k}^2 = \| (1 +(-\Delta)^{1/2}) u \|_{H^k}^2 &= \int_\R (1+|\xi|^2)^k \big| \big( (1 +(-\Delta)^{1/2}) u\big)\,\widehat{}\,(\xi)\big|^2 \, d\xi \\
&\leq 2 \int_\R (1+|\xi|^2)^{k+1} | \widehat{u}(\xi) |^2 \, d\xi = 2 \| u \|_{H^{k+1}}^2,
\end{aligned}
\]
yielding 
\[
\| \cT \| = \sup_{\| u \|_{H^{k+1}} =1} \| \cT u \|_{H^k} < \infty.
\]
\end{proof}
\begin{remark}
\label{remnotation}
In order to simplify the notation, for any real function $\phi = \phi(x)$ depending on the spatial coordinate $x \in \R$ we write $s_\phi = s_\phi(x)$ to denote the function $x\mapsto\sin \phi(x)$, and $c_\phi = c_\phi(x)$ to denote the function $x\mapsto \cos\phi(x)\cT(\cos\phi(x))$. We shall keep this notation for the rest of the paper.
\end{remark}

Therefore, it can be shown (cf. \cite{CMO07,CMMP23}) that the $L^2$-gradient of $\nabla \cE$ around the steady state $\brt$, that is, the linearized operator $\cL$ around the static N\'eel profile acting on the energy space $L^2$, is given by
\begin{equation}
        \label{eq:previousL}
        \left\{
	   \arraycolsep=1pt\def\arraystretch{1.5}
        \begin{array}{l}
        \cL : L^2 \to L^2,\\
        D(\cL) = H^2,\\
        \cL u := - \partial^2_x u +  s_{\brt} \cT (s_{\brt} u) - c_{\brt} u, \qquad u \in D(\cL).
        \end{array}
        \right.
    \end{equation}
    
\begin{remark}
\label{remLinH1}    
It is to be observed that the operator $\cL$ that appears in \eqref{NLsyst} refers to the restriction of $\cL$ to $H^1$, namely $\cL_{|H^1}$, which we write again as $\cL$ with a slight abuse of notation. Nonetheless, its spectral properties and its closedness remain (see Remark 5.2 in \cite{CMMP23}).
\end{remark}

One of the key points proved by Capella \emph{et al.} \cite{CMMP23} is that the solutions to the linear version of \eqref{NLsyst} constitute a $C_0$-semigroup. The following result summarizes the main spectral properties of the linearization around the static N\'eel wall's phase.
\begin{theorem}[Capella \emph{et al.} \cite{CMMP23}]
\label{thm:steady_result}
Let $\nu>0$ be fixed and let $\cA:H^1\times L^2\to H^1\times L^2$, with dense domain $D(\cA) = H^2\times H^1$, be the block operator given by
\begin{equation}\label{eq:A}
\begin{pmatrix}
            u \\ v
           \end{pmatrix} \mapsto \begin{pmatrix}0 & \Id \\ - \cL & - \nu \Id\end{pmatrix} \begin{pmatrix}u \\ v\end{pmatrix}. 
\end{equation}
Then the following properties hold:
\begin{itemize}
	\item[\rm{(a)}] $\lambda = 0$ is a simple and isolated eigenvalue of $\cA$ with eigenvector $\Theta_0 := (\brtp,0) \in D(\cA)$.
   	 \item[\rm{(b)}] There exists $\zeta(\nu)> \tfrac{1}{2}\nu > 0$ such that 
	 \[
	 \sigma(\cA) \subset \{ 0 \} \cup \big\{ \lambda \in \C \, : \, \Re \lambda \leq - \zeta(\nu) < 0 \big\}.
	 \]
    	\item[\rm{(c)}] $\cA$ is the infinitesimal generator of a $C_0$-semigroup $\{e^{t\cA}\}_{t\geq 0}$ of quasicontractions.
    	\item[\rm{(d)}] Let $\Phi_0 := (\nu\brtp,\brtp) \in D(\cA)$. Then the projection operator $\cP$ defined on $H^1 \times L^2$ and given by 
\begin{equation}
\label{defofP}
U\in H^1\times L^2 \mapsto \cP U := U - \frac{\langle U, \Phi_0 \rangle_{L^2 \times L^2}}{\langle \Theta_0, \Phi_0 \rangle_{L^2 \times L^2}}\Theta_0,
\end{equation}
commutes with $\cA$ and satisfies 
\[
\ran(\cA)\subset \ran(\cP)=\{F\in H^1\times L^2 \ |\ \langle F, \Phi_0\rangle_{L^2 \times L^2}=0\}.
\]
\end{itemize}
Moreover, if $(H^1\times L^2)_{\perp}$ denotes the range of $\cP$, 
\begin{equation}
\label{ranofP}
(H^1\times L^2)_{\perp}:= \ran(\cP), 
\end{equation}
then the restriction of the operator $\cA$ on $(H^1\times L^2)_{\perp}$, denoted as $\cA_\perp := \cA_{|(H^1\times L^2)_{\perp}}$ and with dense domain $D(\cA_\perp) = (H^2\times H^1)\cap (H^1\times L^2)_{\perp}$, is the infinitesimal generator of an exponentially decaying $C_0$-semigroup $\{e^{t\cA_\perp}\}_{t\geq 0}$ and its spectrum is stable with a spectral gap,
\begin{equation}
\label{staticstable}
\sigma(\cA_\perp) \subset \big\{ \lambda \in \C \, : \, \Re \lambda \leq - \zeta(\nu) < 0 \big\}.
\end{equation}
\end{theorem}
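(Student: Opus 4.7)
The plan is to reduce the spectral analysis of the block operator $\cA$ to the analysis of the scalar self-adjoint operator $\cL$ via the quadratic relation induced by the block structure, and then import the spectral properties of $\cL$ established in the companion paper \cite{CMMP23}. Concretely, for $\lambda\in\C$ and $U=(u,v)\in D(\cA)$, the equation $\cA U=\lambda U$ is equivalent to $v=\lambda u$ together with $(\cL+\lambda^2+\nu\lambda)u=0$, and an analogous reduction holds for the resolvent equation $(\cA-\lambda)U=F$. This yields the identity $\sigma(\cA)=\{\lambda\in\C\,:\,-\lambda(\lambda+\nu)\in\sigma(\cL)\}$, with eigenvectors of the form $(u,\lambda u)$ corresponding to eigenfunctions $u$ of $\cL$ at the eigenvalue $\mu=-\lambda(\lambda+\nu)$.

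For part (a), I would use that $\cL\brtp=0$ by translation invariance of $\cE$, so $\lambda=0$ is an eigenvalue of $\cA$ with eigenvector $\Theta_0=(\brtp,0)$. Simplicity of $\lambda=0$ in $\sigma(\cA)$ follows from simplicity of $\mu=0$ in $\sigma(\cL)$ (proved in \cite{CMMP23} via a Perron-type argument using positivity of $\brtp$), because any $U\in\Ker(\cA)$ forces $v=0$ and $u\in\lspan\{\brtp\}$. For part (b), the gap $\sigma(\cL)\subset\{0\}\cup[\mu_\ast,\infty)$ with $\mu_\ast>0$, combined with the quadratic reduction, gives $\sigma(\cA)\subset\{0\}\cup\{\lambda:\Re\lambda\leq-\zeta(\nu)\}$, where $\zeta(\nu)$ is obtained explicitly by maximizing the real part of the roots of $\lambda^2+\nu\lambda+\mu=0$ over $\mu\in[\mu_\ast,\infty)$; isolation of $\lambda=0$ in $\sigma(\cA)$ is inherited from isolation of $\mu=0$ in $\sigma(\cL)$ through the continuity of the two branches of the inverse map.

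For part (c), I would verify the Lumer--Phillips hypotheses for $\cA$ on $H^1\times L^2$ equipped with an equivalent inner product adapted to $\cL$ (roughly $\langle\cL u,u\rangle_{L^2}+\langle u,u\rangle_{L^2}+\langle v,v\rangle_{L^2}$, suitably modified to absorb the zero mode), so that $\cA-\omega\Id$ is dissipative for some $\omega>0$; the range condition then follows from the resolvent reduction of step (b). For part (d), a short computation gives $\cA^*V=(-\cL V_2,\,V_1-\nu V_2)$ under the $L^2\times L^2$ pairing, so that $\Phi_0=(\nu\brtp,\brtp)$ spans $\Ker(\cA^*)$; the projection $\cP$ defined in \eqref{defofP} is then precisely the Riesz spectral projection complementary to $\lspan\{\Theta_0\}$, hence commutes with $\cA$ and satisfies $\ran(\cA)\subset\ran(\cP)$. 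The final exponential decay statement for $\{e^{t\cA_\perp}\}_{t\geq 0}$ follows from the spectral gap \eqref{staticstable} on the invariant subspace together with the Gearhart--Pr\"uss theorem in the Hilbert space setting.

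The main obstacle is contained entirely in the scalar analysis of $\cL$ that feeds steps (a) and (b): because $\cL$ is nonlocal through the term $s_{\brt}\cT(s_{\brt}\,\cdot\,)$, standard Schr\"odinger-operator techniques do not directly locate the essential spectrum, and establishing that $\mu=0$ is simple and isolated with a positive gap $\mu_\ast$ requires the Fourier-multiplier and relative-compactness arguments carried out in \cite{CMMP23}. Once those are granted, the block-operator statements above reduce to algebraic manipulations on the quadratic $\lambda\leftrightarrow\mu$ correspondence and a standard semigroup-generation argument.
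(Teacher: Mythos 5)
You should first note that this paper never proves Theorem \ref{thm:steady_result}: it is quoted from the companion work \cite{CMMP23}, and the strategy described there (spectral analysis of the nonlocal scalar operator $\cL$, passage to the block operator, Gearhart--Pr\"uss for the decay) is essentially the route you outline, with the hard scalar ingredients rightly treated as imported. At that level your reduction $\sigma(\cA)=\{\lambda:\,-\lambda(\lambda+\nu)\in\sigma(\cL)\}$, the Lumer--Phillips argument with an equivalent inner product built from the form $a[\cdot,\cdot]$, and the identification of the rank-one projection are all in the right spirit. There are, however, several concrete gaps in the execution.

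First, in (a) you only establish \emph{geometric} simplicity: $\Ker(\cA)=\lspan\{\Theta_0\}$. Simplicity in the sense used here (and needed for the rank-one projection in (d)) also requires that there be no Jordan block, i.e. $\Theta_0\notin\ran(\cA)$; this amounts to showing $\cL w=-\nu\brtp$ is unsolvable, which follows from self-adjointness of $\cL$ and $\langle \brtp,\brtp\rangle_{L^2}\neq 0$, a step you do not mention. Second, in (d) you compute the adjoint with respect to the $L^2\times L^2$ pairing, but $\cA$ acts on $H^1\times L^2$; as the remark following the theorem explicitly warns, the $H^1\times L^2$-adjoint null vector is $\check{\Phi}_0=(\nu(1+(-\Delta))^{-1}\brtp,\brtp)$, not $\Phi_0$, and formula \eqref{defofP} is the spectral projection only because $\langle U,\check{\Phi}_0\rangle_{H^1\times L^2}=\langle U,\Phi_0\rangle_{L^2\times L^2}$; without this identity (or a direct check that $\cP$ is bounded on $H^1\times L^2$, commutes with $\cA$, and that $\langle\cA U,\Phi_0\rangle_{L^2\times L^2}=0$ for $U\in D(\cA)$), your ``Riesz spectral projection'' identification is not justified. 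Third, in (b) maximizing the real part of the roots of $\lambda^2+\nu\lambda+\mu=0$ over $\mu\geq\mu_*$ yields a decay rate no better than $\nu/2$ (the complex branch has real part exactly $-\nu/2$, and the real branch gives $(\nu-\sqrt{\nu^2-4\mu_*})/2<\nu/2$ when $\mu_*<\nu^2/4$); since $\sigma(\cL)$ is unbounded above, your own correspondence places spectrum of $\cA$ on the line $\Re\lambda=-\nu/2$, so your argument cannot deliver the constant $\zeta(\nu)>\tfrac12\nu$ as stated, only $0<\zeta(\nu)\leq\nu/2$ -- you should either flag this discrepancy or track the precise constant of \cite{CMMP23}. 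Finally, the exponential decay of $\{e^{t\cA_\perp}\}_{t\geq0}$ does not follow from the spectral gap ``together with Gearhart--Pr\"uss'' as a one-liner: Gearhart--Pr\"uss requires a \emph{uniform} bound on $\|(\lambda-\cA_\perp)^{-1}\|$ on a right half-plane, which in infinite dimensions is not implied by the spectral location and is precisely the kind of resolvent estimate that \cite{CMMP23} (and Section \ref{secresests} here, for the moving wall) must establish by hand; your sketch omits this step entirely.
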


\begin{remark}
A few observations about the projector $\cP$ are in order. The reader might have already noticed that the inner product appearing in formula \eqref{defofP} is not the standard inner product of the base space $H^1 \times L^2$. The operator $\cP$ is, however, the standard projection of $H^1 \times L^2$ onto the eigenspace $\text{span} \, \{ \Theta_0 \}$. This holds because, as stated in \cite{CMMP23}, the eigenfunction associated to the formal adjoint $\cA^*$ is not $\Phi_0 = (\nu\brtp,\brtp)$ but $\check{\Phi}_0 = (\nu (1 + (-\Delta))^{-1} \brtp, \brtp) \in H^1 \times L^2$ and it can be proved that
\[
\langle U, \check{\Phi}_0 \rangle_{H^1 \times L^2} = \langle U, \Phi_0 \rangle_{L^2 \times L^2},
\]
for all $U \in H^1 \times L^2$ (see Corollary 6.9 and Lemma 6.10 in \cite{CMMP23}). Hence we obtain formula \eqref{defofP}, which provides a simpler characterization of $\ran(\cP)$ and which was crucial to deduce the spectral stability of the static N\'eel wall, via a direct spectral analysis of the operator $\cA$. Compare \eqref{defofP} with the standard expression of the corresponding projector onto the eigenspace generated by the \emph{moving} N\'eel wall, formula \eqref{eq:projector} below. In the moving case we do not pursue a simpler expression because the spectral stability of the linearized operator about a moving wall is analyzed indirectly (upon application of standard perturbation theory of linear operators).
\end{remark}

In the companion paper \cite{CMMP23}, we were able to prove the nonlinear (orbital) stability of the static N\'eel wall under small perturbations. We used the spectral information of Theorem \ref{thm:steady_result} in a key way. We shall profit from this result to examine the case of \emph{moving} N\'eel walls, which are related structures that appear when the external magnetic field is switched on.

\subsection{The moving N\'eel wall and statement of the main result}

Now let us consider the case when an external magnetic field is applied. By assuming the existence of traveling wave solutions to equation \eqref{reddyneq} of the form $\theta(x,t) = \psi(x-ct)$ for $c\in \R$,  a direct computation shows that the the traveling profile $\psi$ must satisfy 
\begin{equation}\label{travelling_profile}
\begin{aligned}
&c^2\psi'' - \nu c\psi' +\nabla \cE(\psi) = H\cos \psi,\\
&\psi(-\infty) =-\pi/2,\quad \psi(\infty) =\pi/2,\\
\end{aligned}
\end{equation}
where $' = d/dz$ denotes differentiation with respect to the moving space variable of translation, $z = x - ct$. We refer to the constant $c\in \R$ as the {\em propagation} speed. Upon an application of the implicit function theorem, the authors of \cite{CMO07} proved the existence of a moving N\'eel wall's phase for sufficiently small values of the applied magnetic field.
\begin{theorem}[Capella \emph{et al.} \cite{CMO07}]
\label{thm:CMO07}
There exists $\widetilde{\delta}>0$ such that if the magnetic field strength satisfies $|H|<\widetilde{\delta}$, then there exists a traveling wave $\psi\in H^2_{\rm{loc}}(\R)$ and a propagation speed $c=c(H)$ for the reduced LLG dynamics \eqref{travelling_profile} such that $\psi-\brt\in H^2(\R)$. Moreover, the propagation speed has an expansion 
\begin{equation}\label{eq:mobility}
c = \tbet H + o(H),
\end{equation}
where the \emph{wall mobility} $\tbet > 0$ is defined as $\tbet := (M\nu)^{-1}$ with $M = \frac{1}{2}\|\brtp\|^2_{L^2} > 0$.
\end{theorem}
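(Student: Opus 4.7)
The plan is to invoke the implicit function theorem (IFT) on the traveling-wave equation \eqref{travelling_profile}, with $H$ as the small parameter and with the shape perturbation $w := \psi - \brt$ and the propagation speed $c$ \emph{both} treated as unknowns, starting from the trivial solution $(w,c,H)=(0,0,0)$, which corresponds to $\psi = \brt$. The standard obstruction—the non-invertibility of the Hessian $\cL$ from \eqref{eq:previousL}, whose kernel is $\mathrm{span}(\brtp)$ by translation invariance of the static variational problem \eqref{varprob}—is overcome precisely because $c$ contributes an extra degree of freedom that couples to the kernel direction through the damping term $-\nu c\,\psi'$.

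Define $V := \{w \in H^2(\R) : \langle w,\brtp\rangle_{L^2}=0\}$, the hyperplane transverse to the translation zero-mode, and set
\begin{equation*}
F : V\times\R\times\R \to L^2(\R), \qquad F(w,c,H) := c^2(\brt+w)'' - \nu c (\brt+w)' + \nabla\cE(\brt+w) - H\cos(\brt+w).
\end{equation*}
Smoothness of $F$ in $(w,c,H)$ is routine, using Proposition \ref{lem:cT} for $\cT$ and the Lipschitz/Moser properties of $w \mapsto \sin(\brt+w),\cos(\brt+w)$ as Nemytskii maps on $H^2$; and $F(0,0,0)=\nabla\cE(\brt)=0$. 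The partials at $(0,0,0)$ are $\partial_w F = \cL$, $\partial_c F = -\nu\brtp$, $\partial_H F = -\cos\brt$. The central step is to verify that the combined linearization
\begin{equation*}
\Lambda := D_{(w,c)} F(0,0,0) : V\times\R \to L^2(\R), \qquad (\dot{w},\dot{c}) \mapsto \cL\dot{w} - \nu\dot{c}\,\brtp,
\end{equation*}
is a bounded isomorphism of Banach spaces. Injectivity: if $\cL\dot{w} = \nu\dot{c}\,\brtp$, pair with $\brtp$ in $L^2$ and use self-adjointness of $\cL$ together with $\cL\brtp=0$ to get $\dot{c}=0$; hence $\dot{w}\in\ker\cL\cap V = \{0\}$. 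Surjectivity: by the Fredholm alternative for the self-adjoint $\cL$ (whose kernel is simple and isolated, as follows from Theorem \ref{thm:steady_result} and the block structure of $\cA$), $\ran\cL = \brtp^{\perp}\subset L^2$; given $f\in L^2$ choose $\dot{c} = -\langle f,\brtp\rangle_{L^2}/(\nu\|\brtp\|_{L^2}^2)$ so that $f+\nu\dot{c}\,\brtp\perp\brtp$, solve $\cL\dot{w}=f+\nu\dot{c}\,\brtp$, and adjust by an element of $\ker\cL$ to land in $V$.

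The IFT now yields $\widetilde{\delta}>0$ and $C^1$ branches $H\mapsto w(H)\in V$, $H\mapsto c(H)\in\R$ with $w(0)=0$, $c(0)=0$, and $F(w(H),c(H),H)\equiv 0$ on $(-\widetilde{\delta},\widetilde{\delta})$; setting $\psi := \brt + w(H)$ produces the desired profile, with $\psi-\brt\in H^2(\R)$ and consequently $\psi\in H^2_{\mathrm{loc}}(\R)$. For the mobility expansion \eqref{eq:mobility}, differentiate the identity $F(w(H),c(H),H)=0$ at $H=0$ to obtain $\cL w'(0) - \nu c'(0)\,\brtp = \cos\brt$, and pair with $\brtp$; self-adjointness of $\cL$ annihilates the $w'(0)$ contribution, leaving
\begin{equation*}
-\nu c'(0)\,\|\brtp\|_{L^2}^2 = \langle \cos\brt,\brtp\rangle_{L^2} = \bigl[\sin\brt\bigr]_{-\infty}^{\infty} = 2,
\end{equation*}
so that $|c'(0)| = 2/(\nu\|\brtp\|_{L^2}^2) = (M\nu)^{-1} = \tbet$, matching \eqref{eq:mobility} up to the sign convention on $H$.

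The principal obstacle is, as anticipated, the degeneracy of $\cL$: a direct IFT in $w$ alone fails because $\cL$ is not boundedly invertible on $L^2$. The mechanism that unlocks the argument is the symmetry-breaking role of the forcing $H\cos\psi$, which pairs nontrivially against the zero mode $\brtp$ (this is the content of the key identity $\langle \cos\brt,\brtp\rangle_{L^2}=2$) and forces the selection of a unique wall speed $c=c(H)$; at the linearized level, this is exactly what makes $\Lambda$ an isomorphism and what quantitatively produces the mobility law.
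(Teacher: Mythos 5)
Your proposal is correct and follows essentially the same route as the proof the paper relies on: Theorem \ref{thm:CMO07} is not reproved here but cited from Theorem 2 of \cite{CMO07}, whose argument is exactly the implicit function theorem applied jointly in $(\psi-\brt,c)$ around $(\brt,0)$, with the speed variable compensating the translation kernel of $\cL$ (cf.\ Remark \ref{rmk:1}); your verification that $(\dot w,\dot c)\mapsto \cL\dot w-\nu\dot c\,\brtp$ is an isomorphism of $V\times\R$ onto $L^2$ rests precisely on the spectral facts recorded in Theorem \ref{prev_main_res_L} (simple, isolated zero eigenvalue, hence closed range $\{\brtp\}^{\perp}$), and the pairing $\langle\cos\brt,\brtp\rangle_{L^2}=2$ gives the correct mobility magnitude $\tbet=2/(\nu\|\brtp\|_{L^2}^2)=(M\nu)^{-1}$. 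The one blemish is the sign: taking \eqref{travelling_profile} literally, with $\cL$ the linearization of $\nabla\cE$ as in \eqref{eq:previousL}, your computation yields $c'(0)=-\tbet$ rather than $+\tbet$ as asserted in \eqref{eq:mobility}; this traces to a sign convention for $\nabla\cE$ (note that \eqref{eq:bootstraping} and \eqref{eq:previousL} are themselves mutually inconsistent in the sign of the nonlocal term), so flagging it as a convention issue, as you do, is reasonable, though in a polished write-up you should fix one convention and carry the sign through explicitly.
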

\begin{proof}
See the proof of Theorem 2 in \cite{CMO07}.
\end{proof}
\begin{remark}\label{rmk:1}
From the implicit function theorem we know that there exists an open neighborhood $\Omega_1\subset \R$ containing $H=0$ and another open neighborhood $\Omega_2\subset H^2(\R)\times \R$ of $(\brt,0)$ such that the mapping $H\mapsto (\psi(H),c(H))$ is differentiable and satisfies equations \eqref{travelling_profile} and $(\psi(0),c(0))=(\brt,0)$. Due to differentiability of the mapping, there exist a positive constant $\gamma$ such that 
    \begin{equation}
    \label{estpsimthe}
    \| \psi(H) - \brt \|_{H^1} + |c(H)| = H\gamma +o(H),
    \end{equation}
    for $H$ small enough (for further details, see \cite{CMO07}).
\end{remark}

\begin{remark}
\label{remP}
Once the intensity of the external magnetic field is fixed and sufficiently small, $|H| < \widetilde{\delta}$, then the propagation speed of the moving N\'eel wall's phase, $c = c(H)$ is also fixed and of order $c = O(|H|)$ as $H \to 0$. Since the wall mobility is not zero, $\widetilde{\beta} > 0$, from the implicit function theorem we know that there is a one-to-one correspondence between the parameter values $H$ and $c$. Therefore, we may use the value of $c$ as a perturbation parameter for the physical problem, instead of $H$, inasmuch as $H = O(|c|)$ as well. In the sequel we use (according to custom the nonlinear wave stability theory \cite{KaPro13}) the parameter $c$ as the perturbation parameter of the spectral problem. Also note that, from estimate \eqref{estpsimthe} we also have
\begin{equation}
\label{H1orderc}
\| \psi - \brt \|_{H^1} = O(|c|) = O(|H|).
\end{equation}
\end{remark}

We are now ready to state the main result of this paper.

\begin{theorem}[nonlinear stability of moving N\'eel walls]
\label{maintheorem}
For any given $\nu > 0$ fix $\delta \in (0, \zeta(\nu))$, where $\zeta(\nu)$ is the decay rate bound from Theorem \ref{thm:steady_result}. Let $\mathcal{J}_H \subset H^1(\R) \times L^2(\R)$ be the set of (real) initial conditions such that the Cauchy problem \eqref{reddyneq} has a global solution for a given value of the magnetic field strength, $H \in \R$. Suppose that the external magnetic field satisfies $|H| < \widetilde{\delta}$ such that $\psi = \psi(\cdot)$ is the moving N\'eel wall phase's profile from Theorem \ref{thm:CMO07} and traveling with speed $c = O(|H|)$. Then there exists some $\varep \in (0, \widetilde{\delta})$ sufficiently small such that if the initial data $(\theta_0, v_0)$ and the magnetic field strength satisfy
\[
|H| + \| \theta_0 - \psi \|_{H^1} + \| v_0 \|_{L^2} < \varep,
\]
then the solution to \eqref{reddyneq} with initial conditions $(\theta(x,0), \partial_t \theta(x,0)) = (\theta_0, v_0) \in \mathcal{J}_H$ satisfies
\[
\| \theta(\cdot, t) - \psi( \cdot + s) \|_{H^1} \leq C \exp( - \omega t),
\]
for all $t > 0$, where $s \in \R$ denotes some translation of the profile and $C, \omega > 0$ are constants depending on $H$ and $\varep$.
\end{theorem}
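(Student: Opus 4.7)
The strategy mirrors the spectral-stability-implies-nonlinear-stability methodology used in \cite{CMMP23} for the static wall. The plan is to move into the Galilean frame $z = x - ct$, introduce a translation modulation $s(t)$ to absorb the zero eigenvalue of $\cA_c$, and close the estimate via Duhamel's formula using the exponentially decaying semigroup generated by $\cA_c$ on the stable complementary subspace (whose existence is established in Section \ref{secsemiggen}).

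Concretely, write the solution as $\theta(x,t) = \psi(x - ct - s(t)) + u(x - ct - s(t), t)$ and set $v = \partial_t u$. Substituting into \eqref{reddyneq} and using \eqref{travelling_profile} produces an abstract evolution equation on $H^1(\R) \times L^2(\R)$ of the form
\[
\partial_t U = \cA_c U + s'(t) \bigl(\Theta_c + \partial_z U \bigr) + \cN(U),
\]
where $U = (u,v)^T$, $\Theta_c = (\psi', 0)^T$ is the zero-eigenvector of $\cA_c$, and $\cN(U)$ collects terms that are at least quadratic in $U$, arising from the Taylor expansion of $\nabla \cE(\psi + u) - \nabla \cE(\psi)$ and of $H(\cos(\psi + u) - \cos \psi)$ around $\psi$. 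By Theorems \ref{lem:spectral_gap} and \ref{theospectralstab}, for $|c|$ sufficiently small $\cA_c$ admits a spectral decomposition with a simple isolated eigenvalue at $0$ and a gap $\zeta_c > \delta > 0$; let $\cP_c$ denote the spectral projection onto the stable complementary subspace and $\Phi_c$ the associated left eigenvector.

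Choose $s(t)$ implicitly by requiring $\langle U(t), \Phi_c \rangle = 0$ for all $t \geq 0$; the implicit function theorem fixes $s(0)$ from the initial data, and differentiating the orthogonality condition in time yields a nonlinear ODE for $s'(t)$ whose denominator is bounded away from zero while $\|U\|_{H^1 \times L^2}$ remains small. With $U$ confined to $\ran \cP_c$, Duhamel's formula combined with the exponential bound $\|e^{t\cA_c}\cP_c\| \leq C e^{-\delta t}$ and the quadratic estimate $\|\cN(U)\|_{L^2} \lesssim (\|U\|_{H^1 \times L^2} + |H|)\,\|U\|_{H^1 \times L^2}$ produces, through a standard bootstrap argument in the spirit of Lattanzio \emph{et al.} \cite{LMPS16}, the exponential $H^1 \times L^2$ decay of $U$ and the convergence of $s(t)$ to a finite limit $s_\infty$. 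This suffices to establish the claim with the translation $s = s_\infty$.

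The main technical obstacle is verifying the quadratic bound on $\cN(U)$; the most delicate piece is the non-local contribution $\cT\bigl(\cos(\psi + u) - \cos\psi + \sin\psi \cdot u\bigr)$, for which one combines the boundedness $\cT : H^1 \to L^2$ (Proposition \ref{lem:cT}), pointwise estimates for $\cos\psi$ and $\sin\psi$ analogous to those used for $\brt$ in \cite{CMMP23}, and the Sobolev embedding $H^1(\R) \hookrightarrow L^\infty(\R)$ to control the quadratic remainder of the cosine. Once this is in place, the rest of the argument is standard and parallels the static case treated in detail in the companion paper, which is why we expect many details can be glossed over.
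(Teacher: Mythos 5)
Your overall strategy (spectral gap plus semigroup decay plus translation modulation plus a quadratic estimate on the nonlinearity) is in the right spirit, but the specific scheme you write down has a genuine gap coming from the second-order-in-time, wave-type structure of \eqref{reddyneq}. If you insert the ansatz $\theta(x,t)=\psi(x-ct-s(t))+u(x-ct-s(t),t)$ into \eqref{reddyneq}, the term $\partial_t^2\theta$ generates contributions proportional to $s''(t)$ and $(s'(t))^2$ as well as mixed terms $s'(t)\partial_{z}v$ and $s'(t)^2\partial_z^2 u$; none of these appear in your claimed evolution equation $\partial_t U=\cA_c U+s'(t)(\Theta_c+\partial_z U)+\cN(U)$. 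Two consequences: (i) differentiating the orthogonality condition $\langle U,\Phi_c\rangle=0$ does not give a first-order ODE for $s'$ but a relation involving $s''$, so you must also prescribe and control $s'(0)$ and prove decay of $s'$; and (ii) the modulation terms $s'\partial_z U$ (and $s'^2\partial_z^2u$, $s'\partial_z v$) are \emph{unbounded} on $H^1\times L^2$ --- treating them as Duhamel forcings loses a derivative, so the ``standard bootstrap'' in $H^1\times L^2$ does not close as described without an additional idea (extra regularity of $U$, a time-dependent generator, or undoing the shift exactly via the translation group). Also, as stated, your bound $\|\cN(U)\|_{L^2}\lesssim(\|U\|+|H|)\|U\|$ suggests a linear-in-$U$ piece; since the linearization $\cL_c$ in \eqref{eq:L} already contains $Hs_\psi\,\Id$, the remainder is genuinely quadratic, $\|\cN(U)\|\leq C(H)\|U\|_{H^1\times L^2}^2$, and this sharper form is what one needs.

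The paper's proof avoids the dynamic modulation entirely: it passes to the fixed Galilean frame \eqref{eq:movil}, writes the problem as $\partial_t W=F(W)$ with $F$ as in \eqref{eq:F}, and applies the abstract result of Lattanzio \emph{et al.} \cite{LMPS16} (Theorem \ref{LPstability}, a Hilbert-space version of Sattinger \cite{Sat76}), in which the translation $s$ is a \emph{fixed} parameter determined a posteriori, so no $s'(t)$, $s''(t)$ or derivative-losing terms ever arise. What remains is to verify the three hypotheses: (A$_1$) uniform exponential decay of the projected linear semigroups for the isospectral family $\cA_c^s$ obtained by translating the profile, which follows from Theorems \ref{lem:spectral_gap}, \ref{theospectralstab} and Lemma \ref{elcoro}, with rate independent of $s$; (A$_2$) the quadratic Taylor bound $\|\phi(s)-\phi(0)-\phi'(0)s\|_{H^1\times L^2}\leq Cs^2$, proved via the integral form of the Taylor remainder and the smoothness of $\psi$ from Lemma \ref{lem:reg}; and (A$_3$) the quadratic bound on $F(\phi(s)+W)-F(\phi(s))-DF(\phi(s))W$, where the non-local energy part is estimated word-by-word as in the static case \cite{CMMP23} (since $\psi$ is as smooth as $\brt$) and the only new piece is $H(\cos(\psi_s+w_1)-\cos\psi_s+\sin\psi_s\,w_1)$, controlled by $\tfrac12|H|\,\nhu{w_1}^2$ via the pointwise cosine remainder and Sobolev embedding --- essentially the estimate you anticipated. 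If you want to keep your hands-on modulation route, you would either have to place the perturbation in the fixed frame $z=x-ct$ (so only $\psi$-derivatives multiply $s'$, $s''$) and run a genuinely second-order modulation analysis, or simply invoke Theorem \ref{LPstability} as the paper does.
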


\begin{remark}
\label{remstab}
Just as in our previous analysis \cite{CMMP23}, Theorem \ref{maintheorem} focuses on the nonlinear stability property of the profile $\psi$: eventual small perturbations of the moving N\'eel profile, if they exist, do decay exponentially fast to a translated profile, $\psi( \cdot + s)$. The global existence of such perturbations can be proved with standard semigroup methods and using the decay estimates that we establish in this work, but we do not pursue such analysis here.
\end{remark}

\section{Preliminaries}
\label{secprel}

In this Section we gather some preliminary information which will be used later. In particular, we highlight some important features of the static N\'eel wall and its relation to the moving wall's phase.

\subsection{Properties of the static N\'eel wall's phase}

The following definitions and results can be found in the aforementioned references \cite{CMO07,CMMP23}, which the reader can consult for further information.

\begin{theorem}[Capella \emph{et al.} \cite{CMO07,CMMP23}]
\label{prev_main_res_L}
Let $\brt$ be the static N\'eel wall's phase and $\cL:L^2\to L^2$ (with domain $D(\cL)=H^2$) be the $L^2$-gradient of $\nabla \cE$ around $\brt$ defined in \eqref{eq:previousL}. Then, $\cL$ is a closed, densely defined, self-adjoint linear operator. Moreover, there exists a fixed positive constant $\Lambda_0$ such that the $L^2$-spectrum $\sigma(\cL)$ of $\cL$ satisfies  \[
\sigma(\cL)\subset \{0\}\cup [\Lambda_0,\infty).
\]
In addition,  $\lambda=0$ is a simple isolated eigenvalue with eigenspace spanned by $\brtp \in D(\cL)$.
\end{theorem}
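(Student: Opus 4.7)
The plan is to decompose $\cL$ into a self-adjoint unperturbed part plus a symmetric, $L_0$-bounded perturbation, and then extract the spectral information from variational and symmetry considerations together with a Weyl-type localization of the essential spectrum.

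First, I would write $\cL = L_0 + V$ with $L_0 = -\partial_x^2$ (self-adjoint on $H^2 \subset L^2$) and $V u = s_{\brt}\,\cT(s_{\brt} u) - c_{\brt} u$. Since $\brtp \in H^k$ for every $k \geq 0$, both $s_{\brt}$ and $c_{\brt}$ are smooth and bounded with bounded derivatives, so multiplications by $s_{\brt}$ and $c_{\brt}$ preserve $H^k$. Combined with Proposition \ref{lem:cT}, which yields $\cT : H^1 \to L^2$ boundedness, we obtain $V : H^1 \to L^2$, and interpolation between $L^2$ and $H^2$ gives that $V$ is $L_0$-bounded with relative bound zero. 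Symmetry on $H^2$ follows because $\cT$ is the Fourier multiplier by the real symbol $1+|\xi|$, hence self-adjoint, and multiplications by the real-valued $s_{\brt}$, $c_{\brt}$ are symmetric. The Kato-Rellich theorem then yields self-adjointness of $\cL$ on $D(\cL) = H^2$.

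For the non-negativity and the kernel, I would use that $\brt$ minimizes the variational problem \eqref{varprob}; thus the second variation $\langle \cL u, u \rangle_{L^2} \geq 0$ for every smooth compactly supported test function $u$, and a density argument extends this to all $u \in D(\cL)$. Therefore $\sigma(\cL) \subset [0,\infty)$. Differentiating the Euler-Lagrange equation for $\brt$ in the (invariant) translation direction yields $\cL \brtp = 0$, so $\brtp \in \ker\cL$.

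The isolation of $\lambda = 0$ and the spectral gap would come from locating the essential spectrum. Since $\brt(\pm\infty) = \pm \pi/2$, we have $s_{\brt} \to \pm 1$ and $c_{\brt} \to 0$ at infinity, so the coefficients decouple to the constant-coefficient Fourier-multiplier operator $\cL_\infty = -\partial_x^2 + \cT = -\partial_x^2 + 1 + (-\Delta)^{1/2}$, with symbol $\xi^2 + |\xi| + 1 \geq 1$, hence $\sigma(\cL_\infty) = [1,\infty)$. A Weyl singular-sequence / Persson argument together with a relative-compactness lemma for $\cL - \cL_\infty$ would give $\ess(\cL) \subset [1,\infty)$. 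This is where the main obstacle lies: because $\cT$ is nonlocal, the standard argument based on multiplication-operator compactness does not apply; in \cite{CMMP23} this is overcome by establishing $L^2$-equicontinuity of the Fourier symbols associated with the difference, which provides the required relative compactness.

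Once $\ess(\cL) \subset [1,\infty)$ is known, any spectrum of $\cL$ in $[0,1)$ consists of isolated eigenvalues of finite multiplicity. Simplicity of $\lambda = 0$ would follow from the fact that $\brtp > 0$ on $\R$ (since $\brt$ is strictly increasing), so $\brtp$ is the unique, up to scaling, ground state of $\cL$; a Perron-Frobenius type argument adapted to this nonlocal Schr\"odinger-like operator, or alternatively the strict convexity of $E_0$ in $m_1 = \cos\theta$ observed after \eqref{limit-varppio}, rules out higher multiplicity. Setting $\Lambda_0$ to be the distance from $0$ to the remainder $\sigma(\cL) \setminus \{0\}$, which is positive by isolation, yields the desired inclusion $\sigma(\cL) \subset \{0\} \cup [\Lambda_0, \infty)$.
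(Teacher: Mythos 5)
Your outline is largely consistent with what actually lies behind this statement; note first that the paper itself offers no proof at all, only a pointer to Lemmata 4.9--4.10, Proposition 4.6 and Theorem 4.1 of \cite{CMMP23}, so the comparison has to be with the route of \cite{CMO07,CMMP23} as reflected in what the present paper imports. Your decomposition $\cL=-\partial_x^2+V$ with $V u=s_{\brt}\cT(s_{\brt}u)-c_{\brt}u$ symmetric and infinitesimally $-\partial_x^2$-bounded (hence Kato--Rellich self-adjointness on $H^2$), the nonnegativity of the second variation at the minimizer, the identity $\cL\brtp=0$ from translation invariance, and the localization $\sigma_{\mathrm{ess}}(\cL)\subset[1,\infty)$ via the limiting symbol $\xi^2+|\xi|+1$ together with the relative-compactness step (which you correctly attribute to the $L^2$-equicontinuity argument of \cite{CMMP23}) are all sound.

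The genuine weak point is the simplicity of $\lambda=0$. Your primary suggestion, a Perron--Frobenius/ground-state argument from $\brtp>0$, is not available off the shelf here: $\cL$ is not a Schr\"odinger operator with a local potential, and the nonlocal term $s_{\brt}\cT(s_{\brt}\cdot)$ has, through the singular-integral representation of $(-\Delta)^{1/2}$, an off-diagonal kernel proportional to $-s_{\brt}(x)s_{\brt}(y)/(x-y)^2$, which changes sign because $s_{\brt}=\sin\brt$ does; consequently neither positivity preservation of $e^{-t\cL}$ (Beurling--Deny type criteria) nor Sturm oscillation theory can simply be invoked, and ``positive eigenfunction implies simple ground state'' does not follow as stated. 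The argument that actually works, and the one this paper itself imports as Lemma \ref{lem:sesquiform} (citing Proposition 4.6 of \cite{CMMP23}), is the convexity route you mention only as an alternative: strict convexity of the energy in $m_1=\cos\theta$ yields coercivity of the quadratic form, $a[u,u]=\langle\cL u,u\rangle_{L^2}\geq \Lambda_0\|u\|_{L^2}^2$ for $u\perp\brtp$, and since $(\brtp)^{\perp}$ is $\cL$-invariant by self-adjointness, this single estimate gives at once $\ker\cL=\mathrm{span}\{\brtp\}$, the isolation and simplicity of $\lambda=0$, and the inclusion $\sigma(\cL)\setminus\{0\}\subset[\Lambda_0,\infty)$; in particular the Weyl essential-spectrum computation, while correct, is not the load-bearing step for this particular statement. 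As written, then, the simplicity step is a gap; fleshing out the coercivity argument closes it.
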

\begin{proof}
See Lemmata 4.9 and 4.10 as well as Proposition 4.6 and Theorem 4.1 in Capella \emph{et al.} \cite{CMMP23}.
\end{proof}
\begin{definition}\label{def:Hk}
Let $L^2_\perp$ denote the $L^2$-orthogonal complement of $\brtp$. For every $k=0,1,2$, we define the following function spaces:
\[
H^k_\perp :=H^k\cap L^2_\perp.
\] 
\end{definition}

\begin{remark}
From Theorem \ref{prev_main_res_L} we know that $\lambda = 0$ is a simple eigenvalue of $\cL$ with $\ker \cL = \mathrm{span} \{ \brtp\}$. Therefore, $\cL$ is not invertible. However, when restricted to $L^2_\perp$, the operator $\cL_\perp := \cL_{|L_\perp^2} : L_\perp^2 \to L_\perp^2$ with domain $D(\cL_\perp) = H^2_\perp$, is indeed invertible because $\lambda = 0$ belongs to the resolvent of $\cL_\perp$. Hence, the inverse $\cL_\perp^{-1} : L_\perp^2 \to L_\perp^2$, with domain $D(\cL_\perp^{-1}) = L_\perp^{2}$ and range $\ran(\cL_\perp^{-1}) = H_\perp^2$, is well defined.
\end{remark}

Since $H^k$-convergence is stronger that $L^2$-convergence, it is immediate that each space $H^k_\perp$ is a Hilbert subspace (with the standard inner product) of $H^k$. The advantage of working with these spaces is that they provide the following splitting properties.

\begin{lemma}\label{lem:H1split}
Let $k\in \{0,1,2\}$. For every $u\in H^k$ there exists a unique pair $(\alpha,u_\perp) \in \C \times H^k_\perp$ such that $u = u_\perp +\alpha\brtp$. 
\end{lemma}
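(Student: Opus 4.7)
The plan is to use the standard Hilbert-space orthogonal decomposition of $L^2$ along the one-dimensional subspace $\mathrm{span}\,\{\brtp\}$, restricted to the regularity class $H^k$. The key fact enabling this restriction is that $\brtp \in H^k$ for every $k \geq 0$ (recalled at the start of Section 2.2), so shifting by a scalar multiple of $\brtp$ does not leave $H^k$.

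Concretely, given $u \in H^k$, I would simply define
\[
\alpha \defin \frac{\langle u, \brtp\rangle_{L^2}}{\|\brtp\|_{L^2}^2}, \qquad u_\perp \defin u - \alpha\,\brtp.
\]
The scalar $\alpha$ is well defined because $u \in H^k \subset L^2$ (so the inner product makes sense by Cauchy--Schwarz) and because $\brtp$ is a nontrivial element of $H^k$, hence $\|\brtp\|_{L^2} > 0$ (the static N\'eel profile is strictly monotone). Since $\brtp \in H^k$ and $u \in H^k$, we have $u_\perp \in H^k$. A direct computation gives
\[
\langle u_\perp, \brtp\rangle_{L^2} = \langle u, \brtp\rangle_{L^2} - \alpha\,\|\brtp\|_{L^2}^2 = 0,
\]
so $u_\perp \in L^2_\perp$, and thus $u_\perp \in H^k \cap L^2_\perp = H^k_\perp$, producing the claimed decomposition.

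For uniqueness, if $u = u_\perp + \alpha\,\brtp = \tilde u_\perp + \tilde\alpha\,\brtp$ with $u_\perp, \tilde u_\perp \in H^k_\perp$, then $(\alpha - \tilde\alpha)\brtp = \tilde u_\perp - u_\perp$. Taking the $L^2$-inner product against $\brtp$ and using orthogonality of $u_\perp$ and $\tilde u_\perp$ to $\brtp$ yields $(\alpha - \tilde\alpha)\|\brtp\|_{L^2}^2 = 0$, hence $\alpha = \tilde\alpha$ and consequently $u_\perp = \tilde u_\perp$. There is no real obstacle here; the only mild point worth noting explicitly is that the orthogonality condition in the definition of $H^k_\perp$ is the $L^2$-orthogonality (not an $H^k$-orthogonality), which is what makes the naive projection formula above land inside $H^k_\perp$ without losing regularity.
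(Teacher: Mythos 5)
Your proof is correct and is essentially the argument the paper relies on: the paper simply defers to Lemma 6.3 of the companion paper \cite{CMMP23}, which is the same standard $L^2$-orthogonal projection onto $\lspan\{\brtp\}$, using that $\brtp\in H^k$ for all $k$ and $\|\brtp\|_{L^2}>0$ so that $u_\perp = u-\alpha\brtp$ stays in $H^k$ and lands in $H^k_\perp$, with uniqueness by pairing against $\brtp$. No gaps.
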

\begin{proof}
See the proof of Lemma 6.3 in \cite{CMMP23}.
\end{proof}

\begin{remark}
\label{remcontention}
It is to be noticed that $H^1_\perp\times L^2_\perp \subset (H^1\times L^2)_{\perp} = \ran(\cP)$, inasmuch as $U = (u,v) \in H^1_\perp\times L^2_\perp$ implies that $\langle U, \Phi_0 \rangle_{L^2 \times L^2} = \langle u, \nu \brt' \rangle_{L^2} + \langle v , \brt \rangle_{L^2} = 0$ and, consequently, $U = \cP U$.
\end{remark}

\begin{lemma}\label{lem:sesquiform}
Let $a : H^1_\perp \times H^1_\perp \to \C$ be defined by  
\begin{equation}
\label{eq:a}
a\left[ u, v\right] := \pld{\partial_x u}{\partial_x v} + \pld{s_{\theta}\cT(s_{\theta}u)}{v}-\pld{c_{\theta}u}{v}.
\end{equation}    
Then $a[\cdot,\cdot]$ is a positive sesquilinear Hermitian form that defines an inner product equivalent to $\phu{\cdot}{\cdot}$ in $H_\perp^1$, i.e. there exist two positive constants, $K, k > 0$, such that
\begin{equation}\label{eq:equivalence}
    k\nhu{u}\leq \|u\|_a\leq K\nhu{u},
\end{equation}
for all $u \in H^1_\perp$. Moreover, 
\begin{equation}\label{eq:sesquiform}
a[u,v]=\pld{\opl u}{v},\quad  u\in H_\perp^2,\ v\in H_\perp^1.
\end{equation}
\end{lemma}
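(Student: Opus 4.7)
The plan is to dispatch the three assertions in a natural order: first the algebraic properties (sesquilinearity, Hermitian symmetry, and the representation \eqref{eq:sesquiform}), then the quantitative upper bound of $\|\cdot\|_a$ by $\|\cdot\|_{H^1}$, and finally the more delicate lower bound, which is the genuine analytic input and where the spectral information from Theorem \ref{prev_main_res_L} enters.

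\textbf{Algebraic properties and the representation formula.} Sesquilinearity of $a$ is immediate because each of the three terms in \eqref{eq:a} is a composition of linear operators with the sesquilinear inner product $\langle\cdot,\cdot\rangle_{L^2}$. For the Hermitian property, I would use that $s_{\brt}$ and $c_{\brt}$ are real-valued (in particular, multiplication by them is $L^2$-self-adjoint) and that $\cT=1+(-\Delta)^{1/2}$ is an $L^2$-self-adjoint Fourier multiplier (real symbol $1+|\xi|$). Chaining these symmetries yields $\langle s_{\brt}\cT(s_{\brt}u),v\rangle_{L^2}=\langle u,s_{\brt}\cT(s_{\brt}v)\rangle_{L^2}$ and analogously for the other terms, whence $a[u,v]=\overline{a[v,u]}$. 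To prove \eqref{eq:sesquiform}, take $u\in H_\perp^2$ and $v\in H_\perp^1$; since $u\in H^2$ and $v\in H^1$, an integration by parts (the boundary terms vanish by the standard density/decay argument in $H^1(\mathbb R)$) gives $\langle \partial_x u,\partial_x v\rangle_{L^2}=-\langle \partial_x^2 u,v\rangle_{L^2}$, and substituting into \eqref{eq:a} recovers exactly $\langle \cL u,v\rangle_{L^2}$ from the definition \eqref{eq:previousL}.

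\textbf{Upper bound.} For the right-hand inequality in \eqref{eq:equivalence}, I would estimate each term of $a[u,u]$ separately. The Dirichlet term contributes $\|\partial_x u\|_{L^2}^2\le\|u\|_{H^1}^2$. For the nonlocal term, since $\brtp\in H^2\hookrightarrow L^\infty$ (hence $s_{\brt},c_{\brt}\in L^\infty$) and the map $u\mapsto s_{\brt}u$ is bounded $H^1\to H^1$, Proposition \ref{lem:cT} with $k=0$ gives $\|\cT(s_{\brt}u)\|_{L^2}\lesssim \|s_{\brt}u\|_{H^1}\lesssim\|u\|_{H^1}$, so $|\langle s_{\brt}\cT(s_{\brt}u),u\rangle_{L^2}|\lesssim\|u\|_{H^1}\|u\|_{L^2}$. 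The anisotropy term satisfies $|\langle c_{\brt}u,u\rangle_{L^2}|\le\|c_{\brt}\|_{L^\infty}\|u\|_{L^2}^2$. Summing yields $\|u\|_a\le K\|u\|_{H^1}$.

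\textbf{Lower bound (the main point).} Here the key ingredient is Theorem \ref{prev_main_res_L}: on $L_\perp^2$ the self-adjoint restriction $\cL_\perp$ has spectrum in $[\Lambda_0,\infty)$, so by the spectral theorem $\langle \cL u,u\rangle_{L^2}\ge\Lambda_0\|u\|_{L^2}^2$ for all $u\in H_\perp^2$. Combined with \eqref{eq:sesquiform}, this gives $a[u,u]\ge\Lambda_0\|u\|_{L^2}^2$ on $H_\perp^2$, and by density of $H_\perp^2$ in $H_\perp^1$ (an $H^1$-approximation followed by removing the small $\brtp$-component, using continuity of $a$ from the upper bound already established) the same inequality extends to all $u\in H_\perp^1$. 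To pull out the derivative norm as well, I would rewrite
\[
\|\partial_x u\|_{L^2}^2 = a[u,u]-\|s_{\brt}u\|_{L^2}^2-\|s_{\brt}u\|_{\dot H^{1/2}}^2+\langle c_{\brt}u,u\rangle_{L^2}\le a[u,u]+\|c_{\brt}\|_{L^\infty}\|u\|_{L^2}^2,
\]
where the nonnegative nonlocal contribution is discarded. Combining with the $L^2$-control gives $\|u\|_{H^1}^2\le(1+(1+\|c_{\brt}\|_{L^\infty})/\Lambda_0)\,a[u,u]$, i.e.\ $k\|u\|_{H^1}\le\|u\|_a$ with $k$ explicit. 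Positive definiteness of $a$ on $H_\perp^1$ (hence the inner-product claim) follows at once from the lower bound.

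The main obstacle is the lower bound: one cannot simply invoke the coercivity of $\cL$, because $\cL$ is only nonnegative (it has a zero eigenvalue along $\brtp$), so the restriction to $L_\perp^2$ and the density passage $H_\perp^2\to H_\perp^1$ must be handled carefully to avoid losing the spectral gap. Once these are in place, extracting the full $H^1$-coercivity is a short algebraic manipulation.
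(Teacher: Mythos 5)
Your proof is correct. The paper gives no argument of its own here—it simply cites Definition 6.4, Proposition 4.6 and Lemma 6.5 of the companion paper \cite{CMMP23}—and your self-contained derivation (Hermitian symmetry from self-adjointness of $\cT$ and the real multipliers $s_{\brt}$, $c_{\brt}$; integration by parts for \eqref{eq:sesquiform}; the upper bound via Proposition \ref{lem:cT}; and the lower bound obtained by combining the spectral gap $\Lambda_0$ of $\cL$ on the $L^2$-orthogonal complement of $\brtp$, extended from $H^2_\perp$ to $H^1_\perp$ by density, with the identity $\langle s_{\brt}\cT(s_{\brt}u),u\rangle_{L^2}=\|s_{\brt}u\|_{L^2}^2+\|s_{\brt}u\|_{\dot{H}^{1/2}}^2$ to recover the gradient term) uses precisely the ingredients that citation packages, so it is essentially the intended route.
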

\begin{proof}
See Definition 6.4, Proposition 4.6 and Lemma 6.5 in \cite{CMMP23}.
\end{proof}

The norm induced by this inner product is denoted by $\|u\|_a = \sqrt{a[u,u]}$. We will also consider the Hilbert subspace $Z=H_\perp^1\times L^2$ with the inner product $\langle \cdot,\cdot\rangle_{Z}$ given by 
\begin{equation} \label{eq:normZ}
    \langle U,V\rangle_{Z} := a[u,w] + \pld{v}{z},  
\end{equation}
for $U = (u,v)$ and $V = (w,z)$ in $Z$. It is known that $\langle \cdot,\cdot\rangle_{Z}$ and $\langle \cdot,\cdot\rangle_{H^1\times L^2}$ are equivalent inner products in $Z$ (see \cite{CMMP23}, Lemma 6.5).

\begin{remark}\label{rmk:H1xL2split}
It follows from the definition of the subspace $(H^1\times L^2)_\perp$ (see \eqref{ranofP}) that for every $\overline{U}\in (H^1\times L^2)_\perp$ then $\overline{U}=(\overline{u}, \overline{v})$ for $\overline{u}\in H^1$ and $\overline{v}\in L^2$. Moreover, by Lemma \ref{lem:H1split}, we know that $\overline{u} = u + \alpha\brtp$ and $\overline{v} = v + \beta\brtp$ for some $u\in H^1_\perp$, $v\in L^2_\perp$, and $\alpha,\beta\in \C$. With these expressions, the condition $\langle U, \Phi_0 \rangle_{L^2 \times L^2} = 0$ is rewritten as 
\[
\begin{aligned}
0=\langle U, \Phi_0 \rangle_{L^2 \times L^2}& = \big\langle \overline{u}, \nu \brtp \big\rangle_{L^2}  + \big\langle \overline{v}, \brtp \big\rangle_{L^2} \\
&= \big\langle u + \alpha \brtp, \nu \brtp \big\rangle_{L^2}  + \big\langle v + \beta \brtp, \brtp \big\rangle_{L^2} \\
&= \big( \alpha\nu + \beta \big) \big\|\brtp\big\|^2_{L^2}.
\end{aligned}
\]
In view that $\big\|\brtp\big\|_{L^2} \neq 0$ we obtain that $\alpha \nu = - \beta$. Therefore, for every $\overline{U}\in (H^1\times L^2)_\perp$ there exists a unique $U = (u,v) \in H^1_\perp\times L^2_\perp$ and $\alpha\in \C$ such that
\begin{equation}\label{eq:X_1decomposition}
\overline{U} = U + \alpha \begin{pmatrix}
    1\\ -\nu
\end{pmatrix}\brtp = \begin{pmatrix} u + \alpha \brtp \\ v - \alpha \nu \brtp \end{pmatrix}.
\end{equation}
In addition, the triangle inequality and the equivalence between the norms $\| \cdot \|_a$ and $\|\cdot\|_{H^1}$ in $H^1_\perp$ imply that
\[
\|\overline{U}\|_{H^1\times L^2}\leq \max\{1,k\}\|U\|_Z + |\alpha|\|(\brtp,-\nu \brtp)\|_{H^1\times L^2},
\]
and 
\begin{equation}\label{eq:normZ_H1xL2}
\begin{aligned}
\|U\|_Z + |\alpha|\|(\brtp,-\nu \brtp)\|_{H^1\times L^2}&\leq \max\{1,K\} \|U\|_{H^1\times L^2} + \|\alpha(\brtp,-\nu \brtp)\|_{H^1\times L^2}\\
&\leq \max\{2,1+K\}\|\overline{U}\|_{H^1\times L^2},
\end{aligned}
\end{equation}
both hold.
\end{remark}

Finally, notice that, because of Theorem \ref{thm:steady_result}, $\lambda =0$ belongs to the resolvent of the operator $\cA_\perp$ defined as the restriction of $\cA$ to $(H^1\times L^2)_\perp$. Therefore, it has a bounded and linear inverse. We end this Section by providing an explicit characterization of $\cA_\perp^{-1}$. This is a result which is not directly stated in \cite{CMO07} nor in \cite{CMMP23}, but it is an immediate consequence of Remark \ref{rmk:H1xL2split} and Theorem \ref{prev_main_res_L}. 

\begin{lemma}
\label{lem:invA}
The restriction $\cA_\perp$ of $\cA$ in $(H^1\times L^2)_\perp$ has a bounded inverse $\cA_\perp^{-1}: (H^1\times L^2)_\perp\to (H^1\times L^2)_\perp$ given by 
    \begin{equation}\label{eq:invA}
    \cA_\perp^{-1} \overline{U} = \begin{pmatrix}
        -\nu \cL_\perp^{-1} & -\cL_\perp^{-1} \\ \Id & 0
    \end{pmatrix}
    U - \alpha \begin{pmatrix}
        1/\nu \\ -1
    \end{pmatrix}\brtp,
    \end{equation}
    for all $\overline{U} \in (H^1 \times L^2)_{\perp}$ and where $U$ is given by \eqref{eq:X_1decomposition}.
\end{lemma}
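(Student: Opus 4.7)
\emph{Approach.} The plan is to invert $\cA_\perp$ by direct construction and then verify the proposed formula. Since Theorem~\ref{thm:steady_result}(b) gives $0\in\rho(\cA_\perp)$, a bounded inverse exists and is uniquely characterized by the identity $\cA W=\overline U$; it therefore suffices to show that the right-hand side of \eqref{eq:invA} defines an element $W\in D(\cA_\perp)$ realizing this identity. Given $\overline U\in (H^1\times L^2)_\perp$, I would first invoke Remark~\ref{rmk:H1xL2split} to write uniquely $\overline U=(u+\alpha\brtp,\,v-\alpha\nu\brtp)$ with $(u,v)\in H^1_\perp\times L^2_\perp$ and $\alpha\in\C$, and set $W=(w_1,w_2)$ with $w_1:=-\nu\cL_\perp^{-1}u-\cL_\perp^{-1}v-\alpha\brtp/\nu$ and $w_2:=u+\alpha\brtp$.

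\emph{Membership in $D(\cA_\perp)$ and identity $\cA W=\overline U$.} Since $\cL_\perp^{-1}:L^2_\perp\to H^2_\perp$ is bounded (Remark after Theorem~\ref{prev_main_res_L}) and $\brtp\in H^2$, it follows that $w_1\in H^2$ and $w_2\in H^1$. To check $W\in (H^1\times L^2)_\perp$, I compute
\[
\langle W,\Phi_0\rangle_{L^2\times L^2}=\nu\langle w_1,\brtp\rangle_{L^2}+\langle w_2,\brtp\rangle_{L^2};
\]
the terms involving $\cL_\perp^{-1}u$ and $\cL_\perp^{-1}v$ vanish by the orthogonality $H^2_\perp\perp\brtp$ in $L^2$, and the remaining $\alpha$-contributions cancel as $\nu(-\alpha/\nu)\|\brtp\|_{L^2}^2+\alpha\|\brtp\|_{L^2}^2=0$. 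Hence $W\in D(\cA_\perp)$. Applying $\cA$ then yields first component $w_2=u+\alpha\brtp$ directly, and for the second component
\[
-\cL w_1-\nu w_2=-\cL\bigl(-\nu\cL_\perp^{-1}u-\cL_\perp^{-1}v-\tfrac{\alpha}{\nu}\brtp\bigr)-\nu(u+\alpha\brtp);
\]
using $\cL\brtp=0$ (Theorem~\ref{prev_main_res_L}) together with $\cL\cL_\perp^{-1}=\Id$ on $L^2_\perp$, this simplifies to $\nu u+v-\nu u-\nu\alpha\brtp=v-\alpha\nu\brtp$, matching the second component of $\overline U$.

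\emph{Boundedness and main obstacle.} Boundedness of $\cA_\perp^{-1}$ will follow by combining the continuity of the splitting estimate \eqref{eq:normZ_H1xL2} with the boundedness of $\cL_\perp^{-1}$ and the finite-dimensional contribution of the $\alpha\brtp$ correction. The only delicate point — and the piece of bookkeeping I expect to be the main obstacle — is the precise choice of the coefficient $-\alpha/\nu$ in front of $\brtp$ in $w_1$: this addition to the ``obvious'' particular solution $-\nu\cL_\perp^{-1}u-\cL_\perp^{-1}v$ lies in $\ker\cL$ and is therefore invisible to $\cA$ acting on $w_1$, yet it is precisely what forces $W\in(H^1\times L^2)_\perp$ rather than merely in $H^2\times H^1$. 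Once this kernel-direction correction is fixed correctly, the rest of the argument is routine algebra.
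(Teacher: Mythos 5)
Your proposal is correct and is precisely the direct computation the paper alludes to (its proof is just ``Follows from a direct computation''): you verify that the claimed expression lies in $D(\cA_\perp)$, including the kernel-direction correction $-\alpha\brtp/\nu$ needed for membership in $(H^1\times L^2)_\perp$, and that $\cA$ maps it back to $\overline{U}$, which together with $0\in\rho(\cA_\perp)$ identifies it as $\cA_\perp^{-1}\overline{U}$. No gaps; the boundedness remark is even slightly redundant since it already follows from $0\in\rho(\cA_\perp)$, as you note at the outset.
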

\begin{proof}
    Follows from a direct computation.
\end{proof}

\subsection{Regularity of the traveling profile}

In this Section we prove that equation \eqref{travelling_profile} and the condition that $\psi\in H^2_{\rm{loc}}(\R)$ with $\psi-\brt\in H^2(\R)$ imply the smoothness of the traveling profile. Here we assume that both the static and the moving profiles, $\brt = \brt(z)$ and $\psi = \psi(z)$, respectively, are functions of the same spatial variable $z \in \R$.

\begin{lemma}
\label{lem:reg}
    Let $\psi\in H^2_{\rm{loc}}(\R)$ with $\psi-\brt\in H^2(\R)$ be a solution to \eqref{travelling_profile} for a fixed $c \in \R$, with $|c| < 1$. Then the following statements hold:
    \begin{itemize}
        \item[\rm{(a)}]\label{item:reg1} There exist two $\brt$-dependent positive constants $C_1$ and $C_2$ such that
        \[
        \begin{aligned}
        \nhu{\sin\psi-\sin\brt} &\leq C_1(\brt)\nhu{\psi-\brt},\\
        \nhu{\cos\psi-\cos\brt} &\leq C_2(\brt)\nhu{\psi-\brt}.
        \end{aligned}
        \]
        \item[\rm{(b)}]\label{item:reg2} $\psi$ is a smooth function with $\psi' \in H^k(\R)$ for any $k>0$.
    \end{itemize} 
\end{lemma}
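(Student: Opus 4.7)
My proof plan is as follows.

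For \textbf{part (a)} I would proceed by direct computation. The pointwise mean-value inequalities
\[
|\sin\psi - \sin\brt| \leq |\psi - \brt|, \qquad |\cos\psi - \cos\brt| \leq |\psi - \brt|
\]
immediately control the $L^2$-parts of the claimed estimates. For the $H^1$-seminorms I would differentiate via the chain rule,
\[
(\sin\psi - \sin\brt)' = \cos\psi\,(\psi' - \brt') + (\cos\psi - \cos\brt)\,\brt',
\]
\[
(\cos\psi - \cos\brt)' = -\sin\psi\,(\psi' - \brt') - (\sin\psi - \sin\brt)\,\brt',
\]
and then apply $|\cos\psi|, |\sin\psi| \leq 1$ together with $\brt' \in L^\infty$ (which follows from $\brt' \in H^1$ granted by Theorem \ref{prev_main_res_L} and the embedding $H^1 \hookrightarrow L^\infty$) and the pointwise estimates above. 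This yields $\|\,\cdot\,\|_{L^2} \lesssim (1 + \|\brt'\|_{L^\infty})\|\psi - \brt\|_{H^1}$ for both quantities, producing the constants $C_1(\brt)$ and $C_2(\brt)$.

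For \textbf{part (b)} I would recast \eqref{travelling_profile} as an ODE for $w := \psi - \brt$ and bootstrap. Using the expression $\nabla \cE(\theta) = -\theta'' - \sin\theta\,\cT(\cos\theta)$ together with the identity $\brt'' = -\sin\brt\,\cT(\cos\brt)$ (which encodes $\nabla \cE(\brt) = 0$), subtracting gives
\[
(c^2 - 1)\,w'' = H\cos\psi + \nu c\,\psi' + \sin\psi\,\cT(\cos\psi) - (1-c^2)\,\sin\brt\,\cT(\cos\brt).
\]
The hypothesis $|c| < 1$ makes $c^2 - 1$ invertible and lets me solve for $w''$. I would then prove by induction on $k \geq 2$ that $w \in H^k$: the base case is the standing hypothesis $w \in H^2$, and the inductive step consists in verifying that each term on the right-hand side belongs to $H^{k-1}$. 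Since $\brt' \in H^j$ for all $j \geq 0$ by Theorem \ref{prev_main_res_L}, this will give $\psi' = \brt' + w' \in H^k$ for every $k > 0$, hence the smoothness of $\psi$ by Sobolev embedding.

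The \textbf{main obstacle} is controlling the nonlocal nonlinearity $\sin\psi\,\cT(\cos\psi)$ at the level of $H^{k-1}$. I would first establish an iterated version of part (a), namely $\|\sin\psi - \sin\brt\|_{H^k} + \|\cos\psi - \cos\brt\|_{H^k} \leq C_k(\brt)\,\|w\|_{H^k}$, by induction on $k$ using Moser/Leibniz estimates and the fact that all derivatives of $\brt$ are bounded. Then I would split the nonlocal contribution as
\[
\sin\psi\,\cT(\cos\psi) - \sin\brt\,\cT(\cos\brt) = (\sin\psi - \sin\brt)\,\cT(\cos\psi) + \sin\brt\,\cT(\cos\psi - \cos\brt),
\]
observing that $\cos\psi = \cos\brt + (\cos\psi - \cos\brt) \in H^k$ by the inductive hypothesis and the smoothness/decay of $\cos\brt$ recorded in \cite{CMO07,CMMP23}, so that $\cT(\cos\psi), \cT(\cos\psi - \cos\brt) \in H^{k-1}$ by Proposition \ref{lem:cT}. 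Multiplying by the smooth bounded function $\sin\brt$ (resp.\ by $\sin\psi - \sin\brt \in H^k$) preserves $H^{k-1}$ regularity, and the remaining terms $H\cos\psi = H\cos\brt + H(\cos\psi - \cos\brt)$, $\nu c (\brt' + w')$, and $(1-c^2)\brt''$ are handled analogously. This closes the induction and yields $w \in H^k$ for every $k \geq 2$.
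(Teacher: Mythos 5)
Your proposal is correct and follows essentially the same route as the paper: part (a) via pointwise Lipschitz bounds and the chain rule, and part (b) via solving the profile equation for the second derivative (using $|c|<1$) and bootstrapping with the boundedness of $\cT$ (Proposition \ref{lem:cT}, trivially extended to all $k$) and the smoothness of $\brt$; the paper bootstraps $\psi$ directly (showing $\cos\psi\in H^2$ and $\sin\psi\,\cT\cos\psi\in H^1$, hence $\psi''\in H^1$, then iterating) rather than $w=\psi-\brt$, but this is only a presentational difference. One small caveat: for $k\geq 2$ your iterated bound $\|\sin\psi-\sin\brt\|_{H^k}\leq C_k(\brt)\|w\|_{H^k}$ cannot hold with a constant depending on $\brt$ alone, since quadratic terms such as $(w')^2$ appear in the higher derivatives; the constant must also involve lower-order norms of $w$, which is harmless here because only membership in $H^k$ for the fixed function $w$ is needed to close the induction.
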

\begin{proof}
    We first prove (a). Since $0\leq 1-\cos t\leq |t|$ for every real $t$, then by the angle-addition formula and the triangle inequality, we get 
    \[
    \begin{aligned}
    \nld{\cos \psi-\cos \brt}^2&=\nld{[1-\cos(\psi-\brt)]\cos\brt+\sin(\psi-\brt)\sin \brt}\leq 2\|\psi-\brt\|_{L^2}^2,\\
    \nld{\sin \psi-\sin \brt}^2&=\nld{[1-\cos(\psi-\brt)]\sin\brt-\sin(\psi-\brt)\cos \brt}\leq 2\|\psi-\brt\|_{L^2}^2.
    \end{aligned}
    \]
    Analogously, we obtain
    \[
    \begin{aligned}
     \nld{(\cos \psi -\cos \brt)'} &=\nld{(\psi-\brt)'\sin\psi +(\sin\psi-\sin\brt)\brt'}\\
     &\leq \nld{(\psi-\brt)'}+\|\brt'\|_{L^\infty}\nld{\psi-\brt},
    \end{aligned}
    \]
    and,
    \[
    \begin{aligned}
      \nld{(\sin\psi-\sin\brt)'} &= \nld{(\psi-\brt)'\cos \psi+(\cos \psi-\cos\brt)\brt'}\\
      &\leq \nld{(\psi-\brt)'}+2\|\brt'\|_{L^\infty}\nld{\psi-\brt}.
    \end{aligned}
    \]
    This shows (a).
    
In order to verify (b), we first notice that $\cos \psi\in H^2$. Indeed, by the reverse triangle inequality in $H^1$ and the fact that $\cos \brt\in H^1$ we first obtain
    \[
    \nhu{\cos\psi} \leq C_2(\brt)\nhu{\psi-\brt} + \nhu{\cos\brt}<\infty.
    \]
Now, notice that Sobolev's embedding in $\R$ yields $\nld{\psi'^2}^2\leq \|\psi'\|_{L^\infty}^2\nld{\psi'}^2<\infty$. Thus,
    \[
    \begin{aligned}
    \nld{(\cos \psi)''} &= \nld{(\psi-\brt)''\sin\psi +\brt''\sin\psi+ \psi'^2\cos\psi}\\
    &\leq \|(\psi-\brt)''\|_{L^2}^2+\|\brt''\|_{L^2}^2+\|\psi'^2\|_{L^2}^2\\
    &<\infty.
    \end{aligned}
    \]
 We conclude that $\cos \psi\in H^2$.

Second, a straightforward calculation of $\nabla \cE(\psi)$ implies that equation \eqref{travelling_profile} can be recast as
    \begin{equation}\label{eq:bootstraping}
    -(1-c^2)\psi''-c\nu \psi' +\sin \psi \cT \cos \psi = H\cos \psi.
    \end{equation}
    Due to the boundedness of $\cT$ (see Proposition \ref{lem:cT}) it follows that  
    \[
    \nld{\sin \psi \cT \cos \psi}\leq \nld{\cT \cos \psi}\leq C\nhu{\cos \psi}<\infty.
    \]
    Hence, Proposition \ref{lem:cT} and H\"older and Sobolev inequalities imply  
    \[
    \begin{aligned}
    \nld{(\sin \psi \cT \cos \psi)'} &\leq \nld{\psi'\cT \cos \psi} + \nhu{\cT \cos \psi}\\
     &\leq C\left[\|\psi'\|^2_{L^\infty}\nhu{\cos \psi} + \nhd{\cos\psi}\right] \\
     &<\infty.
    \end{aligned}
    \]
    
We then conclude that $\sin \psi \cT \cos \psi\in H^1$. Therefore, for $|c| < 1$, equation \eqref{eq:bootstraping} implies that $\psi''\in H^1$. A bootstrapping argument then shows that $\psi'\in H^k$ for any $k$ and the smoothness of $\psi$ follows from Sobolev's embedding. This shows the result.
\end{proof}

\section{Perturbation equations and the spectral stability problem}
\label{secperturb}

In this Section we derive the linearized equations for the perturbations of a moving N\'eel wall's phase, set up the spectral problem for the perturbations in an appropriate energy space and establish its relative boundedness with respect to the corresponding operator in the static case.

\subsection{A scalar equation for the perturbation}

Following \cite{CMO07}, we consider finite energy perturbations of the moving N\'eel wall's phase in the space $H^1$. Therefore, for any solution $\theta(x,t)$ to \eqref{reddyneq} where $\nu>0$ and $c\in \R$ are fixed, we propose the \emph{ansatz} $\theta(x,t)=u(z,t) + \psi(z)$. Here  $z=x-ct$ is the spatial coordinate in a moving frame with speed $c$, $\psi = \psi(z)$ is the traveling profile solution to \eqref{travelling_profile} and, for each fixed $t > 0$, $u(\cdot,t)\in H^1$ is the finite energy perturbation of the profile. A substitution into the differential equation in \eqref{reddyneq} yields 
\[
c^2\partial^2_z\psi - \nu c\partial_z\psi+\partial_t^2u -2c\partial_{tz}u + c^2\partial_z^2u+\nu\partial_t u-c\nu\partial_z u + \nabla \cE(u+\psi) =H\cos(u+\psi).
\]
By adding and subtracting $\nabla\cE(\psi)+H\cos \psi$ and using the profile equation \eqref{travelling_profile}, we arrive at
\begin{equation}\label{eq:perturbation}
\partial_t^2u -2c\partial_{tz}u + c^2\partial_z^2u+\nu\partial_t u-c\nu\partial_z u + \nabla \cE(u+\psi)- \nabla\cE(\psi) = H(\cos(u+\psi)-\cos \psi).
\end{equation}
Equation \eqref{eq:perturbation} can be written as a first order system by introducing the auxiliary variable  $v=\partial_t u$. Indeed, after a straightforward computation one obtains
\begin{equation}\label{eq:perturbation_sys}
\partial_t\begin{pmatrix}
    u \\ v
\end{pmatrix} = \begin{pmatrix}
    0 & \Id \\ -\cL_c & 2c\partial_z-\nu\Id
\end{pmatrix}\begin{pmatrix}
    u \\ v
\end{pmatrix}+\begin{pmatrix}
    0 \\ \cN(u)
\end{pmatrix},    
\end{equation}
where $\cL_c : H^1 \to L^2$ is a linear, densely defined operator with domain $D(\cL)=H^2$ and $\cN(u) : L^2\to L^2$ comprises the nonlinear terms, again with $D(\cN) = H^2$. They are explicitly given by
\begin{equation}\label{eq:Lc}
    \begin{aligned}
        \cL_c &:= \ocL_c +c^2\partial^2_z-c\nu\partial_z+H s_\psi \, \Id,\\
        \cN(u) &:= -\nabla\cE(\psi+u)+\nabla\cE(\psi)+\ocL_c u + H(\cos(\psi+u)-\cos \psi+s_\psi u ),
    \end{aligned}
\end{equation}
where the operator $\ocL_c:L^2\to L^2$ is the $L^2$-linearization of $\nabla \cE$ around $\psi$ with domain $D(\ocL_c)=H^2$. A direct calculation implies that 
\begin{equation}
        \label{defL0}
        \ocL_c u := - \partial^2_z u + s_ \psi\cT (s_\psi u) - c_\psi u, \qquad u \in D(\cL) = H^2.
    \end{equation}
Upon substitution into \eqref{eq:Lc},
\begin{equation}
\label{eq:L}
\left\{
\arraycolsep=1pt\def\arraystretch{1.5}
\begin{array}{l}
    \cL_c : H^1 \to L^2,\\
    D(\cL_c) = H^2,\\
    \cL_c u = -(1-c^2)\partial^2_z u + s_\psi\cT (s_\psi u) -c\nu\partial_z u - (c_\psi-Hs_\psi) u.
    \end{array}
    \right.
\end{equation}
The latter is the explicit expression of the linearized operator around the moving N\'eel wall's phase for a fixed value of the magnetization $H$ (which, in turns, fixes the velocity $c$) and whose spectral properties determine its stability.

\subsection{The spectral problem}

According to custom \cite{KaPro13,San02} we start our stability analysis by disregarding the nonlinear terms and by focusing on the associated linear spectral problem. To that end, notice that the linearization of equation \eqref{eq:perturbation} around the traveling N\'eel profile reads
\begin{equation}
 \label{linequ}
\partial_t^2u -2c\partial_{tz}u +\nu\partial_t u +\cL_c u = 0.
\end{equation}
Last equation is equivalent to the following linear system,
\begin{equation}
 \label{linsyst}
\partial_t\begin{pmatrix}
    u \\ v
\end{pmatrix} = \begin{pmatrix}
    0 & \Id \\ -\cL_c & 2c\partial_z-\nu\Id
\end{pmatrix}\begin{pmatrix}
    u \\ v
\end{pmatrix}.    
\end{equation}
We specialize equation \eqref{linequ} to perturbations of the form $e^{\lambda t}u(z)$, where $\lambda \in \C$ is the spectral parameter. This  substitution yields the following (non-standard) spectral problem
\begin{equation}
 \label{spectralu}
(\lambda^2- 2c\lambda\partial_{z} +\nu\lambda) u + \cL u = 0.
\end{equation}

The latter is a spectral equation which is quadratic in $\lambda$, also known as a \textit{quadratic operator pencil} (cf. Markus \cite{Ma88}). The transformation $v = \lambda u$ (the spectral equivalent of the change of variables $v = \partial_t u$) is used to write equation \eqref{spectralu} as a genuine eigenvalue problem of the form
\begin{equation}
 \label{evproblem}
  \cA_c \begin{pmatrix}
         u \\ v
        \end{pmatrix}:=
        \begin{pmatrix}
         0 & \Id \\ -\cL_c & 2c\partial_z-\nu\Id
        \end{pmatrix} \begin{pmatrix}
         u \\ v
        \end{pmatrix} =
\lambda \begin{pmatrix}
         u \\ v
        \end{pmatrix},
\end{equation}
but now posed on a base space defined as a Cartesian product. The matrix operator $\cA_c$ is often called the companion matrix to the quadratic spectral pencil (see \cite{BrJoK14,KHKT13} for more information). Notice that equation \eqref{evproblem} is the spectral equation associated to the linear system \eqref{linsyst}. We shall refer to both \eqref{spectralu} and \eqref{evproblem} as the spectral problem making no distinction between them.

Therefore, we are interested in the spectral properties of the following \emph{block matrix operator} (for each fixed value of $c$),
\[
 \cA_c : H^1 \times L^2 \to H^1 \times L^2, 
\]
regarded as a linear, densely defined operator in the Hilbert space $H^1 \times L^2$ with domain $D(\cA_c) := H^2 \times H^1$ and defined by  
\begin{equation}
\label{eq:A_c}
\begin{pmatrix}
    u\\ v
\end{pmatrix}\; \mapsto \;  \begin{pmatrix}
    0 & \Id \\ -\cL_c & 2c\partial_z-\nu\Id
    \end{pmatrix} \begin{pmatrix}
    u \\ v
\end{pmatrix},
\end{equation}
for all $(u,v) \in H^2 \times H^1$.

\begin{remark}
\label{rmk:a_zero}
From the expressions \eqref{eq:A_c} and \eqref{eq:L} we notice that in the case when $c=0$, the operators $\cA_0$ and $\cL_0$ coincide with the operators $\cA$ and $\cL$, respectively, for the static N\'eel wall phase (see Theorem \ref{thm:steady_result}). From now on and for the sake of brevity, we simply write $\cA$ and $\cL$ to denote the ``static'' operators.
\end{remark}

\begin{remark}
\label{remimportant}
At this point we call the reader's attention to the following fundamental property of the operator $\cA_c$: once the intensity of the external magnetic field has been set, $|H| \ll 1$, then the speed of the moving N\'eel wall gets fixed and it is small as well, $|c| = O(|H|)$. As a result, if we examine the expression of the linearized operator $\cL_c$ around the moving profile, we notice that it can be recast as
\[
\begin{aligned}
\cL_c u &= \cL u + c^2 \partial_z^2 u + s_\psi(z) \cT (s_\psi(z)u) - c \nu \partial_z u  + \big( c_{\brt} (z) - c_\psi(z) + H s_\psi(z) \big) u \\ 
&\quad - s_{\brt(z)} \cT (s_{\brt(z)}u),
\end{aligned}
\]
for all $u \in H^2$, where
\[
\cL u = - \partial_z^2 u + s_{\brt}(z) \cT(s_{\brt}(z) u) - c_{\brt}(z) u,
\]
is the linearized operator around the static N\'eel profile, $\brt = \brt(z)$, evaluated at the Galilean variable of translation, $z = x-ct$. Whence, the block operator $\cA_c$ can be written as 
\[
\cA_c = \cA + \cB_c,
\]
where the block perturbation operator
\[
\left\{ 
\arraycolsep=1pt\def\arraystretch{1.5}
\begin{array}{l}
\cB_c : D(\cB_c) \subset H^1 \times L^2 \to H^1 \times L^2,\\
D(\cB_c) = H^2 \times H^1,
\end{array}
\right.
\]
is defined as
\begin{equation}
\label{defBc}
\cB_c := \begin{pmatrix}
0 & 0 \\ -\cS & 2c \partial_z
\end{pmatrix},
\end{equation}
and the non-local operator $\cS : D(\cS) = H^2 \subset H^1 \to L^2$ is determined via the relation
\begin{equation}
\label{eq:S}
\begin{aligned}
    \cS u &:= c^2\partial_z^2u- c\nu\partial_z u + s_\psi(z) \cT(s_\psi(z) u)- s_{\brt}(z) \cT(s_{\brt}(z) u) \\
    &\quad + \big(c_{\brt}(z) -c_\psi(z) + H s_\psi(z) \big)u,
    \end{aligned}
    \end{equation}
    for all $u \in H^2$. In what follows we verify that this is a relatively bounded perturbation of the static operator $\cA$.
\end{remark}

\subsection{Relative boundedness}

We recall that given two linear operators $\cP$ and $\cS$ on a Banach space $X$, we say that $\cS$ is {\em $\cP$-bounded} if $D(\cP)\subset D(\cS)$ and there exists two non-negative constants $a$ and $b$ such that
\begin{equation}\label{eq:P_bound}
\|\cS u\| \leq a\|u\|+b\|\cP u\|, \qquad \forall \, u\in D(\cP).     
\end{equation}
The greatest lower bound $b_0$ of all admissible constants $b$ is called the {\em $\cP$-bound} of $\cS$.

\begin{lemma}
\label{lem:growth_bounds}
The operator $\cB_c = \cA_c-\cA$ is $\cA$-bounded in $H^1\times L^2$ with coeficients $a(c)$ and $b(c)$ that are continuous and increasing functions of $c$ and both tend to zero as $c$ does.
\end{lemma}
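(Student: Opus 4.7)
The plan is to write $\cB_c U = (0,\,-\cS u + 2c\partial_z v)$ for $U=(u,v) \in D(\cA) = H^2 \times H^1$, so that
\[
\|\cB_c U\|_{H^1\times L^2} \;\leq\; \|\cS u\|_{L^2} + 2|c|\,\|v\|_{H^1}.
\]
The second term is bounded immediately by $2|c|\,\|\cA U\|_{H^1\times L^2}$, since the first component of $\cA U$ is precisely $v$. The core of the argument is to control $\|\cS u\|_{L^2}$ in terms of $\|u\|_{H^1}$ and $\|\cA U\|_{H^1\times L^2}$ with coefficients that vanish as $c\to 0$. For the $H^2$-regularity needed on $u$, I would invert the definition \eqref{eq:previousL} of $\cL$ to write $\partial_z^2 u = -\cL u + s_{\brt}\cT(s_{\brt}u) - c_{\brt}u$, and use Proposition \ref{lem:cT} together with $\brt' \in L^\infty$ to obtain $\|u\|_{H^2}\leq C(\|u\|_{H^1}+\|\cL u\|_{L^2})$; the second component of $\cA U = (v,\,-\cL u - \nu v)$ then gives $\|\cL u\|_{L^2}\leq (1+\nu)\,\|\cA U\|_{H^1\times L^2}$.

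Next, I would bound each of the five pieces of $\cS u$ in \eqref{eq:S}. The term $c^2\partial_z^2 u$ is controlled by $c^2 C(\|u\|_{H^1}+\|\cA U\|_{H^1\times L^2})$ via the previous step, and $c\nu\partial_z u$ directly by $|c|\nu\|u\|_{H^1}$. For the nonlocal difference I would telescope
\[
s_\psi\cT(s_\psi u) - s_{\brt}\cT(s_{\brt}u) \;=\; s_\psi\cT\big((s_\psi - s_{\brt})u\big) + (s_\psi - s_{\brt})\cT(s_{\brt}u),
\]
so that Proposition \ref{lem:cT} applies with arguments in $H^1$. Lemma \ref{lem:reg}(a) gives $\|s_\psi - s_{\brt}\|_{H^1}\leq C\|\psi-\brt\|_{H^1}$, and the one-dimensional Sobolev embedding $H^1\hookrightarrow L^\infty$ (hence the Banach-algebra property of $H^1(\R)$) yields $\|(s_\psi-s_{\brt})u\|_{H^1}\leq C\|s_\psi - s_{\brt}\|_{H^1}\|u\|_{H^1}$; combined with the estimate $\|\psi-\brt\|_{H^1} = O(|c|)$ from Remark \ref{remP}, each summand is bounded by $O(|c|)\|u\|_{H^1}$. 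The $(c_{\brt} - c_\psi)u$ term is handled analogously, by telescoping $\cos\brt\,\cT(\cos\brt) - \cos\psi\,\cT(\cos\psi)$ and using Lemma \ref{lem:reg}(a) plus Proposition \ref{lem:cT} to get $\|c_{\brt} - c_\psi\|_{L^2} = O(|c|)$, then multiplying by $\|u\|_{L^\infty}\leq C\|u\|_{H^1}$. Finally, $\|H s_\psi u\|_{L^2}\leq |H|\,\|u\|_{L^2} = O(|c|)\,\|u\|_{L^2}$, since $|H| = O(|c|)$ by \eqref{eq:mobility} and Remark \ref{remP}.

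Collecting these five bounds yields $\|\cS u\|_{L^2}\leq C(|c|+c^2)\|u\|_{H^1} + Cc^2\|\cA U\|_{H^1\times L^2}$, which, together with the trivial estimate for $2c\partial_z v$, produces
\[
\|\cB_c U\|_{H^1\times L^2} \;\leq\; C(|c|+c^2)\,\|U\|_{H^1\times L^2} + C(|c|+c^2)\,\|\cA U\|_{H^1\times L^2}.
\]
Setting $a(c) := C(|c|+c^2)$ and $b(c) := C(|c|+c^2)$ gives the $\cA$-bound \eqref{eq:P_bound}; both functions are continuous, monotonically increasing in $|c|$, and vanish as $c\to 0$, which is the conclusion. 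The main technical obstacle is the careful bookkeeping around the nonlocal term: one must split the telescoping sums so that $\cT$ acts only on $H^1$-arguments (since Proposition \ref{lem:cT} provides only $\cT:H^1\to L^2$), and must exploit the $H^1$-algebra structure in one dimension to estimate products of Sobolev functions without losing regularity. The remaining ingredients are purely residual in $c$, and so the smallness of the wall's speed is what makes $\cB_c$ a small relatively bounded perturbation of $\cA$.
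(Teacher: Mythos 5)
Your proposal is correct and follows essentially the same route as the paper: the same decomposition of $\cS$, the same telescoping of the nonlocal terms with Proposition \ref{lem:cT}, Lemma \ref{lem:reg}(a), the $H^1$-algebra property, and the $O(|c|)$ bounds on $H$ and $\|\psi-\brt\|_{H^1}$. The only cosmetic difference is bookkeeping: you recover $\|\partial_z^2 u\|_{L^2}$ by inverting the definition of $\cL$ and bounding $\|\cL u\|_{L^2}\leq(1+\nu)\|\cA U\|_{H^1\times L^2}$, whereas the paper adds and subtracts $c^2[s_{\brt}\cT(s_{\brt}u)-c_{\brt}u]$ so as to absorb $c^2\partial_z^2 u$ and $2c\partial_z v$ together into $\|\cL u+\nu v\|_{L^2}$ and $\|v\|_{H^1}$ — the two bookkeepings are equivalent and yield the same coefficients $a(c),b(c)=O(|c|)$.
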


An immediate consequence of Lemma \ref{lem:growth_bounds} is the following result.
\begin{corollary}\label{cor:growth_bounds}
    The operator $\cA_c$ is $\cA$-bounded in $H^1\times L^2$ with coefficients $\tilde a(c)$ and $\tilde b(c)$ that are continuous and increasing functions of $c$ such that $\tilde a(c)$ and $1-\tilde b(c)$ tend to zero as $c$ does.
\end{corollary}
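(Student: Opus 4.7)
The plan is to first note that the corollary is an essentially free consequence of the lemma. Since $\cA_c = \cA + \cB_c$ by Remark~\ref{remimportant}, the triangle inequality gives, for every $U \in D(\cA) = D(\cA_c)$,
\[
\|\cA_c U\|_{H^1\times L^2} \le \|\cA U\|_{H^1\times L^2} + \|\cB_c U\|_{H^1\times L^2} \le a(c)\|U\|_{H^1\times L^2} + (1+b(c))\|\cA U\|_{H^1\times L^2}.
\]
Setting $\tilde a(c) := a(c)$ and $\tilde b(c) := 1 + b(c)$ yields the claim, with $\tilde a(c) \to 0$ and $1 - \tilde b(c) = -b(c) \to 0$ as $c \to 0$, and continuity/monotonicity in $|c|$ inherited from $a(c)$ and $b(c)$. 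The real content therefore lies in Lemma~\ref{lem:growth_bounds}, which I outline next.

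Fix $U = (u,v) \in D(\cA) = H^2 \times H^1$. From the block form \eqref{defBc}, $\cB_c U$ has zero first component and $-\cS u + 2c\partial_z v$ in the second, so
\[
\|\cB_c U\|_{H^1\times L^2} \le \|\cS u\|_{L^2} + 2|c|\,\|\partial_z v\|_{L^2}.
\]
The $v$-contribution is immediate: the first block of $\cA U$ is $v$ itself, hence $\|v\|_{H^1} \le \|\cA U\|_{H^1\times L^2}$ and the term is absorbed into the $b(c)$-coefficient with factor $2|c|$. It remains to dominate $\|\cS u\|_{L^2}$ by a small-in-$c$ combination of $\|u\|_{H^1}$ and $\|\cA U\|_{H^1\times L^2}$.

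I would treat the five summands of \eqref{eq:S} separately. First, $\|c^2\partial_z^2 u\|_{L^2}\le c^2\|u\|_{H^2}$, and the $H^2$-norm of $u$ is controlled via an elliptic identity for $\cL$: rearranging $\cL u = -u'' + s_{\brt}\cT(s_{\brt}u) - c_{\brt} u$, using boundedness of $\cT:H^1\to L^2$ (Proposition~\ref{lem:cT}) and of the smooth, bounded profile coefficients $s_{\brt}, c_{\brt}$, yields $\|u\|_{H^2}\le C(\|\cL u\|_{L^2} + \|u\|_{H^1})$, while the second block of $\cA U$ provides $\|\cL u\|_{L^2}\le (1+\nu)\|\cA U\|_{H^1\times L^2}$. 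The drift $\|c\nu\partial_z u\|_{L^2}\le \nu|c|\,\|u\|_{H^1}$ is direct. For the delicate nonlocal difference, the telescoping
\[
s_\psi \cT(s_\psi u) - s_{\brt}\cT(s_{\brt} u) = s_\psi\cT\bigl((s_\psi - s_{\brt})u\bigr) + (s_\psi - s_{\brt})\cT(s_{\brt} u),
\]
combined with $\|s_\psi\|_{L^\infty}\le 1$, the Sobolev embedding $H^1(\R)\hookrightarrow L^\infty$ (so that $H^1$ is an algebra), and Proposition~\ref{lem:cT}, produces an $L^2$-bound of the form $C\|s_\psi - s_{\brt}\|_{H^1}\|u\|_{H^1}$. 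Lemma~\ref{lem:reg}(a) together with $\|\psi - \brt\|_{H^1} = O(|c|)$ from \eqref{H1orderc} forces this factor to be $O(|c|)$. The coefficient difference $(c_{\brt} - c_\psi)u$ is handled analogously by expanding $c_\theta = \cos\theta\cdot\cT(\cos\theta)$ and telescoping twice. The last summand satisfies $\|H s_\psi u\|_{L^2}\le |H|\,\|u\|_{L^2} = O(|c|)\|u\|_{L^2}$ by the mobility expansion \eqref{eq:mobility}.

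Collecting estimates yields $\|\cS u\|_{L^2} \le a_0(c)\|u\|_{H^1} + b_0(c)\|\cA U\|_{H^1\times L^2}$ with $a_0(c), b_0(c) = O(|c|)$ continuous, from which the lemma follows with $a(c) := a_0(c)$ and $b(c) := b_0(c) + 2|c|$, both continuous and monotone majorants in $|c|$. The main obstacle I anticipate is the bookkeeping for the nonlocal differences: the order-$|c|$ smallness of $\psi - \brt$ in $H^1$ must be transferred through the nonlinear trigonometric functions (by Lemma~\ref{lem:reg}) and then through the nonlocal operator $\cT$ (by Proposition~\ref{lem:cT}), and assembling the two telescoping terms for each of $s_\psi\cT(s_\psi\cdot)$ and $\cos\psi\cdot\cT(\cos\psi)$ in a uniform way requires some care, but no new machinery.
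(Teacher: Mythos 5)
Your derivation of the corollary from Lemma \ref{lem:growth_bounds} via the triangle inequality on $\cA_c=\cA+\cB_c$, taking $\tilde a(c)=a(c)$ and $\tilde b(c)=1+b(c)$, is correct and is exactly how the paper obtains it (the paper records the corollary as an immediate consequence of the lemma). Your supplementary sketch of the lemma itself also tracks the paper's proof (telescoping the nonlocal differences, Lemma \ref{lem:reg}(a), Proposition \ref{lem:cT}, and the $O(|c|)$ smallness from \eqref{H1orderc} and \eqref{eq:mobility}), the only divergence being that you absorb $c^2\partial_z^2u$ through an elliptic bound $\|u\|_{H^2}\leq C(\|\cL u\|_{L^2}+\|u\|_{H^1})$ while the paper adds and subtracts $c^2[s_{\brt}\cT(s_{\brt}u)-c_{\brt}u]$ to reconstitute $\|\cL u+\nu v\|_{L^2}$ directly; both yield the same $O(|c|)$ coefficients.
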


\begin{proof} [Proof of Lemma \ref{lem:growth_bounds}]
First notice that for each $U = (u,v) \in D(\cA_c) = H^2 \times H^1$ we have
    \begin{equation} 
    \label{eq:norms_rel_bound}
    \begin{aligned}
        \|\cB_c U\|_{H^1\times L^2} &= \nld{\cS u-2c\partial_z v}, \\  
        \|\cA U\|^2_{H^1\times L^2} &= \nhu{v}^2+\nld{\cL u+\nu v}^2. 
        \end{aligned}
    \end{equation} 
    
    Second, we bound $\|\cB_c U\|_{H^1\times L^2}$ by a sum of terms depending on powers of $H$ (or, equivalently, on powers of $c$). Substitution of \eqref{eq:S} into the expression of $\cB_cU$, as well as addition and subtraction of  $c^2[s_{\brt}\cT(s_{\brt}u)-c_{\brt}u]$, yields
    \[
    \begin{aligned}
    \cB_c U=(0,-&c^2[-\partial_z^2u+s_{\brt}\cT(s_{\brt}u)-c_{\brt}u+\nu v] -2c\partial_z v +c^2\nu v 
   +c^2[s_{\brt}\cT(s_{\brt}u)-c_{\brt}u] + \\
   & +Hs_\psi u- c\nu\partial_z u + s_\psi \cT(s_\psi u)- s_{\brt} \cT(s_{\brt} u)+(c_{\brt}-c_\psi)u).
    \end{aligned}
    \]
    Using \eqref{eq:previousL} we get 
    \begin{equation}
    \label{la44}
    \begin{aligned}
        \|\cB_c U\|_{H^1\times L^2} &\leq \nld{c^2(\cL u+ \nu v)+2c\partial_z v -c^2\nu v} + \\
        & \quad + c^2\nld{s_{\brt}\cT(s_{\brt}u)-c_{\brt}u} +\\ 
        & \quad +\nld{Hs_\psi u-c\nu \partial_z u}+ \nld{s_\psi \cT (s_\psi u)-s_{\brt} \cT (s_{\brt} u)}+ \\
        & \quad + \nld{(c_\psi-c_{\brt})u}.
    \end{aligned} 
    \end{equation} 
    
    Third, we estimate each term of the right hand side of last inequality. The first term is directly bounded by Young's inequality, since 
    \[
    \begin{aligned}
    \nld{c^2(\cL u+ \nu v)+2c\partial_z v +c^2\nu v}^2 &\leq 3(c^4\nld{\cL u+ \nu v}^2 + 4c^2\nld{\partial_z v}^2 +c^4\nu^2\nld{v}^2)\\
    &\leq 3\max\{c^4,4c^2,c^4\nu^2\}(\nld{\cL u+ \nu v}^2 +\nhu{v}^2).
    \end{aligned}
    \]
    Since we are interested in the behavior of the operators for small values of $|c|$ and in the limit when $c \to 0$, we may assume that $|c|<1$. Therefore, we get the estimate
    \begin{equation}
    \label{eq:aux1}
    \begin{aligned}
    \nld{c^2(\cL u+ \nu v)+2c\partial_z v +c^2\nu v} &\leq 2\max\{2|c|,c^2\nu\} \|\cA U\|_{H^1\times L^2}.
    \end{aligned}
    \end{equation}
    
    For the second term on the right hand side of \eqref{la44}, we use the triangle inequality and the boundedness of $\cT:H^{1}\to L^2$ (see Proposition \ref{lem:cT}) to obtain
    \begin{equation}\label{eq:aux2}
    \begin{aligned}
    c^2\nld{s_{\brt}\cT(s_{\brt}u)-c_{\brt}u} &\leq c^2[\nld{s_{\brt}\cT(s_{\brt}u)}+\nld{c_{\brt}u}]\\ & \leq c^2[C\nhu{(s_{\brt}u)}+\nld{c_{\brt}u}]\\ &\leq C(\brt)c^2\nhu{u}.
    \end{aligned}
    \end{equation}

    In the same fashion, we use \eqref{eq:mobility} in order to estimate the third term in \eqref{la44}, yielding
    \begin{equation}\label{eq:aux3}
    \begin{aligned}
    \nld{Hs_\psi u-c\nu \partial_z u} &\leq |H|\left(\nld{s_\psi u}+\frac{c\nu}{H}\nld{ \partial_z u}\right) \\
    &\leq |H|\left[\nld{u}+ \frac{2}{\|\brt'\|_{L^2}^2+o(1)} \nld{\partial_z u}\right]\\
    &\leq |H|\left[\nld{u}+ \frac{2}{\|\brt'\|_{L^2}^2+o(1)} \nld{\partial_z u}\right] \\
    &=|H|C(\brt)\nhu{u}.
    \end{aligned}
    \end{equation}
    
   In order to estimate the fourth term in the right hand side of \eqref{la44}, we add and subtract the term $s_\psi\cT(s_{\brt}u)$ and use the boundedness of $\cT$ (Proposition \ref{lem:cT}). The result is
    \begin{equation*}
    \begin{aligned}
    \nld{s_\psi \cT (s_\psi u)-s_{\brt} \cT (s_{\brt} u)} 
    &\leq \nld{s_\psi \cT ((s_\psi-s_{\brt}) u)}+\nld{(s_\psi-s_{\brt}) \cT (s_{\brt} u)}\\
    &\leq C\nhu{(s_\psi-s_{\brt}) u}+\|(s_\psi-s_{\brt})\|_{L^\infty}\nhu{ s_{\brt} u}.
    \end{aligned}
    \end{equation*}
    Since $H^1$ is a Banach algebra, its norm sub-distributes any product. This fact and Sobolev's inequality imply the estimate
    \begin{equation}
    \label{eq:aux4} 
    \begin{aligned}
    \nld{s_\psi \cT (s_\psi u)-s_{\brt} \cT (s_{\brt} u)} &\leq C\nhu{s_\psi-s_{\brt}}(\nhu{u}+\nhu{ s_{\brt} u}) \\ 
    &\leq C(\brt)\nhu{\psi-\brt}\nhu{u},
    \end{aligned}
    \end{equation}
    where the last inequality follows from Lemma \ref{lem:reg} (a).
    
    Finally, we estimate the last term in \eqref{la44}. By H\"older and Sobolev inequalities, we clearly have
    \[
    \nld{(c_\psi-c_{\brt})u} \leq \nld{c_\psi-c_{\brt}}\|u\|_{L^\infty}\leq \nld{c_\psi-c_{\brt}}\nhu{u}.
    \]
    We symmetrize previous relation by adding and subtracting $\cos \psi \cT \cos\brt$. Hence, together with Proposition \ref{lem:cT}, one obtains
    \begin{equation}\label{eq:aux5}
    \begin{aligned}
    \nld{(c_\psi-c_{\brt})u} 
    &\leq \nld{\cos \psi \cT (\cos\psi-\cos\brt)+ (\cos\psi-\cos\brt)\cT \cos \brt }\nhu{u}\\
    &\leq \left [ \nld{\cT (\cos\psi-\cos\brt)} + \nld{(\cos\psi-\cos\brt)\cT \cos \brt   }\right]\nhu{u}\\
    &\leq C(\brt)\nhu{\cos\psi-\cos\brt}\nhu{u}\\    
    &\leq C(\brt)\nhu{\psi-\brt}\nhu{u}.    
    \end{aligned}
    \end{equation}
    Combining estimates \eqref{eq:aux1} thru \eqref{eq:aux5}, we arrive at
    \begin{equation}
    \label{Bcestc}
    \begin{aligned}
    \|\cB_c U\|_{H^1\times L^2} &\leq  2\max\{2|c|,c^2\nu\} \|\cA U\|_{H^1\times L^2} \\ & \quad  + C(\brt)(c^2 + H + 2 \| \psi-\brt \|_{H^1}) \| u \|_{H^1}.
    \end{aligned}
    \end{equation}
    Due to Remark \ref{rmk:1} and Theorem \ref{thm:CMO07}, the result now follows.
\end{proof}
A direct consequence of Lemma \ref{lem:growth_bounds} is the closedness of the block operator $\cA_c$ for $c$ small enough.
\begin{corollary}\label{cor:closedness}
    The block operator $\cA_c$ is closed in $H^1\times L^2$ for every $c$  such that $b(c)<1$.
\end{corollary}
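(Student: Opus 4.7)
The plan is to deduce the closedness of $\cA_c$ directly from the classical Kato perturbation theorem for closed operators (see, e.g., Kato \cite{Kat80}, Ch.~IV, Thm.~1.1): if $\cP$ is a closed operator on a Banach space $X$ and $\cS$ is $\cP$-bounded with $\cP$-bound strictly less than $1$, then $\cP+\cS$, endowed with the domain $D(\cP)\cap D(\cS)$, is closed. Since $\cA_c = \cA + \cB_c$ holds by construction (see Remark \ref{remimportant}), my task reduces to checking that all the hypotheses of this abstract perturbation result are fulfilled for our specific operators on $X = H^1\times L^2$.

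First, I would observe that $\cA$ is closed on $H^1\times L^2$ with domain $D(\cA) = H^2\times H^1$. This follows at once from Theorem \ref{thm:steady_result}(c): $\cA$ is the infinitesimal generator of a $C_0$-semigroup of quasicontractions, and any such generator is automatically closed and densely defined. Next, I would check that the domains match up correctly, namely $D(\cB_c) = H^2\times H^1 = D(\cA)$, so that $D(\cA)\cap D(\cB_c) = D(\cA)$; this is immediate from the definition of $\cB_c$ in \eqref{defBc} together with the mapping properties of $\cS$ in \eqref{eq:S} (and the boundedness of $\cT:H^{k+1}\to H^k$ from Proposition \ref{lem:cT}, which guarantees $\cS u \in L^2$ for $u \in H^2$).

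With these preliminaries in place, Lemma \ref{lem:growth_bounds} supplies the relative bound
\[
\|\cB_c U\|_{H^1\times L^2}\;\leq\; a(c)\,\|U\|_{H^1\times L^2} + b(c)\,\|\cA U\|_{H^1\times L^2}, \qquad U \in D(\cA),
\]
where $a(c), b(c) \to 0$ as $c\to 0$. Under the hypothesis $b(c) < 1$, the $\cA$-bound of $\cB_c$ is strictly less than $1$, so Kato's theorem applies and yields the closedness of $\cA + \cB_c = \cA_c$ on $D(\cA_c) = D(\cA) = H^2\times H^1$.

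Since the whole argument is a direct application of a textbook result, I do not expect any serious obstacle; the only thing to be careful about is the bookkeeping of domains, i.e. verifying explicitly that $D(\cB_c) \supset D(\cA)$ so that the sum operator is well defined on $D(\cA)$ and coincides with the operator $\cA_c$ defined in \eqref{eq:A_c}. Everything else is contained in Theorem \ref{thm:steady_result} and Lemma \ref{lem:growth_bounds}.
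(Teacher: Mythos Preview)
Your proposal is correct and follows essentially the same approach as the paper: both invoke Kato's Theorem IV.1.1 to conclude that $\cA_c = \cA + \cB_c$ is closed from the closedness of $\cA$ together with the relative bound from Lemma \ref{lem:growth_bounds} with $b(c)<1$. The only cosmetic difference is that the paper cites a lemma from the companion paper \cite{CMMP23} for the closedness of $\cA$, whereas you obtain it from the semigroup generation in Theorem \ref{thm:steady_result}(c); both justifications are valid.
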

\begin{proof}
From Theorem IV.1.1 in Kato \cite{Kat80}, p. 190,  we easily conclude that $\cA_c$ is a closed operator because $\cA_c = \cA+\cB_c$, where $\cB_c$ is $\cA$-bounded with $\cA$-bound less that $b(c)<1$ and $\cA$ is a closed operator (see Lemma 5.4 in \cite{CMMP23}).
\end{proof}

\begin{remark}
Estimate \eqref{Bcestc} and Remark \ref{remP} imply that, in fact, $\cA_c$ is an $O(|c|)$-perturbation of the static operator $\cA$ (indeed, substitute \eqref{H1orderc} and $H = O(|c|)$ into \eqref{Bcestc} to conclude). This is the main observation that motivates the strategy of analyzing the spectrum/resolvent of $\cA_c$ as a perturbation of the spectrum/resolvent of $\cA$.
\end{remark}

\section{Resolvent-type estimates and spectral stability}
\label{secresests}

In this Section we establish the spectral stability of the family of operators $\cA_c$ for sufficiently small parameter values of $|c|$. To that end, we apply perturbation theory of linear operators and the spectral stability of the linearized block operator $\cA$ around the static N\'eel wall (see Theorem \ref{thm:steady_result} and the companion paper \cite{CMMP23}). The strategy of proof is based on establishing resolvent-type estimates which allow to locate the resolvent set of the perturbed operator in terms of the resolvent set of the operator around the static wall. 

We start by showing that the translation eigenvalue of $\cA_c$ is isolated for all $|c|$ sufficiently small.

\begin{lemma}
\label{lem:zeroev}
    Let the operator $\cA_c$ be as in \eqref{evproblem} and let $\cA$ and $\zeta(\nu)>0$ be as in Lemma \ref{thm:steady_result}. Fix $\delta \in(0,\zeta(\nu))$. Then there exists $c_\delta>0$ such that the square  $\Gamma$ in the complex plane with length side $2\delta$ and center at $\lambda = 0$, more precisely (see Figure \ref{fig1contour} below), 
    \begin{equation}
   \label{defofcontourGamma} 
    \Gamma := \big\{ z \in \C \, : \, z = \zeta \pm i \delta, \, \zeta \in [-\delta,\delta]\big\} \cup \big\{ z \in \C \, : \, z = \pm \delta + i \zeta, \, \zeta \in [-\delta,\delta]\big\},
    \end{equation}
     belongs to $\rho(\cA_c)$ and such that
\[
\sigma(\cA_c) \cap \intconv (\Gamma) = \{ 0 \}, \qquad \text{for all} \;\; |c| \leq c_\delta,
\]     
where $\intconv (\Gamma)$ denotes the interior of the convex hull of $\Gamma$. Moreover, $\lambda=0$ is an isolated simple eigenvalue of the block operator $\cA_c$ with eigenspace spanned by 
\begin{equation}
\label{defofThetac}
\Theta_c:=(\partial_z \psi,0) \in D(\cA_c) = H^2 \times H^1.
\end{equation}    
\end{lemma}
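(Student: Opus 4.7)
\emph{Overall strategy.} I would view $\cA_c$ as a small relatively bounded perturbation of the static operator $\cA$ via Lemma \ref{lem:growth_bounds}, and transfer the isolated simple eigenvalue at $\lambda=0$ from $\cA$ to $\cA_c$ by combining a Neumann series for the resolvent on $\Gamma$ with a Riesz projection continuity argument. The choice $\delta\in(0,\zeta(\nu))$ ensures, by Theorem \ref{thm:steady_result}, that $\sigma(\cA)\cap\intconv(\Gamma)=\{0\}$ and $\Gamma\subset\rho(\cA)$, so $0$ is the only spectrum of the unperturbed operator inside the square, and $\Gamma$ stays inside $\rho(\cA)$.

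\emph{The kernel element.} Translation invariance of the energy $\cE$ makes $\psi_s(z):=\psi(z+s)$ a solution of \eqref{travelling_profile} for every $s\in\R$; differentiating that identity at $s=0$ in the strong sense yields
\[
c^2\psi'''-\nu c\psi''+\ocL_c\psi'+H s_\psi\,\psi'=0,
\]
and a direct comparison with the decomposition \eqref{eq:Lc} shows this is precisely $\cL_c\psi'=0$. By Lemma \ref{lem:reg}(b), $\psi'\in H^k$ for every $k$, so $\Theta_c=(\psi',0)\in H^2\times H^1=D(\cA_c)$ and $\cA_c\Theta_c=0$.

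\emph{Invertibility of $\lambda-\cA_c$ on $\Gamma$.} Since $\rho(\cA)\supset\Gamma$ and the map $\lambda\mapsto(\lambda-\cA)^{-1}$ is holomorphic on $\rho(\cA)$, compactness of $\Gamma$ yields
\[
M:=\sup_{\lambda\in\Gamma}\|(\lambda-\cA)^{-1}\|_{H^1\times L^2\to H^1\times L^2}<\infty.
\]
Write $\lambda-\cA_c=(\lambda-\cA)\bigl[I-(\lambda-\cA)^{-1}\cB_c\bigr]$. Applying the $\cA$-bound of Lemma \ref{lem:growth_bounds} together with the identity $\cA(\lambda-\cA)^{-1}=\lambda(\lambda-\cA)^{-1}-I$, and using $|\lambda|\le\sqrt{2}\,\delta$ on $\Gamma$,
\[
\|\cB_c(\lambda-\cA)^{-1}\|\le a(c)\,M+b(c)\bigl(\sqrt{2}\,\delta\,M+1\bigr),\qquad \lambda\in\Gamma.
\]
Since $a(c),b(c)\to 0$ as $c\to 0$, there exists $c_\delta>0$ such that the right-hand side is $\le 1/2$ for all $|c|\le c_\delta$. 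The Neumann series then gives invertibility of $I-(\lambda-\cA)^{-1}\cB_c$ on $\Gamma$, hence $\Gamma\subset\rho(\cA_c)$ with $(\lambda-\cA_c)^{-1}\to(\lambda-\cA)^{-1}$ in operator norm, uniformly on $\Gamma$.

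\emph{Simplicity via Riesz projections.} The uniform resolvent convergence makes the spectral projections
\[
\cP_c:=\frac{1}{2\pi i}\oint_\Gamma(\lambda-\cA_c)^{-1}\,d\lambda
\]
satisfy $\|\cP_c-\cP_0\|\to 0$ as $c\to 0$. Because $\cP_0$ has rank one by Theorem \ref{thm:steady_result}(a), the classical stability of projections under small perturbations (Kato \cite{Kat80}, Lemma I.4.10) gives $\mathrm{rank}(\cP_c)=1$ after possibly shrinking $c_\delta$. Combined with $\cA_c\Theta_c=0$ and $\Theta_c\neq 0$, this forces $\sigma(\cA_c)\cap\intconv(\Gamma)=\{0\}$, with $0$ algebraically and geometrically simple and eigenspace $\mathrm{span}\{\Theta_c\}$. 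The principal technical obstacle is the quantitative control of $M$: although finiteness is automatic on the compact set $\Gamma\subset\rho(\cA)$, the quantitative bound---crucially the behavior of $(\lambda-\cA)^{-1}$ on the portion of $\Gamma$ closest to the eigenvalue $\lambda=0$---is precisely what the resolvent-type estimates developed in the remainder of Section \ref{secresests} are designed to supply.
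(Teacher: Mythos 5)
Your proposal is correct and follows essentially the same route as the paper: relative boundedness from Lemma \ref{lem:growth_bounds}, uniform bounds for $\|(\cA-\lambda)^{-1}\|$ and $\|\cA(\cA-\lambda)^{-1}\|$ on the compact contour $\Gamma\subset\rho(\cA)$, smallness of the perturbation for $|c|\leq c_\delta$ to place $\Gamma$ in $\rho(\cA_c)$ and to preserve the dimension of the spectral subspace inside $\Gamma$ (the paper invokes Kato, Theorems IV.3.16 and IV.3.18, which package exactly your Neumann-series-plus-projection-continuity argument), followed by identification of the kernel element $\Theta_c=(\partial_z\psi,0)$ by differentiating \eqref{travelling_profile}. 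Only a cosmetic slip: since you estimate $\|\cB_c(\lambda-\cA)^{-1}\|$, the factorization should be $\lambda-\cA_c=\bigl[I-\cB_c(\lambda-\cA)^{-1}\bigr](\lambda-\cA)$ rather than with the resolvent factor on the left.
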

\begin{proof}
Let $\delta \in (0,\zeta(\nu))$ be fixed. First, we prove the existence of $c_\delta>0$ such that $\Gamma$ isolates $\lambda = 0$ from the rest of the spectrum of $\cA_c$ for all $|c| \leq c_\delta$. Since $\lambda = 0$ is an isolated simple eigenvalue of $\cA$ and the rest of the spectrum satisfies $\Re \lambda<-\zeta(\nu)$ (see Theorem \ref{thm:steady_result}), we know that the closed simple curve $\Gamma\subset \rho(\cA)$ divides $\sigma(\cA)$. Moreover, it induces a partition on the working space $H^1\times L^2 = M'(\cA)\oplus M''(\cA)$ where $M'(\cA)$ and $M''(\cA)$ are the images of $H^1\times L^2$ under the linear operators $\cP$ and $\Id-\cP$, respectively,  with 
\[
\cP:=\Id- \int_{\Gamma} (\cA-\lambda \Id)^{-1} \, d\lambda.
\]
Because $\|(\cA-\lambda)^{-1}\|$ and $ \|\cA(\cA-\lambda)^{-1}\|$  are analytic functions in $\lambda$, we conclude that both functions are bounded on $\Gamma$.
Also, from Lemma \ref{lem:growth_bounds} we  know that $\cA_c = \cA+\cB_c$ and $\cB_c$ is $\cA$-bounded with constants $a(c)$ and $b(c)$ that are continuous and increasing in $c$ with $a(0)=b(0)=0$ but they do not depend on $\lambda$. These facts imply the existence of $c_\delta>0$  small enough such that 
    \[
 a(c_\delta) \sup_{\lambda\in\Gamma}\|(\cA-\lambda)^{-1}\|+ b(c_\delta) \sup_{\lambda\in\Gamma}\|\cA(\cA-\lambda)^{-1}\|<1.
    \]
Due to Lemma \ref{lem:growth_bounds}, properties of the supremum imply that 
\[
    \sup_{\lambda\in\Gamma} \left( a(c)\|(\cA-\lambda)^{-1}\|+ b(c) \|\cA(\cA-\lambda)^{-1}\|\right)< 1,
\]
for every $c$ with $|c|\leq c_\delta$. Therefore, Theorems IV.3.18 and IV.3.16 in Kato \cite{Kat80}, pp. 212 -- 214, imply that $\Gamma\subset \rho(\cA_c)$ and that it splits $\sigma(\cA_c)$ into two sets. Moreover, the induced partition on the working space $H^1\times L^2 = M'(\cA_c)\oplus M''(\cA_c)$ satisfies $\dim M'(\cA) = \dim M'(\cA_c)$ and $\dim M''(\cA) = \dim M''(\cA_c)$. 

Second, we verify that $\lambda = 0$ is an isolated simple eigenvalue of $\cA_c$. Indeed, we know that $(\partial_z \psi,0) \in \ker \cA_c$; this follows by noticing that $\cA_c (\partial_z \psi,0) =(0,\cL_c \partial_z \psi)$, but a direct differentiation of \eqref{travelling_profile} implies $\cL_c \partial_z \psi=0$, for any value of $c$. If $|c|\leq c_\delta$ holds then we conclude that the algebraic multiplicity of $\lambda=0$ is equal to one because $\dim M'(\cA) = \dim M'(\cA_c)$. The lemma is proved. 
\end{proof}

Lemma \ref{lem:zeroev} implies that $\Gamma$ separates the origin from the rest of the spectrum, but it still does not guarantee the spectral stability nor the existence of an spectral gap in the following sense: that for each $0 < |c| \ll 1$ there exists $\zeta_c > 0$ such that $\sigma(\cA_c) \backslash \{ 0 \}$ is strictly contained in $\{\Re \lambda < - \zeta_c < 0\}$. The following results are devoted to prove these properties.

\begin{lemma}
\label{lem:res_equations}
 Let $\lambda\in \C$ be fixed and different from zero. Suppose that $U=(u,v) $ and $F=(f,g) $ belong to $H^1_\perp\times L^2_\perp$. 
\begin{itemize}
\item[\rm{(a)}] \label{resolvent_ineq1} If $U, F$ satisfy 
\begin{equation}\label{resolvent_equa1}
(\lambda-\cA)U = F, 
\end{equation}
then $u,\, v,\,f,$ and $g$ satisfy the following estimate
\begin{equation}\label{eq:bound0}
\Big| \lambda^*\|u\|_a^2 + (\lambda+\nu)\nld{v}^2 \Big| \leq \|U\|_Z\|F\|_Z.
\end{equation} 
\item[\rm{(b)}] \label{resolvent_ineq2} If $U, F$ satisfy 
\begin{equation}\label{resolvent_equa2}
(\cA-\lambda)\cA_\perp^{-1}U = F, 
\end{equation}
then $u,\, v,\,f,$ and $g$ satisfy the following estimates,
\begin{subequations}
\begin{empheq}{align}
\Big| \lambda^*\|u\|_a^2 + (\nu+\lambda)\nld{v}^2 \Big| &\leq C_1(|\lambda|\|u\|_a + |\lambda +\nu|\nld{v})\|F\|_{Z}, \label{eq:bound1}\\
\Big| \|u\|_a - |\nu+\lambda|\Lambda_0^{-1/2}\nld{v} \Big| &\leq C_2 \|F\|_Z, \label{eq:bound2}
\end{empheq}
\end{subequations}
for some constants $C_1, C_2 > 0$ and where $\Lambda_0 > 0$ is the spectral bound for $\cL$ from Theorem \ref{prev_main_res_L}.
\end{itemize}
\end{lemma}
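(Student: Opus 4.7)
The plan is to derive both parts by pairing the resolvent equations with carefully chosen elements of $H^1_\perp$, then combining the resulting scalar identities via Cauchy--Schwarz and the spectral lower bound $\|u\|_a^2 \geq \Lambda_0 \|u\|_{L^2}^2$ from Theorem~\ref{prev_main_res_L}. In part (a) the hypothesis $U \in H^1_\perp\times L^2_\perp$ together with $U\in D(\cA)$ forces $U \in H^2_\perp\times H^1_\perp$, while in part (b) Lemma~\ref{lem:invA} places $\cA_\perp^{-1}U\in H^2_\perp\times H^1_\perp\subset D(\cA)$, so the algebraic manipulations below are legitimate.

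For part (a), I would expand $(\lambda - \cA)U = F$ as the pair $\lambda u - v = f$, $(\lambda + \nu)v + \cL u = g$, pair the first in $L^2$ with $\cL u$ and the second with $v$, conjugate the first identity to produce the factor $\lambda^*$, and add. The mixed terms $\pm\langle \cL u, v\rangle_{L^2}$ cancel thanks to self-adjointness of $\cL$ and Hermiticity, yielding the key identity
\begin{equation*}
\lambda^* \|u\|_a^2 + (\lambda + \nu) \|v\|_{L^2}^2 \;=\; a[u, f] + \langle g, v\rangle_{L^2},
\end{equation*}
after invoking $\langle \cL u, f\rangle_{L^2} = a[u, f]$ from Lemma~\ref{lem:sesquiform}. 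Cauchy--Schwarz in $a[\cdot,\cdot]$ and in $L^2$ gives $|a[u,f]| \leq \|u\|_a \|f\|_a$ and $|\langle g,v\rangle_{L^2}| \leq \|g\|_{L^2}\|v\|_{L^2}$, and a final vector Cauchy--Schwarz collapses the sum into $\|U\|_Z \|F\|_Z$, producing \eqref{eq:bound0}.

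For part (b), I would set $W := \cA_\perp^{-1}U = (w_1, u) := (-\nu\cL_\perp^{-1}u - \cL_\perp^{-1}v,\, u)$ via Lemma~\ref{lem:invA}, so that $(\cA - \lambda)W = F$ decomposes as $u - \lambda w_1 = f$ and $\cL w_1 + (\nu + \lambda)u = -g$. Eliminating $w_1 = (u-f)/\lambda$ and pairing with $\phi \in H^1_\perp$ through the identity $a[w_1,\phi] = \langle \cL w_1, \phi\rangle_{L^2}$ produces the weak relation $a[u,\phi] + \lambda(\nu+\lambda)\langle u,\phi\rangle_{L^2} = a[f,\phi] - \lambda\langle g,\phi\rangle_{L^2}$; the choice $\phi = u$ gives the master identity
\begin{equation*}
\|u\|_a^2 + \lambda(\nu + \lambda)\|u\|_{L^2}^2 \;=\; a[f,u] - \lambda \langle g, u\rangle_{L^2}, \qquad (\star)
\end{equation*}
and the second component of $W = (U - F)/\lambda$ supplies the auxiliary relation $\lambda u = v - g$. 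For \eqref{eq:bound1}, I would multiply $(\star)$ by $\lambda^*$ and replace $|\lambda|^2\|u\|_{L^2}^2$ by $\|v-g\|_{L^2}^2 = \|v\|_{L^2}^2 - 2\Re\langle v,g\rangle_{L^2} + \|g\|_{L^2}^2$, so that the LHS becomes $\lambda^*\|u\|_a^2 + (\nu+\lambda)\|v\|_{L^2}^2$; every residual term is then bounded via Cauchy--Schwarz together with the auxiliary estimates $\|g\|_{L^2}\leq \|v\|_{L^2} + |\lambda|\|u\|_{L^2}$ and $\|u\|_{L^2} \leq \|u\|_a/\sqrt{\Lambda_0}$, and regrouped into the target factor $C_1(|\lambda|\|u\|_a + |\nu+\lambda|\|v\|_{L^2})\|F\|_Z$. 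For \eqref{eq:bound2}, I would start from \eqref{eq:bound1}, invoke the reverse triangle inequality $|z+w| \geq \bigl||z|-|w|\bigr|$ to extract $\bigl||\lambda|\|u\|_a^2 - |\nu+\lambda|\|v\|_{L^2}^2\bigr|$, and factor this as a difference of squares; combining with the spectral bound $\|u\|_a \geq \sqrt{\Lambda_0}\|u\|_{L^2}$ and the identity $|\lambda|\|u\|_{L^2} = \|v-g\|_{L^2}$ recovers the normalized difference $\|u\|_a - |\nu+\lambda|\Lambda_0^{-1/2}\|v\|_{L^2}$ bounded by $C_2\|F\|_Z$.

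The main obstacle is \eqref{eq:bound2}: unlike \eqref{eq:bound1}, the target is a \emph{signed} difference of norms, so the factorization $x^2 - y^2 = (x-y)(x+y)$ must be executed while simultaneously controlling the complex phases introduced by multiplying $(\star)$ by $\lambda^*$ and linearizing quadratic-in-$F$ residuals (chiefly $\|g\|_{L^2}^2$) through triangle-inequality substitutions. Producing constants $C_1,C_2$ that are uniform in $\lambda \in \C\setminus\{0\}$ requires careful bookkeeping of these lower-order contributions throughout.
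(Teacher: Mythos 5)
Your part (a) and the reduction at the start of part (b) (the explicit form of $\cA_\perp^{-1}U$ from Lemma \ref{lem:invA}, the two scalar equations, the relation $\lambda u = v-g$, and the master identity $(\star)$) are correct and, after elimination, coincide with the paper's computation; indeed $(\star)$ multiplied by $\lambda^*$ is just another grouping of the paper's identity \eqref{eq:prime1}. One caveat on \eqref{eq:bound1}: after expanding $\nld{v-g}^2$, your right-hand side contains (after simplification) the terms $-\lambda\pld{g}{v}$ and $-\nu\nld{g}^2$, and these are \emph{not} individually bounded by $C(|\lambda|\|u\|_a+|\nu+\lambda|\nld{v})\|F\|_Z$ with $C$ independent of $\lambda$: at $\lambda=-\nu$ the solution family $u=0$, $v=g$, $f=-\nu\cL_\perp^{-1}g$ makes the target factor vanish while each term equals $\pm\nu\nld{g}^2$. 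Only their cancellation (equivalently, the regrouping $\nu\lambda^*\pld{g}{u}+(\nu+\lambda)\pld{v}{g}$, which \eqref{eq:prime1} produces directly) yields a $\lambda$-uniform $C_1$, and uniformity in $\lambda$ is what the subsequent resolvent estimates require; so the term-by-term bookkeeping as described must be amended, though this is fixable.

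The genuine gap is \eqref{eq:bound2}: it cannot be derived from \eqref{eq:bound1} together with the spectral bound $\nld{u}\le\Lambda_0^{-1/2}\|u\|_a$ and the identity $|\lambda|\nld{u}=\nld{v-g}$, which is all your argument invokes. The reverse triangle inequality applied to \eqref{eq:bound1} controls $\bigl|\,|\lambda|\|u\|_a^2-|\nu+\lambda|\nld{v}^2\bigr|$, whose difference-of-squares factorization carries the weights $\sqrt{|\lambda|}$ and $\sqrt{|\nu+\lambda|}$, whereas \eqref{eq:bound2} compares $\|u\|_a$ with $|\nu+\lambda|\Lambda_0^{-1/2}\nld{v}$; the two comparisons are commensurate only when $|\lambda|\,|\nu+\lambda|=\Lambda_0$. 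Concretely, whenever $|\lambda|\,|\nu+\lambda|>\Lambda_0$, the scalar data $|\lambda|\|u\|_a^2=|\nu+\lambda|\nld{v}^2$, $\nld{v}\le|\lambda|\Lambda_0^{-1/2}\|u\|_a+\nld{g}$, with $\|F\|_Z$ arbitrarily small, satisfy every inequality you cite, yet $|\nu+\lambda|\Lambda_0^{-1/2}\nld{v}-\|u\|_a=\bigl(\sqrt{|\lambda|\,|\nu+\lambda|/\Lambda_0}-1\bigr)\|u\|_a$ is then arbitrarily large relative to $\|F\|_Z$; hence no Cauchy--Schwarz/triangle-inequality manipulation of those facts can produce \eqref{eq:bound2}. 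The missing ingredient is a second, independent use of the resolvent system: eliminate $g$ to obtain $u+(\nu+\lambda)\cL_\perp^{-1}v=f+\nu\cL_\perp^{-1}g$ (the paper's \eqref{eq:resc}), pair it in $L^2$ with $v$, and combine with the $u$-tested relation to reach $\|u\|_a^2-|\nu+\lambda|^2\pld{v}{\cL_\perp^{-1}v}$ equal to terms bounded by $C(\|u\|_a+|\nu+\lambda|\Lambda_0^{-1/2}\nld{v})\|F\|_Z$, i.e.\ the paper's \eqref{eq:prime2}; then $0\le\pld{v}{\cL_\perp^{-1}v}\le\Lambda_0^{-1}\nld{v}^2$ gives a difference of squares with exactly the weights of \eqref{eq:bound2}, and dividing by the sum factor concludes. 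It is this pairing against $v$, absent from your proposal, that brings $\Lambda_0$ in with the correct homogeneity.
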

\begin{remark}
\label{allinversesok}
Notice that the operator $\cA_\perp^{-1}$ is well defined on $H^1_\perp \times L^2_\perp \subset \ran (\cP)$ (see Remark \ref{remcontention}). Thus we may use the representation formula from Lemma \ref{lem:invA}. Observe that $\cL_\perp^{-1}$ is also well defined on $L_\perp^2$.
\end{remark}
\begin{proof}[Proof of Lemma \ref{lem:res_equations}]
First, we assume that $U, F \in H^1_\perp\times L^2_\perp$ satisfy equation \eqref{resolvent_equa1}, which is equivalent to the system
\[
\begin{aligned}
\lambda u - v &= f,\\
\opl u +(\lambda+\nu)v &= g.
\end{aligned}
\]
Now, we apply $a[u,\cdot]$ to the first equation and $\pld{\cdot}{v}$ to the second equation, yielding
\[
\begin{aligned}
a[u,\lambda u] - a[u,v] &= a[u,f], \\
\pld{\cL u}{v} +(\lambda+\nu)\nld{v}^2 &= \pld{g}{v}.
\end{aligned}
\]
The properties of the sesquilinear form $a[\cdot,\cdot]$ (see, for instance, Lemma~\ref{lem:sesquiform} and \eqref{eq:sesquiform}), imply that  
\[
\begin{aligned}
\lambda^* a[u,u] - a[u,v] &= a[u,f], \\
a[u,v] +(\lambda+\nu)\nld{v}^2 &= \pld{g}{v}.
\end{aligned}
\]
By adding both equations we obtain
\[
\lambda^*\|u\|_a^2 + (\lambda+\nu)\nld{v}^2 = a[u,f]+\pld{v}{g}.
\]
Hence, relation \eqref{eq:bound0} follows by noticing that $\langle U,F \rangle_Z =a[u,f]+\pld{v}{g}$ and by applying Cauchy-Schwarz inequality to the inner product in $Z$. This shows (a).

Let us prove (b). First, notice that relation \eqref{resolvent_equa2} implies that 
\begin{subequations}
\label{eq:system}
\begin{empheq}[left=\empheqlbrace]{align}
     u + \lambda \cL_\perp^{-1}(\nu u+v) &= f, \label{eq:resa}\\
     v-\lambda u &= g, \label{eq:resb}
\end{empheq}
\end{subequations}
inasmuch as $\nu u+v \in L^2_\perp = D(\cL_\perp^{-1})$. By adding and subtracting $\lambda\nu^{-1}v$, equation \eqref{eq:resb} is recast as 
\[
 (\nu+\lambda)v-\lambda(\nu u+v) = \nu g \in L^2_\perp.
\]
Apply $\cL_\perp^{-1}$ to last equation and add equation \eqref{eq:resa} in order to obtain
\begin{equation}
\label{eq:resc}
u+(\nu+\lambda)\cL_\perp^{-1}v =f+ \nu \cL_\perp^{-1}g.
\end{equation}
Also, from equation \eqref{eq:resb} we know that $v-g=\lambda u \in H^1_\perp$. Therefore, 
\[
\lambda^*a[u,u]+(\nu + \lambda)a[\cL_\perp^{-1}v ,v-g]=\lambda^*a[f+\nu \cL_\perp^{-1}g,u]. 
\]
Since $\cL_\perp^{-1}:L^2_\perp \to H^2_\perp$, from \eqref{eq:sesquiform} we get
\begin{equation}\label{eq:resd0}
\lambda^*\|u\|_a^2+(\nu + \lambda)\pld{v}{v-g}=\lambda^*a[f+\nu \cL_\perp^{-1}g,u].
\end{equation}

Now, if we proceed differently and distribute the $L^2$-inner product in equation \eqref{eq:resd0} then we arrive at
\begin{equation}\label{eq:prime1}
\lambda^*\|u\|_a^2 + (\nu+\lambda)\nld{v}^2=\lambda^*a[f 
+\nu\cL_\perp^{-1}g
,u] +(\nu+\lambda)\pld{v}{g}.
\end{equation}
Use identity \eqref{eq:sesquiform} in equation \eqref{eq:prime1}, take its modulus and apply the Cauchy-Schwarz inequality twice. The result is
\[
\begin{aligned}
|\lambda^*\|u\|_a^2 + (\nu+\lambda)\nld{v}^2|&=|\lambda^*a[f,u] 
+\nu\lambda^*\pld{g}{u} +(\nu+\lambda)\pld{v}{g}|,\\
&\leq |\lambda|(\|f\|_a|\|u\|_a+\nu\nld{g}\nld{u}) + |\lambda +\nu|\nld{g}\nld{v},\\
&\leq \max\{1,\nu\}(\|f\|_a+\nld{g})(|\lambda|\|u\|_a +|\lambda +\nu|\nld{v}).
\end{aligned}
\]
Therefore, relation \eqref{eq:bound1} follows. 

Let us now substitute $v-g = \lambda u$ into \eqref{eq:resd0}.  This yields,
\begin{equation}\label{eq:res_aux}
\|u\|_a^2 + (\nu+\lambda)\pld{v}{u}=a[f 
+\nu\cL_\perp^{-1}g
,u].
\end{equation}
Thus, if we take the $L^2$-inner product between $v$ and \eqref{eq:resc} we then obtain
\begin{equation*}
\pld{v}{u} + (\nu+\lambda)^*\pld{v}{\cL_\perp^{-1}v} = \pld{v}{f} + \nu^*\pld{v}{\cL_\perp^{-1}g}.
\end{equation*}
We use this last equation to simplify \eqref{eq:res_aux}, yielding 
\begin{equation}\label{eq:prime2}
\|u\|_a^2 - |\nu+\lambda|^2\pld{v}{\cL_\perp^{-1}v} = a[f,u] +\nu\pld{g}{u}-(\nu+\lambda)\pld{v}{f+\nu\cL_\perp^{-1}g}. 
\end{equation}

Therefore relation \eqref{eq:bound2} follows from \eqref{eq:prime2} by noticing that $\pld{v}{\cL_\perp^{-1}v}\leq \Lambda_0^{-1}\nld{v}^2$ for every $v\in H^1_\perp$ (see Theorem \ref{prev_main_res_L} above).  Indeed, the latter lower bound for the left hand side of \eqref{eq:prime2} implies that
\begin{equation}\label{eq:resz1}
\|u\|_a^2 - |\nu+\lambda|^2\Lambda_0^{-1}\nld{v}^2\leq |\|u\|_a^2 - |\nu+\lambda|^2\pld{v}{\cL_\perp^{-1}v} |.
\end{equation}
Moreover, the right hand side of \eqref{eq:prime2} can be bounded above by
\[
\begin{aligned}
|a[f,u] +\nu\pld{g}{u}-(\nu+\lambda)\pld{v}{f+\nu\cL_\perp^{-1}g}|&\leq  \|f\|_a\|u\|_a +\nu\nld{g}\nld{u}+\\
&\; \; +|\nu+\lambda|\nld{v}(\nld{f}+\nu\nld{\cL_\perp^{-1}g}).
\end{aligned}
\]
Since $\|\cdot \|_a$ and $\nhu{\cdot}$ are equivalent in $H^1_\perp$ (see \eqref{eq:equivalence} at Lemma \ref{lem:sesquiform}) we know that there exists $k>0$ such that $\nld{u}\leq k^{-1}\|u\|_a$ for every $u\in H^1_\perp$. Hence,
\begin{equation}\label{eq:resz2}
\begin{aligned}
|a[f,u] &+\nu\pld{g}{u} - (\nu+\lambda)\pld{v}{f+\nu\cL_\perp^{-1}g}|
\leq \\
&\leq (\|f\|_a + \nu k^{-1}\nld{g})\|u\|_a + |\nu+\lambda|\Lambda_0^{-1/2}\nld{v}(\Lambda_0^{1/2}k^{-1}\|f\|_a+\nu\nld{g})\\
&\leq C(\|u\|_a+|\nu+\lambda|\Lambda_0^{-1/2}\nld{v})(\|f\|_a + \nld{g}),
\end{aligned}
\end{equation}
where $C = \max\{1,\nu k^{-1},\nu,\Lambda_0^{1/2}k^{-1}\} > 0$. The proof is complete by noticing that equation \eqref{eq:bound2} is obtained upon a substitution of \eqref{eq:resz1} and \eqref{eq:resz2} in \eqref{eq:prime2}. This shows (b) and the lemma is now proved.
\end{proof}

\begin{remark}\label{rmk:res_inequalities}
Notice that unless $\|u\|_a=0$ and $\nld{v}=0$, the left hand term of identity \eqref{eq:prime1} vanishes if either, (i) $\lambda\in \R$ and $\nu\nld{v}^2 = -\lambda (\|u\|^2_a+\nld{v}^2)$, or (ii) $\|u\|_a =\nld{v} $ and $\Re \lambda = -\nu/2$. This observation will be helpful later on. 
\end{remark}

Next, we prove the spectral stability of the moving profile with a positive spectral gap. The strategy of proof is based on establishing resolvent-type estimates for the operator around the static N\'eel wall (namely, to estimate $\|(\cA-\lambda \Id)^{-1} \|$ and $\|\cA(\cA-\lambda \Id)^{-1} \|$) which yield, in turn, the location of the spectrum of $\cA_c$ for $|c|$ small enough upon application of the standard perturbation theory for linear operators.

\begin{theorem}[resolvent-type estimates]
\label{lem:spectral_gap}
Let $\nu>0$ be fixed, $\cA_c$ be as in \eqref{evproblem} and  $\cA$, $\zeta(\nu)>0$ be as in Lemma \ref{thm:steady_result}. Also, let $\delta \in(0,\zeta(\nu))$ be fixed and let $\Gamma\subset \C$ denote the square with length side $2\delta$ and center at $\lambda = 0$ as in Lemma \ref{lem:zeroev}. If $c_\delta>0$ is small enough such that Lemma \ref{lem:zeroev} holds then both $\|(\cA-\lambda \Id)^{-1} \|$ and $\|\cA(\cA-\lambda \Id)^{-1} \|$ are uniformly bounded for $\lambda$ in the set
\[
G:=\{\lambda\in \C \,|\, \Re \lambda>-\delta\}\setminus \intconv(\Gamma).
\]
\end{theorem}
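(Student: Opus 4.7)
My plan is to combine the resolvent identities of Lemma \ref{lem:res_equations} with a direct spectral decomposition of $\cA$ into three $\cA$-invariant summands, and to bound the resolvent on each summand separately. The inclusion $G\subset \rho(\cA)$ is immediate from Theorem \ref{thm:steady_result} because $\sigma(\cA)\cap\{\Re\lambda>-\zeta(\nu)\}=\{0\}$, $\delta<\zeta(\nu)$, and $0\in\intconv(\Gamma)$; analyticity of the resolvent map then furnishes uniform control on any bounded portion of $G$, so only the asymptotic behavior as $|\lambda|\to\infty$ on $G$ must be addressed by hand.

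I first isolate two explicit one-dimensional $\cA$-invariant subspaces. The kernel $\mathrm{span}\{\Theta_0\}$ is provided by Theorem \ref{thm:steady_result}, and a direct calculation using $\cL\brtp=0$ yields $\cA(\brtp,-\nu\brtp)=(-\nu\brtp,\nu^2\brtp)=-\nu(\brtp,-\nu\brtp)$, exhibiting $(\brtp,-\nu\brtp)$ as an eigenvector with eigenvalue $-\nu$; consistency with Theorem \ref{thm:steady_result} forces $\zeta(\nu)\leq\nu$ and thus $\delta<\nu$. Using Remark \ref{rmk:H1xL2split} this yields the decomposition
\[
H^1\times L^2 \;=\; \mathrm{span}\{\Theta_0\}\;\oplus\;\mathrm{span}\{(\brtp,-\nu\brtp)\}\;\oplus\;(H^1_\perp\times L^2_\perp),
\]
each summand being $\cA$-invariant. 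On the two one-dimensional summands the resolvent acts as multiplication by $-\lambda^{-1}$ and $-(\nu+\lambda)^{-1}$ respectively; the conditions $\lambda\in G\Rightarrow|\lambda|\geq\delta$ and $|\nu+\lambda|\geq\nu-\delta>0$ dispatch both required estimates there.

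The core of the argument is the bound on $H^1_\perp\times L^2_\perp$. For $F$ in this subspace the direct-sum decomposition above forces $U:=(\cA-\lambda)^{-1}F$ back into $H^1_\perp\times L^2_\perp$, so $\tilde U:=\cA U$ also lies there and satisfies $\cA_\perp^{-1}\tilde U=U$; hence $(\cA-\lambda)\cA_\perp^{-1}\tilde U=F$ and Lemma \ref{lem:res_equations}(b) applies. Writing $p=\|\tilde u\|_a$ and $q=\|\tilde v\|$, estimate \eqref{eq:bound2} produces the near-equality $p=c_1 q+O(\|F\|_Z)$ with $c_1=|\nu+\lambda|\Lambda_0^{-1/2}$. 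Substituting this into \eqref{eq:bound1} yields an effective coefficient $A(\lambda):=\lambda^*c_1^2+(\nu+\lambda)$ whose modulus grows like $|\lambda|^3/\Lambda_0$ as $|\lambda|\to\infty$ (checked by separating real and imaginary parts), and the resulting inequality forces $q=O(\|F\|_Z/|\lambda|)$ and hence $p=O(\|F\|_Z)$, so $\|\cA(\cA-\lambda)^{-1}F\|_Z=\|\tilde U\|_Z\leq C\|F\|_Z$ uniformly on $G$. The companion bound $\|(\cA-\lambda)^{-1}\|\leq C$ on this summand follows from the factorization $(\cA-\lambda)^{-1}=\cA_\perp^{-1}\bigl(\cA_\perp(\cA-\lambda)^{-1}\bigr)$ together with the boundedness of $\cA_\perp^{-1}$ (Lemma \ref{lem:invA}).

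The main technical obstacle is precisely this asymptotic step. A direct attempt through Lemma \ref{lem:res_equations}(a) alone would fail: Remark \ref{rmk:res_inequalities} shows that the identity underlying \eqref{eq:bound0} admits nontrivial solutions of $\lambda^*\|u\|_a^2+(\lambda+\nu)\|v\|^2=0$ along the line $\Re\lambda=-\nu/2$ with $\|u\|_a=\|v\|$, and this line may intersect $G$ when $\delta>\nu/2$, preventing a uniform lower bound on the left-hand side of \eqref{eq:bound0} in terms of $\|U\|_Z^2$. Routing the argument through $\cA_\perp$ deforms the degenerate direction to $\|u\|_a=c_1\|v\|$, and the explicit cubic growth of $|A(\lambda)|$ then absorbs all error terms and yields the desired uniform resolvent estimate on all of $G$.
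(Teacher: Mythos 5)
Your proposal is correct, but it follows a genuinely different route from the paper. The paper proves the two estimates separately: for $\|(\cA-\lambda)^{-1}\|$ it uses the projector $\cP$, the decomposition of Remark \ref{rmk:H1xL2split}, Lemma \ref{lem:res_equations}(a) and the explicit pointwise lower bound of Lemma \ref{lem:aux0_mod}, obtaining a quantitative bound on all of $G$; for $\|\cA(\cA-\lambda)^{-1}\|$ it splits $G$ into $G_1$ (handled by the Feller--Miyadera--Phillips bound), a compact piece $G_3$, and the large-$|\Im\lambda|$ region $G_2$, where Lemma \ref{lem:res_equations}(b) is combined with the rather delicate uniform bound on $M(\phi,\lambda)$ from Lemma \ref{lem:aux3_mod} (itself requiring Lemmas \ref{lem:aux1_mod}--\ref{lem:aux2_mod} and a further splitting $H_1,H_2,H_3$). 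You instead organize $H^1\times L^2$ as the direct sum of $\mathrm{span}\{\Theta_0\}$, $\mathrm{span}\{(\brtp,-\nu\brtp)\}$ (correctly identified as an eigenvector with eigenvalue $-\nu$, a fact the paper uses only implicitly through $-\alpha(\nu+\lambda)=\beta$), and the invariant subspace $H^1_\perp\times L^2_\perp$; you dispose of all bounded portions of $G$ by analyticity/compactness, and for $|\lambda|$ large you run Lemma \ref{lem:res_equations}(b) once, using the cubic growth of $A(\lambda)=\lambda^*|\nu+\lambda|^2\Lambda_0^{-1}+(\nu+\lambda)$ to absorb the error terms, finally recovering the $\|(\cA-\lambda)^{-1}\|$ bound from the $\|\cA(\cA-\lambda)^{-1}\|$ bound via the bounded operator $\cA_\perp^{-1}$ of Lemma \ref{lem:invA}. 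What the paper's route buys is explicit constants (useful when tracking how small $c_\delta'$ must be in Theorem \ref{theospectralstab}); what yours buys is brevity — it bypasses the entire Appendix and the semigroup input — together with a uniform treatment of the unbounded part of $G$ in all directions, and, as you correctly observe, it is insensitive to the degeneracy of the Lemma \ref{lem:res_equations}(a) identity on $\Re\lambda=-\nu/2$, which is relevant since the theorem permits $\delta\in(\nu/2,\zeta(\nu))$ while Lemma \ref{lem:aux0_mod} is stated only for $\delta<\nu/2$. Two small points you should spell out in a full write-up: the invariance of $H^1_\perp\times L^2_\perp$ under $(\cA-\lambda)^{-1}$ follows from uniqueness of the three-way decomposition together with $\lambda\notin\{0,-\nu\}$ on $G$ (not from $\cA$-invariance alone), and the substitution $p=c_1q+O(\|F\|_Z)$ into \eqref{eq:bound1} generates cross terms $2|\lambda|c_1 q|e|$ and $|\lambda|e^2$ that must be (and indeed are) absorbed by $|A(\lambda)|\gtrsim|\lambda|c_1^2$ for $|\lambda|$ large.
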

\begin{proof}
First, we prove that $\|(\cA-\lambda)^{-1}\|$ is uniformly bounded. Let $\widetilde{U}, \widetilde{F}\in H^1\times L^2$  and let $\cP:H^1\times L^2\to (H^1\times L^2)_\perp$ be the projector operator that commutes with $\cA$ and which is defined in \eqref{defofP}.  By the definition of $\cP$ it follows that $\Id-\cP$ also commutes with $\cA$ and $\cA(\Id-\cP)$ that is the null operator. Then, 
\[
\begin{aligned}
(\cA-\lambda)\widetilde{U} &= (\cA -\lambda)(\cP \widetilde{U} +(\Id-\cP)\widetilde{U})\\
&= (\cA -\lambda)\cP \widetilde{U}+(\cA -\lambda)(\Id-\cP)\widetilde{U}\\
&=(\cA -\lambda)\cP \widetilde{U}-\lambda(\Id-\cP)\widetilde{U},
\end{aligned}
\]
and the relation $(\cA-\lambda)\widetilde{U} = \widetilde{F}$  implies that
\[
(\cA -\lambda)\cP \widetilde{U}-\lambda(\Id-\cP)\widetilde{U} = \cP \widetilde{F} + (\Id-\cP)\widetilde{F}.
\]
Last equation splits into the following two equations,
\[
(\cA -\lambda)\cP \widetilde{U} = \cP \widetilde{F}, \quad \mbox{and }-\lambda(\Id-\cP)\widetilde{U} = (\Id-\cP)\widetilde{F}.
\]
Therefore,
\begin{equation}
\label{eq:aux_resxx}
\|\widetilde{U}\|_{H^1\times L^2}\leq \|\cP\widetilde{U}\|_{H^1\times L^2}+\|(\Id-\cP)\widetilde{U}\|_{H^1\times L^2} \leq \left[\|\cP(\cA-\lambda)^{-1}\cP\|+\frac{1}{|\lambda|}\right]\|\widetilde{F}\|_{H^1\times L^2},
\end{equation}
because $\cP$ and $\Id-\cP$ are projector operators. Estimate \eqref{eq:aux_resxx} implies that the proof is complete provided that we verify  that $(\cA-\lambda)^{-1}$, regarded as an operator in $(H^1\times L^2)_\perp$, is uniformly bounded on $G$. To that end, we must prove that there exists a uniform bound $C$ such that, if $\overline{U},\overline{F}\in (H^1\times L^2)_\perp$ satisfy 
\begin{equation}\label{eq:res_xx}
(\cA-\lambda)\overline{U} = \overline{F},
\end{equation}
then they also satisfy the following estimate,
\[
\|\overline{U}\|_{H^1\times L^2}\leq C\|\overline{F}\|_{H^1\times L^2}. 
\]
Once again, by Remark \ref{rmk:H1xL2split}, we know that $\overline{U} = U+\alpha(1,-\nu)  \brtp$ and $\overline{F} = F+\beta(1,-\nu)  \brtp$ for some $U,F\in H^1_\perp\times L^2_\perp$ and $\alpha,\beta\in \C$. Thus, equation \eqref{eq:res_xx} implies that
\[
(\cA-\lambda)U = F, \quad \text{and} \quad -\alpha(\nu + \lambda) = \beta. 
\]
Then, $\alpha = -\beta/(\nu+\lambda)$ and by Lemma \ref{lem:res_equations} (a),
\[
|\lambda^*\|u\|_a^2 + (\lambda+\nu)\nld{v}^2| = \|U\|_Z\|F\|_Z,
\]
for $U =(u,v) $. Substituting $\|u\|_a = \|U\|_Z \cos \phi$ and $\nld{v} = \|U\|_Z \sin \phi$ for $\phi\in [0,\pi/2]$, we have 
\[
\|U\|_Z\leq C(\lambda,\phi) \|F\|_Z,
\]  
where 
\[
C(\lambda,\phi)=\frac{1}{\sqrt{(\Re\lambda+\nu\sin^2\phi)^2 + (\Im \lambda)^2\cos^22\phi}}\leq \frac{\sqrt{\delta^2 + \tfrac{\nu^2}{4}}}{\delta(\tfrac{\nu}{2}-\delta)},
\]
thanks to Lemma \ref{lem:aux0_mod}. In addition, Remark \ref{rmk:H1xL2split} and equation \eqref{eq:normZ_H1xL2} yield
\[
\begin{aligned}
\|\bar{U}\|_{H^1\times L^2}&\leq |\alpha|\|(1,-\nu)\partial_x\brt\|_{H^1\times L^2} + \|U\|_{H^1\times L^2} \\
&\leq \frac{1}{|\nu+\lambda|}\|\beta(1,-\nu)\partial_x\brt\|_{H^1\times L^2} + 
\frac{K\sqrt{\delta^2 + \tfrac{\nu^2}{4}}}{k\delta(\tfrac{\nu}{2}-\delta)}
\|F\|_{H^1\times L^2}\\
&\leq  \left[\frac{1}{|\nu+\lambda|} + \frac{K\sqrt{\delta^2 + \tfrac{\nu^2}{4}}}{k\delta(\tfrac{\nu}{2}-\delta)}
\right](\|\beta(1,-\nu)\partial_x\brt\|_{H^1\times L^2}+\|F\|_{H^1\times L^2})\\
&\leq  \sqrt{2}\left[\frac{1}{|\nu+\lambda|} + \frac{K\sqrt{\delta^2 + \tfrac{\nu^2}{4}}}{k\delta(\tfrac{\nu}{2}-\delta)}
\right]\|\overline{F}\|_{H^1\times L^2}.
\end{aligned}
\] 
Upon substitution of last inequality in \eqref{eq:aux_resxx} we conclude that
\[
\|\widetilde{U}\|_{H^1\times L^2}\leq \left[\frac{\sqrt{2}}{|\nu+\lambda|} + \frac{K\sqrt{\delta^2 + \tfrac{\nu^2}{4}}}{k\delta(\tfrac{\nu}{2}-\delta)}
+\frac{1}{|\lambda|}\right]\|\widetilde{F}\|_{H^1\times L^2}.
\] 
Since the right hand side of the last equation is uniformly bounded on $G$, we obtain that $\|(\cA-\lambda)^{-1}\|$ is uniformly bounded on $G$.

Finally, we prove that $\|\cA(\cA-\lambda)^{-1}\|$ is uniformly bounded on $G$. Due to Theorem \ref{thm:steady_result}, we know that $\cA$ is the infinitesimal generator of a $C_0$-semigroup of quasicontractions and, by virtue of Feller-Miyadera-Phillips' theorem (see, e.g., Engel and Nagel \cite{EN06}, p. 69), we know there exists $w \in \R$ such that for every $\lambda \in \res{\cA}$ with $\Re \lambda>w$ there holds
\begin{equation}\label{eq:FMP}
\|(\cA-\lambda \Id)^{-1}\|_{H^1\times L^2}\leq \frac{1}{\Re \lambda -w}.
\end{equation}
We then choose a positive constant $M_1>w$ and divide the set $G$ into three subsets as follows (see Figure \ref{fig1setG}):
\[
\begin{aligned}
G_1 &= \{\lambda\in G \, : \, \Re \lambda > M_1, \;  |\Im \lambda|\leq\delta\},\\
G_2 &= \{\lambda\in G \, : \, |\Im \lambda|>\delta \}\cup\{\delta(t \pm i) \, : \, t\in (-1,1)\},\\
G_3 &= G\setminus(G_1\cup G_2).
\end{aligned}
\]

\begin{figure}[t]
\begin{center}
\subfigure[Isolation of the origin thanks to the spectral gap.]{\label{fig1contour}\includegraphics[scale=0.6]{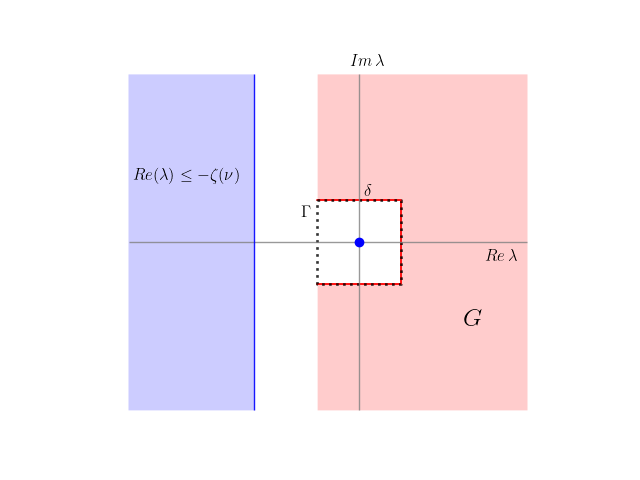}}
\subfigure[Partition of $G$.]{\label{fig1setG}\includegraphics[scale=0.6]{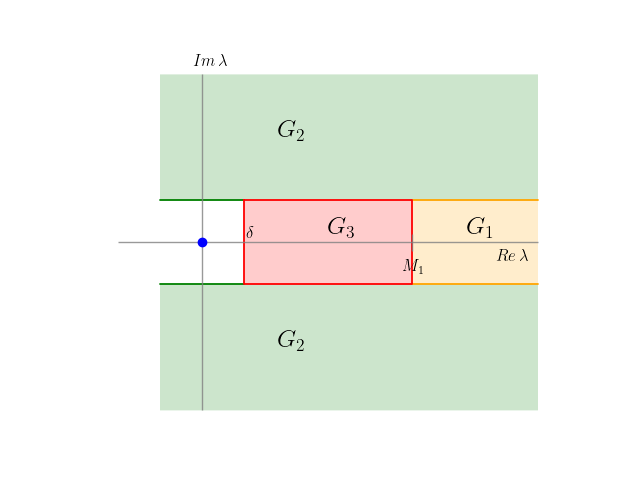}}
\caption{\small{Panel (a) shows the set $\{\lambda\in\C\,|\,\Re \lambda<-\zeta(\nu)\}\cup\{0\}$ which contains $\sigma(\cA)$ (blue region),  the curve $\Gamma$ that splits $\sigma(\cA_c)$ (dotted black curve), and the set $G$ contained in $\rho(\cA_c)$ (red zone). Panel (b) displays the suggested partition of $G$, where  $G_3$ is compact (color online). }
}
\label{fig1}
\end{center}
\end{figure}

Notice that $\|\cA(\cA-\lambda)^{-1}\|$ is uniformly bounded in $G_1$ and $G_3$. Indeed, since $\lambda\mapsto \|\cA(\cA-\lambda)^{-1}\|$ is analytic and $G_3$ is compact, the uniform bound in $G_3$ follows from standard results. To verify the assertion for $G_1$ use the identity  $\cA(\cA-\lambda)^{-1} = \Id+\lambda(\cA-\lambda)^{-1}$ and relation \eqref{eq:FMP} to arrive at
\begin{equation}\label{eq:boundG1}
\|\cA(\cA-\lambda)^{-1}\| \leq 1+|\lambda|\|(\cA-\lambda)^{-1}\| \leq 1+\frac{|\lambda|}{\Re \lambda-w},
\end{equation}
for every $\lambda\in G$. Notice that the right hand side of \eqref{eq:boundG1} is uniformly bounded on $G_1$ and so is $\|\cA(\cA-\lambda)^{-1}\|$.

%
%

It remains to get a uniform bound on $G_2$. By Theorem \ref{thm:steady_result}, we know that there exists a projector operator $\cP:H^1\times L^2\to (H^1\times L^2)_\perp$ that commutes with $\cA$. Moreover, $\cP$ and $(\cA-\lambda)^{-1}$ also commute for every $\lambda\in \rho(\cA)$ (see Kato \cite{Kat80}, p. 173). Then, for every $\widetilde{F}\in H^1\times L^2$, there holds that $\widetilde{F} = \cP \widetilde{F} + (\Id -\cP)\widetilde{F}$ and 
\[
\cA(\cA-\lambda)^{-1}\widetilde{F} = \cA\cP(\cA-\lambda)^{-1}\widetilde{F} =\cA\cP^2(\cA-\lambda)^{-1}\widetilde{F}  = \cP\cA(\cA-\lambda)^{-1}\cP \widetilde{F}. 
\]

Thus, last equation implies that the operator norm of $\cA(\cA-\lambda)^{-1}$ in $H^1\times L^2$ is equal to the norm of the same operator in $(H^1\times L^2)_{\perp}$. In addition, $\cA$ is invertible as a restricted operator on $(H^1\times L^2)_\perp$ (see Lemma \ref{lem:invA}). It is straightforward to verify that the required uniform boundedness of $\|\cA(\cA-\lambda)^{-1}\|$ on $G_2$, regarded as an operator from $(H^1\times L^2)_\perp$ into itself, is readily implied by showing the existence of a constant $C$ independent of $\lambda$ such that for every $\overline{U},\overline{F}\in (H^1\times L^2)_\perp$ satisfying
    \begin{equation}
    \label{eq:vector_sys}
    (\cA-\lambda)\cA_\perp^{-1}\overline{U} = \overline{F},
    \end{equation}
    they also satisfy the estimate 
    \[
    \|\overline{U}\|_{H^1\times L^2}\leq C\|\overline{F}\|_{H^1\times L^2}.
    \] 
    Moreover, Remark \ref{rmk:H1xL2split} implies that $\overline{U},\overline{F}\in (H^1\times L^2)_\perp$ is equivalent to 
    \[
 \overline{U} = U
 + \alpha \begin{pmatrix}
    1\\ -\nu
\end{pmatrix}\brtp, \qquad \mbox{and }\qquad     
\overline{F} = 
F+ \beta \begin{pmatrix}
    1\\ -\nu
\end{pmatrix}\brtp,
    \]
for some $U,F\in H^1_\perp\times L^2_\perp$. Under these assumptions, system \eqref{eq:vector_sys} reduces to
\begin{equation}\label{eq:res_system}
(\cA-\Id)\cA_\perp^{-1}U = F,
\end{equation}
\begin{equation}\label{eq:resz}  
\alpha(1+\lambda\nu^{-1})-\beta =0.   
\end{equation}
From \eqref{eq:resz} it follows that $\alpha=\beta/(1+\lambda\nu^{-1})$, and from Lemma \ref{lem:res_equations} (b) we have that the entries of $U$ and $F$ satisfy conditions \eqref{eq:bound1} and \eqref{eq:bound2}. 

In addition, we assume that $\|u\|_a = \|U\|_Z\cos \phi$ and $\nld{v} = \|U\|_Z\sin \phi$ for $\phi\in[0,\pi/2]$ due to  the expression for $\|U\|_Z$ (see equation \eqref{eq:normZ}). Hence, relations \eqref{eq:bound1} and \eqref{eq:bound2} yield
\begin{subequations}
\begin{empheq}{align}
     \|U\|_Z|\lambda^*\cos^2\phi + (\lambda+\nu)\sin^2\phi|&\leq C_1 (|\lambda|\cos \phi +|\lambda+\nu|\sin\phi)\|F\|_Z , \label{eq:sim_bound1}\\
 	\|U\|_Z|\cos \phi-|\lambda+\nu|\Lambda_0^{-1/2}\sin \phi|& \leq C_2\|F\|_Z. \label{eq:sim_bound2}
\end{empheq}
\end{subequations}
Thus, equations \eqref{eq:sim_bound1} and \eqref{eq:sim_bound2} imply that
\[
\|U\|_Z\leq 2C M(\phi, \lambda)\|F\|_Z,
\]
for some positive constant $C$ and with
\[
M(\phi, \lambda)= \min\left\{\frac{1}{|\cos \phi-|\lambda+\nu|\Lambda_0^{-1/2}\sin \phi|}, \frac{|\lambda| +|\lambda+\nu|}{|\lambda^*\cos^2\phi + (\lambda+\nu)\sin^2\phi|}
\right\}.
\]

Notice that $M(\phi,\lambda)$ is well-defined for every $(\phi,\lambda)\in [0,\pi/2]\times G_2$ since $|\lambda^*\cos^2\phi + (\lambda+\nu)\sin^2\phi|\neq 0$ (see Remark \ref{rmk:res_inequalities}). Moreover by Lemma \ref{lem:aux3_mod} we conclude that $M(\phi,\lambda)$ is uniformly bounded in $[0,\pi/2]\times G_2$. This implies, in turn, that $\|\cA(\cA-\lambda)^{-1}\|$ is uniformly bounded on $G_2$. Therefore, it is uniformly bounded on $G$ and the proof is complete.
\end{proof}

\begin{theorem}[spectral stability with spectral gap]
\label{theospectralstab}
Under the assumptions of Theorem \ref{lem:spectral_gap} there exists $c_\delta'\in (0,c_\delta)$ such that
\begin{equation}
\label{eq:spectrum_location}
\sigma (\cA_c) \subset \{0\}\cup \{\lambda\in \C \,|\,\Re \lambda\leq -\delta\},
\end{equation}
for every speed $c$ such that $|c|<c_\delta'$. 
\end{theorem}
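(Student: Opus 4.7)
The plan is to combine the isolation of the zero eigenvalue established in Lemma \ref{lem:zeroev} with the uniform resolvent bounds of Theorem \ref{lem:spectral_gap} and a standard Neumann series argument from perturbation theory (cf. Kato \cite{Kat80}, Theorem IV.3.17). The goal is to show that, for $|c|$ sufficiently small, every $\lambda$ in the open set
\[
G = \{ \lambda \in \C \, : \, \Re \lambda > -\delta \} \setminus \intconv(\Gamma)
\]
belongs to $\rho(\cA_c)$; since Lemma \ref{lem:zeroev} already gives $\Gamma \subset \rho(\cA_c)$ and $\sigma(\cA_c) \cap \intconv(\Gamma) = \{0\}$, conclusion \eqref{eq:spectrum_location} follows at once.

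First, I would invoke Theorem \ref{lem:spectral_gap} to pick finite constants
\[
K_1 := \sup_{\lambda \in G} \|(\cA-\lambda)^{-1}\|, \qquad K_2 := \sup_{\lambda \in G} \|\cA(\cA-\lambda)^{-1}\|.
\]
By Lemma \ref{lem:growth_bounds}, the perturbation $\cB_c = \cA_c - \cA$ satisfies
\[
\|\cB_c U\|_{H^1 \times L^2} \leq a(c) \|U\|_{H^1\times L^2} + b(c) \|\cA U\|_{H^1\times L^2}, \qquad U \in D(\cA),
\]
with continuous $a(c), b(c) \to 0^+$ as $c \to 0$. Hence, for any $\lambda \in G$ and any $V \in H^1 \times L^2$, taking $U := (\cA - \lambda)^{-1} V$ gives
\[
\|\cB_c (\cA-\lambda)^{-1} V \|_{H^1 \times L^2} \leq \bigl( a(c) K_1 + b(c) K_2 \bigr) \| V \|_{H^1 \times L^2},
\]
so that
\[
\sup_{\lambda \in G} \bigl\| \cB_c (\cA - \lambda)^{-1} \bigr\| \leq a(c) K_1 + b(c) K_2.
\]
Since the right-hand side tends to zero with $c$, I can select $c_\delta' \in (0, c_\delta)$ small enough so that this bound is strictly less than $1/2$ (any constant below $1$ will do) for every $|c| < c_\delta'$.

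Next, for such $c$ and for every $\lambda \in G$, the factorization
\[
\cA_c - \lambda = \bigl( \Id + \cB_c (\cA - \lambda)^{-1} \bigr) (\cA - \lambda)
\]
exhibits $\cA_c - \lambda$ as the composition of $(\cA - \lambda)$, which is a bijection from $D(\cA) = D(\cA_c)$ onto $H^1 \times L^2$ with bounded inverse, with $\Id + \cB_c(\cA - \lambda)^{-1}$, which is invertible on $H^1 \times L^2$ by the Neumann series because its perturbation part has operator norm strictly below $1$. Therefore $(\cA_c - \lambda)^{-1}$ exists and is bounded, i.e.\ $\lambda \in \rho(\cA_c)$. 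This shows $G \subset \rho(\cA_c)$.

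Finally, I would combine the three pieces of information:
\[
\Gamma \subset \rho(\cA_c), \qquad \sigma(\cA_c) \cap \intconv(\Gamma) = \{0\}, \qquad G \subset \rho(\cA_c),
\]
the first two coming from Lemma \ref{lem:zeroev}, and observe that their union exhausts $\{\Re \lambda > -\delta\}$ up to the point $\lambda = 0$. Consequently $\sigma(\cA_c) \cap \{\Re \lambda > -\delta\} = \{0\}$, which is precisely \eqref{eq:spectrum_location}. The only nontrivial content of the argument has already been paid for in Theorem \ref{lem:spectral_gap} (the uniform bound on $G$, which is delicate near the imaginary axis and at infinity) and Lemma \ref{lem:growth_bounds} (relative boundedness of $\cB_c$); the step above is then a routine application of perturbation theory, so no further obstacles are anticipated.
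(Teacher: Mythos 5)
Your proposal is correct and follows essentially the same route as the paper: isolation of the zero eigenvalue via Lemma \ref{lem:zeroev}, the uniform bounds of Theorem \ref{lem:spectral_gap} combined with the relative bound of Lemma \ref{lem:growth_bounds} to make the perturbation uniformly small on $G$, and then perturbation theory to conclude $G\subset\rho(\cA_c)$. The only cosmetic difference is that you spell out the Neumann-series factorization $\cA_c-\lambda=(\Id+\cB_c(\cA-\lambda)^{-1})(\cA-\lambda)$ explicitly, whereas the paper simply cites Kato's Theorem IV.3.17, which encapsulates exactly this argument.
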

\begin{proof}
First we prove that, if both $\|(\cA-\lambda \Id)^{-1} \|$ and $\|\cA(\cA-\lambda \Id)^{-1} \|$ are uniformly bounded for $\lambda \in G$, then \eqref{eq:spectrum_location} holds.	Since the hypotheses of Lemma \ref{lem:zeroev} hold, we know that $\Gamma$ divides $\sigma(\cA_c)$ into two sets: $\{0 \}$, which is contained in contained in $\intconv(\Gamma)$, and $\sigma(\cA_c) \backslash \{ 0 \}$, contained in the exterior of $\conv(\Gamma)$.

It suffices to consider $\lambda \in \sigma(\cA_c) \backslash \{ 0 \}$. For the part of the spectrum contained in the exterior of $\conv(\Gamma)$ we use the fact that the term $a(c_\delta)\|(\cA-\lambda \Id)^{-1} \| + b(c_\delta)\|\cA(\cA-\lambda \Id)^{-1} \|$ is uniformly bounded for $\lambda \in\{\lambda\in \C \,|\, \Re \lambda>\-\delta\}\setminus \intconv (\Gamma)$ and, consequently, Lemma \ref{lem:growth_bounds} implies the existence of $c_\delta'\in (0,c_\delta)$ such that 
\[
a(c)\|(\cA-\lambda \Id)^{-1} \| + b(c)\|\cA(\cA-\lambda \Id)^{-1} \|<1,
\]
for every $c$ with $|c|<c_\delta'$. Therefore, we are now able to apply standard perturbation theory of linear operators, in particular, Theorem IV.3.17 in Kato \cite{Kat80}, p. 214, and to conclude that the whole set $G$ belongs to $\rho(\cA_c)$ for all $|c|<c_\delta'$. The theorem is proved.
\end{proof}

\section{Generation of the semigroup and decay estimates}
\label{secsemiggen}

In this Section, we show that for sufficiently small values of $c$, the operator $\cA_c : H^1\times L^2 \to H^1\times L^2$ is the infinitesimal generator of a $C_0$- semigroup which, when restricted to the appropriate subspace, it becomes exponentially decaying. To accomplish this task, we assume that for every fixed parameter value $\nu > 0$, $\zeta(\nu)>0$ is defined as in Lemma \ref{thm:steady_result}, $\delta \in(0,\zeta(\nu))$ is also kept fixed, and  $c_\delta, c_\delta'$ are the positive constants such that Theorem \ref{theospectralstab} holds. 
 
 \subsection{The adjoint operator}
 
 It is well-known that if $\lambda \in \C$ is an eigenvalue of a closed operator $\cA : \cD \subset H \to H$ then $\lambda^*$ is an eigenvalue of $\cA^*$ (formal adjoint) with the same geometric and algebraic multiplicities (see Kato \cite{Kat80}, Remark III.6.23, p. 184). In our case, since $H^1\times L^2$ is a reflexive Hilbert space, and  $\cA_c : H^1\times L^2\to H^1\times L^2$, with domain $\cD=H^2\times H^1$, is a closed operator for $c$ small enough (see Lemma \ref{lem:growth_bounds} and Corollary \ref{cor:closedness}) it has a formal adjoint which is also closed and densely defined. Moreover, $\cA_c^{**} = \cA_c$ (cf. \cite{Kat80}, Theorem III.5.29, pg. 168). Upon these observations we immediately have the following result.
\begin{lemma}
 \label{lempre1}
$\lambda = 0$ is an isolated, simple eigenvalue of $\cA_c^* : (H^1\times L^2)^* \to (H^1\times L^2)^*$, and there exists an eigenfunction $\Psi_c \in (H^1\times L^2)^*$ such that
\[
 \cA_c^* \Psi_c = 0.
\]
\end{lemma}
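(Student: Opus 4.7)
The plan is to read off the result from standard spectral duality, using the fact (already established in Lemma \ref{lem:zeroev}) that $\lambda=0$ is an isolated simple eigenvalue of $\cA_c$ for all $|c|\leq c_\delta$, together with the closedness of $\cA_c$ on $H^1\times L^2$ with dense domain $D(\cA_c)=H^2\times H^1$ (Corollary \ref{cor:closedness}) and the reflexivity of the base Hilbert space. These conditions guarantee that $\cA_c^*$ is itself closed and densely defined, with $\cA_c^{**}=\cA_c$, so that spectral theory applies symmetrically to $\cA_c$ and its adjoint.

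Concretely, I would proceed as follows. First, invoke the duality statement from Kato \cite{Kat80} (see Theorem III.6.22 and Remark III.6.23, p.~184): for a closed, densely defined operator on a reflexive Banach space, $\sigma(\cA_c^*) = \{\lambda^* : \lambda \in \sigma(\cA_c)\}$, and every conjugate eigenvalue inherits the same geometric and algebraic multiplicities. Because $0=0^*$, this immediately transports the properties ``isolated eigenvalue of algebraic multiplicity one'' from $\cA_c$ to $\cA_c^*$. A non-trivial eigenfunction $\Psi_c \in (H^1\times L^2)^*$ is then obtained as any non-zero element of the one-dimensional kernel of $\cA_c^*$.

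The only point that requires a bit of care is the preservation of algebraic multiplicity and the isolation property under passage to the adjoint, which I would handle via the Riesz--Dunford spectral projector $\cP_c := -\frac{1}{2\pi i}\oint_\Gamma (\cA_c - \lambda)^{-1}\,d\lambda$ associated with the contour $\Gamma$ from Lemma \ref{lem:zeroev}. Since $\Gamma\subset\rho(\cA_c)$ (and, being symmetric with respect to the real axis, its complex conjugate is again $\Gamma$ and lies in $\rho(\cA_c^*)$ by the duality $(\cA_c-\lambda)^{-*} = (\cA_c^* - \lambda^*)^{-1}$), the Dunford projector at zero for $\cA_c^*$ is precisely $\cP_c^*$. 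As the rank of a bounded projector on a reflexive space coincides with that of its adjoint, $\dim\ran\cP_c = \dim\ran\cP_c^* = 1$, which yields both the isolation and the simplicity of $\lambda=0$ as an eigenvalue of $\cA_c^*$. No genuine obstacle arises here; the lemma is essentially a bookkeeping consequence of Lemma \ref{lem:zeroev} and abstract duality, and its content will be used in the next section to characterize the spectral projector for $\cA_c$ onto $\mathrm{span}\{\Theta_c\}$ via pairing against $\Psi_c$.
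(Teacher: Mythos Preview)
Your proposal is correct and follows essentially the same approach as the paper: the paper does not give a separate proof but simply records, in the paragraph preceding the lemma, that closedness of $\cA_c$ (Corollary \ref{cor:closedness}), reflexivity of $H^1\times L^2$, and Kato's duality (Remark III.6.23 and Theorem III.5.29 in \cite{Kat80}) immediately transfer the isolated simple eigenvalue $\lambda=0$ from $\cA_c$ to $\cA_c^*$. Your Riesz--Dunford projector argument is a welcome elaboration of the same idea, but no additional ingredient is needed.
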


It is to be observed that, by Riesz representation theorem, there exists $\widetilde{\Psi}_c \in H^1 \times L^2$ such that
\[
\Psi_c(U) = \langle U, \widetilde{\Psi}_c \rangle_{H^1 \times L^2},
\] 
for all $U \in H^1 \times L^2$. Let us denote the inner product of both eigenfunctions as
\[
R_c := \langle \Theta_c, \widetilde{\Psi}_c \rangle_{H^1 \times L^2}.
\]
For each $c$ small, $\widetilde{\Psi}_c$ (respectively, $\Theta_c$) is the eigenfunction associated to the simple eigenvalue $\lambda = 0$ of the operator $\cA_c^*$ (respectively, $\cA_c$).

\begin{lemma}
 \label{Thetanotzero}
$R_c \neq 0$.
\end{lemma}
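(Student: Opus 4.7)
The plan is to argue by contradiction, exploiting the fact that algebraic simplicity of $\lambda=0$ for $\cA_c$ forbids the existence of a generalized eigenvector (Jordan chain of length two), together with a Fredholm-type identification of $\ran(\cA_c)$ with the annihilator of $\widetilde{\Psi}_c$. Concretely, I would suppose $R_c = 0$ and derive the existence of $U\in D(\cA_c)$ solving $\cA_c U = \Theta_c$, which together with $\cA_c\Theta_c=0$ would give $U\in\ker(\cA_c^2)\setminus\ker(\cA_c)$, contradicting Lemma \ref{lem:zeroev}.

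First I would record that, because $\widetilde{\Psi}_c$ is an eigenfunction of $\cA_c^*$ associated to $\lambda=0$, for every $U\in D(\cA_c)$ one has
\[
\langle \cA_c U,\widetilde{\Psi}_c\rangle_{H^1\times L^2}=\langle U,\cA_c^*\widetilde{\Psi}_c\rangle_{H^1\times L^2}=0,
\]
so $\widetilde{\Psi}_c\in \ran(\cA_c)^{\perp}$. To upgrade this to an equality $\ran(\cA_c)=\mathrm{span}\{\widetilde{\Psi}_c\}^{\perp}$, I need $\ran(\cA_c)$ closed. This comes from the isolation of $\lambda=0$ (Lemma \ref{lem:zeroev}): the Riesz spectral projector
\[
P_c:=-\frac{1}{2\pi i}\oint_{\Gamma}(\cA_c-\lambda\,\Id)^{-1}\,d\lambda
\]
is a bounded projection, algebraic simplicity gives $\ran(P_c)=\mathrm{span}\{\Theta_c\}$, and the restriction of $\cA_c$ to $\ker(P_c)$ has $0$ in its resolvent. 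Hence $\cA_c|_{\ker P_c}$ is invertible onto $\ker(P_c)$, so $\ran(\cA_c)=\ker(P_c)$ is closed. The closed-range theorem then gives $\ran(\cA_c)=\ker(\cA_c^*)^{\perp}$; since the analogous simplicity statement for $\cA_c^*$ yields $\ker(\cA_c^*)=\mathrm{span}\{\widetilde{\Psi}_c\}$, I obtain
\[
\ran(\cA_c)=\bigl\{V\in H^1\times L^2 \,:\, \langle V,\widetilde{\Psi}_c\rangle_{H^1\times L^2}=0\bigr\}.
\]

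Now I can close the argument: if $R_c=\langle\Theta_c,\widetilde{\Psi}_c\rangle_{H^1\times L^2}=0$, the characterization above forces $\Theta_c\in \ran(\cA_c)$, producing $U\in D(\cA_c)$ with $\cA_c U=\Theta_c$ and therefore $\cA_c^2 U=\cA_c\Theta_c=0$ with $U\notin\ker(\cA_c)$. This contradicts the algebraic simplicity of $\lambda=0$ established in Lemma \ref{lem:zeroev}, so $R_c\neq 0$.

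The only delicate point is the closedness of $\ran(\cA_c)$ and the identification with $\mathrm{span}\{\widetilde{\Psi}_c\}^{\perp}$; everything else is formal. For the former I rely on the Riesz projector construction, which is available precisely because we already placed a resolvent contour $\Gamma$ around $\lambda=0$ in Lemma \ref{lem:zeroev}, and the algebraic simplicity (also from that lemma) ensures $\ran(P_c)$ is one-dimensional. For the latter I invoke the closed-range theorem in the Hilbert space $H^1\times L^2$, using that $\cA_c^{**}=\cA_c$ since $\cA_c$ is closed and densely defined (Corollary \ref{cor:closedness}).
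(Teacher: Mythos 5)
Your argument is correct and is essentially the paper's proof in mirror form: where the paper supposes $R_c=0$, deduces $\widetilde{\Psi}_c\in(\ker\cA_c)^{\perp}=\ran(\cA_c^*)$, and contradicts the simplicity of $\lambda=0$ for the adjoint $\cA_c^*$ (Lemma \ref{lempre1}), you deduce $\Theta_c\in(\ker\cA_c^*)^{\perp}=\ran(\cA_c)$ and contradict the simplicity of $\lambda=0$ for $\cA_c$ itself (Lemma \ref{lem:zeroev}). Your Riesz-projector justification of the closedness of $\ran(\cA_c)$ makes explicit a step the paper leaves implicit (its identification $(\ker\cA_c)^{\perp}=\ran(\cA_c^*)$ likewise requires a closed range), so your version is, if anything, slightly more detailed than the paper's short proof.
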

\begin{proof}
By contradiction, suppose that $R_c = 0$. This implies that $\langle \Theta_c, \widetilde{\Psi}_c \rangle_{H^1 \times L^2} = 0$ or, equivalently, that
\[
\widetilde{\Psi}_c \in \big( \ker \cA_c \big)^{\perp} = \ran(\cA_c^*).
\]
Therefore, there exists $\Phi \in D(\cA_c^*)$, $\Phi \neq 0$ (because $\widetilde{\Psi}_c \neq 0$), such that $\cA_c^* \Phi = \widetilde{\Psi}_c$. But this is a contradiction with the fact that $\lambda = 0$ is a simple eigenvalue of $\cA_c^*$.
\end{proof}

Now, we define $\widetilde{X} \subset H^1\times L^2$ as the range of the spectral projection,
\begin{equation}
\label{eq:projector}
\left\{
\arraycolsep=1pt\def\arraystretch{1.5}
\begin{array}{l}
\cP_c : H^1 \times L^2 \to H^1 \times L^2,\\
\cP_c U := U - R_c^{-1}\langle U, \widetilde{\Psi}_c \rangle_{H^1 \times L^2}\,\Theta_c,\\
\widetilde{X} := \ran(\cP_c).
\end{array}
\right.
\end{equation}
\begin{remark}\label{rmk:x}
Notice that $\widetilde{X} = \ran(\cP_c)$ is the range of a bounded linear operator and, consequently, it is a (closed) Hilbert subspace of $H^1\times L^2$.
\end{remark}
 In this fashion we project out the eigenspace spanned by the single eigenfunction $\Theta_c$. We will see in the next Section that for $|c|$ small enough, the block operator $\cA_c$ is a generator of a $C_0$-semigroup of quasicontractions such that, outside this eigenspace, decays exponentially. 
 
 \subsection{Generation of the semigroup}
 
In this Section we apply Lumer-Phillips's theorem and the fact that the operator $\cA_c$ is an $O(c)$-perturbation of the block operator $\cA$, in order to show that $\cA_c$ is the infinitesimal generator of a $C_0$-semigroup for each $c$ small enough.

\begin{theorem}
The operator $\cA_c : H^1\times L^2 \to H^1\times L^2$, with domain $D(\cA_c) = H^2\times H^1$, is the infinitesimal generator of a $C_0$-semigroup of quasicontractions $\{e^{t\cA_c}\}_{t\geq 0}$ for every speed $c$ such that $|c|<c_\delta'$.  Moreover, for each $U \in H^2\times H^1$,
\begin{equation}\label{eq:semigroup_property}
\frac{d}{dt} \big( e^{t\cA_c}U \big) = e^{t\cA_c} \cA_c U = \cA_c(e^{t\cA_c}U).
\end{equation}
\end{theorem}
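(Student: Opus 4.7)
The plan is to exploit the decomposition $\cA_c = \cA + \cB_c$ established in Remark \ref{remimportant} and to transfer the generation property from $\cA$ (Theorem \ref{thm:steady_result}(c)) to $\cA_c$ via a perturbation argument, leveraging the fact that $\cB_c$ is $\cA$-bounded with relative bound vanishing as $c \to 0$ (Lemma \ref{lem:growth_bounds}). Density of $D(\cA_c) = H^2 \times H^1$ in $H^1 \times L^2$ is immediate and closedness has already been settled by Corollary \ref{cor:closedness}, so what remains is to produce resolvent bounds of Hille--Yosida type.

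First, I would use that $\cA$ generates a quasicontraction semigroup to obtain a constant $w \in \R$ such that $\|(\lambda - \cA)^{-1}\| \leq (\lambda - w)^{-1}$ for all real $\lambda > w$; together with the identity $\cA(\lambda - \cA)^{-1} = \lambda (\lambda - \cA)^{-1} - \Id$, this also controls $\|\cA(\lambda - \cA)^{-1}\|$ uniformly on any ray $\lambda \geq \lambda_0 > w$. Plugging these bounds into the $\cA$-bound of Lemma \ref{lem:growth_bounds} yields
\[
\|\cB_c(\lambda - \cA)^{-1}\| \leq a(c)(\lambda - w)^{-1} + b(c)\bigl(1 + \lambda/(\lambda - w)\bigr),
\]
which can be made strictly smaller than any prescribed $\kappa \in (0,1)$ uniformly on such a ray, provided $|c|$ is small (further reducing $c_\delta'$ if necessary).

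With $\|\cB_c(\lambda - \cA)^{-1}\| < \kappa$ on a suitable ray, the Neumann series
\[
(\lambda - \cA_c)^{-1} = (\lambda - \cA)^{-1}\sum_{n=0}^\infty \bigl(\cB_c(\lambda - \cA)^{-1}\bigr)^n
\]
converges and yields an estimate of the form $\|(\lambda - \cA_c)^{-1}\| \leq (1 - \kappa)^{-1}(\lambda - w)^{-1}$ for all $\lambda > w'$, for some $w' > w$. Invoking a standard relatively-bounded perturbation theorem for generators of $C_0$-semigroups (for instance, the corollaries of Kato's perturbation theorem in Engel--Nagel \cite{EN06}) then promotes this to the Hille--Yosida estimate $\|(\lambda - \cA_c)^{-n}\| \leq (\lambda - \omega_c)^{-n}$ with $\omega_c = w + O(|c|)$, and $\cA_c$ is thereby the generator of a quasicontraction $C_0$-semigroup.

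The most delicate point is obtaining the Hille--Yosida constant $M_c = 1$ rather than the naive $M_c = (1 - \kappa)^{-1}$ coming directly from the Neumann series, since only the former delivers the quasicontraction property (as opposed to an arbitrary $C_0$-semigroup); this is typically achieved by absorbing the multiplicative factor into a modest enlargement of the growth exponent $\omega_c$, a step that uses once again the fact that $a(c)$ and $b(c)$ can be made arbitrarily small. Once generation is established, identity \eqref{eq:semigroup_property} is the standard differentiability statement for a $C_0$-semigroup on the domain of its generator, together with the commutation of the semigroup with its generator on $D(\cA_c)$, and requires no further work.
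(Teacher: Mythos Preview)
Your route is genuinely different from the paper's: the paper applies Lumer--Phillips directly, verifying the quasi-dissipativity estimate $\Re\langle \cA_c U,U\rangle_{H^1\times L^2}\le\omega_0\|U\|^2$ (by splitting $\cA_c=\cA+\cB_c$, using quasi-dissipativity of $\cA$, and bounding $\Re\langle\cB_c U,U\rangle$ via Cauchy--Schwarz together with the $\cA$-bound), and then reading off the range condition from the spectral location in Theorem~\ref{theospectralstab}. Lumer--Phillips is the natural device here precisely because quasi-dissipativity is equivalent to the Hille--Yosida estimate with constant $M=1$.

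The gap in your proposal is the step you flag as ``most delicate'': absorbing the Neumann-series prefactor $(1-\kappa)^{-1}$ into a shifted growth exponent. This is not possible. The inequality $M/(\lambda-w)\le 1/(\lambda-w')$ for all large real $\lambda$ forces $M\le 1$; there is no choice of $w'$ that compensates for a multiplicative constant strictly larger than one. Concretely, your own bound gives $\|\cB_c(\lambda-\cA)^{-1}\|\to b(c)$ as $\lambda\to\infty$, so the resolvent estimate you obtain is asymptotically $\|(\lambda-\cA_c)^{-1}\|\le (1-b(c))^{-1}\lambda^{-1}$, which for any $b(c)>0$ is strictly worse than $(\lambda-\omega_c)^{-1}$ for every $\omega_c$. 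Shrinking $|c|$ drives $(1-b(c))^{-1}$ toward $1$ but never reaches it, and the relatively-bounded perturbation theorems in Engel--Nagel you cite produce a $C_0$-semigroup, not a quasicontraction one, absent further structure (such as dissipativity of $\cB_c$). Your argument therefore establishes that $\cA_c$ generates a $C_0$-semigroup with $\|e^{t\cA_c}\|\le Me^{\omega t}$ for some $M\ge 1$---which, incidentally, is all that the subsequent sections of the paper actually require---but not the quasicontraction property as stated. To recover $M=1$ you must pass through dissipativity, which is exactly the paper's approach.
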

\begin{proof}
In order to apply Lumer-Phillips's theorem we need to verify that
\begin{itemize}
\item[(i)] there exists $\omega_0 \in \R$ such that
\[
 \Re \langle \cA_c U,U\rangle_{H^1\times L^2} \leq \omega_0 \|U\|^2_{H^1\times L^2},
\]
for each $U \in D(\cA_c) = H^2\times H^1$;  and that
\item[(ii)] there exists $\tau_0 > \omega_0$ such that $\cA - \tau_0$ is onto;
\end{itemize}
(see, e.g., \cite{ReRo04}, Theorem 12.22, p. 407). From Theorem \ref{theospectralstab}  we already know that if $|c| < c_\delta'$, with $c_\delta' > 0$ sufficiently small, then the whole positive real semi-axis belongs to the resolvent of $\cA_c$, that is,
\[
(0, \infty) \subset \rho(\cA_c)
\]
(see \eqref{eq:spectrum_location}). Hence, $\cA_c - \tau$ is onto for any $\tau > 0$ and (ii) holds for any $\tau_0 > \omega_0$, where $\omega_0$ is the constant found in (i). Therefore, it suffices to prove (i) for some $\omega_0 > 0$.
 
As in Lemma \ref{lem:growth_bounds}, we know that $\cA_c$ is $\cA$-bounded with constants $a(c)$ and $b(c)$, which are continuous and increasing functions of $c$. Moreover, $\cA_c = \cA+\cB_c$, where $\cB_c (u,v)  = (0,-\cS u + 2c\partial_z v)$ and $\cS$ is the nonlocal operator defined in \eqref{eq:S}. Then, for every $U \in D(\cA_c) = H^2\times H^1$, the Cauchy-Schwarz inequality implies that 
\[
\begin{aligned}
\Re \langle\cA_c U,U\rangle _{H^1\times L^2}&=\Re \langle\cA U,U\rangle_{H^1\times L^2}+\Re \langle\cB_c U,U\rangle_{H^1\times L^2}\\
&\leq\Re \langle\cA U,U\rangle_{H^1\times L^2}+ \|\cB_c U\|_{H^1\times L^2}\|U\|_{H^1\times L^2}.
\end{aligned}
\]
Also, we know that $\cB_c$ is $\cA$-bounded and $\cA$ is the generator of a $C_0$-semigroup of quasicontractions. Thus,
\[
\|\cB_cU\|_{H^1\times L^2}\leq a(c)\|U\|_{H^1\times L^2} +  b(c)\|\cA U\|_{H^1\times L^2},
\]
and
\[
\Re \langle\cA_c U,U\rangle _{H^1\times L^2}\leq \omega \langle U,U\rangle _{H^1\times L^2},
\]
for some $\omega\in \R$ and every $U\in H^2\times H^1$. These two observations yield
\[
\Re \langle\cA_c U,U\rangle _{H^1\times L^2}
\leq(\omega+a(c)+b(c)) \| U\|^2_{H^1\times L^2}.
\]
Therefore (i) holds by setting $\omega_0\geq \omega+a(c)+b(c)$. Finally, identity \eqref{eq:semigroup_property} holds by standard theory of $C_0$-semigroups (see, e.g., Pazy \cite{Pa83}). 
\end{proof}

\begin{lemma}
\label{lemma:Pcommutes}
Let $\cP_c$ be the projection operator given in \eqref{eq:projector}.
Then,  
$e^{t \cA_c}  \cP_c = \cP_c e^{t \cA_c}$ for  all $t \geq 0$.
\end{lemma}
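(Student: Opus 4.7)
The plan is to exploit the fact that $\cP_c$ is the spectral projector associated to the simple isolated eigenvalue $\lambda=0$ of $\cA_c$, with $\Theta_c\in D(\cA_c)$ being the eigenvector of $\cA_c$ and $\widetilde{\Psi}_c$ the (Riesz representative of the) eigenvector of the formal adjoint $\cA_c^*$ for the same eigenvalue. The commutation then reduces to two independent invariance statements.

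First, I would show that the one-dimensional eigenspace is invariant under the semigroup, namely $e^{t\cA_c}\Theta_c=\Theta_c$ for every $t\geq 0$. Since $\Theta_c=(\partial_z\psi,0)\in D(\cA_c)=H^2\times H^1$ satisfies $\cA_c\Theta_c=0$, the derivative identity \eqref{eq:semigroup_property} yields $\frac{d}{dt}\bigl(e^{t\cA_c}\Theta_c\bigr)=e^{t\cA_c}\cA_c\Theta_c=0$, so $e^{t\cA_c}\Theta_c$ is constant in $t$, equal to its value at $t=0$.

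Second, I would show that the linear functional $U\mapsto \langle U,\widetilde{\Psi}_c\rangle_{H^1\times L^2}$ is invariant under the semigroup, i.e.,
\[
\langle e^{t\cA_c}U,\widetilde{\Psi}_c\rangle_{H^1\times L^2}=\langle U,\widetilde{\Psi}_c\rangle_{H^1\times L^2}\qquad\forall\,U\in H^1\times L^2,\ t\geq 0.
\]
For $U\in D(\cA_c)$ the orbit stays in $D(\cA_c)$, so \eqref{eq:semigroup_property} gives
\[
\frac{d}{dt}\langle e^{t\cA_c}U,\widetilde{\Psi}_c\rangle_{H^1\times L^2}=\langle\cA_c e^{t\cA_c}U,\widetilde{\Psi}_c\rangle_{H^1\times L^2}=\Psi_c(\cA_c e^{t\cA_c}U)=\bigl(\cA_c^*\Psi_c\bigr)(e^{t\cA_c}U)=0,
\]
by Lemma \ref{lempre1}. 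Hence the pairing is constant on $D(\cA_c)$. For general $U\in H^1\times L^2$, I extend by density of $D(\cA_c)=H^2\times H^1$ in $H^1\times L^2$ combined with the strong continuity and local boundedness of $\{e^{t\cA_c}\}_{t\geq 0}$ in operator norm.

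Finally, combining the two invariance statements with the explicit form \eqref{eq:projector} of $\cP_c$,
\[
e^{t\cA_c}\cP_c U=e^{t\cA_c}U-R_c^{-1}\langle U,\widetilde{\Psi}_c\rangle_{H^1\times L^2}\,e^{t\cA_c}\Theta_c=e^{t\cA_c}U-R_c^{-1}\langle e^{t\cA_c}U,\widetilde{\Psi}_c\rangle_{H^1\times L^2}\,\Theta_c=\cP_c e^{t\cA_c}U,
\]
for all $U\in H^1\times L^2$. The only technical point worth being careful about is the density argument in the second step, ensuring that the identity extends from $D(\cA_c)$ to the whole base space; everything else is a direct consequence of the eigenfunction equations for $\cA_c$ and $\cA_c^*$ together with the semigroup's differentiation property \eqref{eq:semigroup_property}.
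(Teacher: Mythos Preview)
Your proposal is correct and follows essentially the same approach as the paper: both reduce the commutation to the two invariance facts $e^{t\cA_c}\Theta_c=\Theta_c$ and $\langle e^{t\cA_c}U,\widetilde{\Psi}_c\rangle_{H^1\times L^2}=\langle U,\widetilde{\Psi}_c\rangle_{H^1\times L^2}$, then plug into \eqref{eq:projector}. The only difference is in the second step: the paper invokes reflexivity of $H^1\times L^2$ to conclude that the dual family $\{(e^{t\cA_c})^*\}_{t\geq 0}$ is itself a $C_0$-semigroup with generator $\cA_c^*$, whence $(e^{t\cA_c})^*\widetilde{\Psi}_c=e^{t\cA_c^*}\widetilde{\Psi}_c=\widetilde{\Psi}_c$ directly for all $U$; you instead differentiate the pairing for $U\in D(\cA_c)$ and extend by density. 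Both routes are valid; the paper's is slightly cleaner in that it avoids the density step, while yours avoids citing the sun-dual semigroup machinery.
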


\begin{proof}
In the reflexive Banach space $H^1\times L^2$, weak and weak$^*$ topologies coincide. Therefore, the family of dual operators $\{ (e^{t \cA_c})^*\}_{t\geq 0}$, consisting of all the corresponding formal adjoints in $(H^1\times L^2)^*$, is also a $C_0$-semigroup (cf. \cite{EN00}, p. 44) with infinitesimal generator $\cA_c^*$ (see Corollary 10.6 in \cite{Pa83}). Hence, 
\begin{equation}
\label{eq:deulexp}
    (e^{t \cA_c})^* = e^{t \cA_c^*}.
\end{equation}
Now, let  $U\in H^1\times L^2$. Then from the definition of $\cP_c$ we have 
\[ 
\cP_c \big( e^{t \cA_c} U \big) = e^{t \cA_c} U - R_c^{-1} \langle e^{t \cA_c}U, \widetilde{\Psi}_c \rangle_{H^1 \times L^2} \Theta_c. 
\]
Since $\Theta_c$ and $\widetilde{\Psi}_c$ belong to the kernels of their respective generators, equation \eqref{eq:deulexp} and the standard semigroup theory (cf. \cite{Pa83,EN00}) yield
\[
e^{t \cA_c} \Theta_c = \Theta_c, \quad \text{and} \quad \big(e^{t \cA_c}\big)^*\widetilde{\Psi}_c = e^{t \cA_c^*} \widetilde{\Psi}_c = \widetilde{\Psi}_c.
\]
This implies that
\[
\langle e^{t \cA_c}U, \widetilde{\Psi}_c \rangle_{H^1 \times L^2} \Theta_c = \langle U, \big(e^{t \cA_c}\big)^*\widetilde{\Psi}_c \rangle_{H^1 \times L^2} \Theta_c = \langle U, \widetilde{\Psi}_c \rangle_{H^1 \times L^2} e^{t \cA_c} \Theta_c,
\]
verifying, in this fashion, the identity
\[
\cP_c \big( e^{t \cA_c} U \big) = e^{t \cA_c} \Big( U - R_c^{-1} \langle U, \widetilde{\Psi}_c \rangle_{H^1 \times L^2} \Theta_c \Big) = e^{t \cA_c} \cP_c U,
\]
for all $U\in H^1\times L^2$, as claimed.
\end{proof}

\subsection{Exponential decay}

We denote the restriction of $\cA_c$ in $\widetilde{X}$ by $\widetilde{\cA}_c: \widetilde{X} \to \widetilde{X}$ with domain
\[
 \widetilde{D} := \{U \in D(\cA_c) \cap \widetilde{X} \, : \, \cA_c U \in \widetilde{X} \},
\]
and assignation rule given by
\[
 \widetilde{\cA}_c U := \cA_c U, \qquad U \in \widetilde{D}.
\]
The following lemma shows that $\lambda = 0$ does not belong to $\sigma(\widetilde{\cA}_c)$ and that this operator is the infinitesimal generator of an exponentially decaying $C_0$-semigroup.
\begin{lemma} 
\label{elcoro}
Let $\widetilde{X}$ be the range of $\cP_c$ and let $\widetilde{\cA}_c$ denote the restriction of $\cA_c$ on $\widetilde{X}$ as above. Then, 
\begin{itemize}
\item[\rm{(a)}] \label{item:1_restricted_op} The block operator $\widetilde{\cA}_c$ is a closed, densely defined operator on the Hilbert space $\widetilde{X}$.
\item[\rm{(b)}] \label{item:2_restricted_op} The spectrum of $\widetilde{\cA}_c$ satisfies that
\[
\sigma(\widetilde{\cA}_c)\subset \{\lambda\in\C\,|\, \Re \lambda \leq -\delta\}
\]
\item[\rm{(c)}] \label{item:3_restricted_op} The family of operators $\{e^{t\widetilde{\cA}_c}\}_{t\geq 0}$, defined as 
\[
 e^{t \widetilde{\cA}_c}U := e^{t \cA} U, 
\]
for $U \in \widetilde{X}$ and  $t \geq 0$, is a $C_0$-semigroup of quasicontractions in the Hilbert space $(H^1\times L^2)_\perp$ with infinitesimal generator $\tilde\cA_c$.
\item[\rm{(d)}] \label{item:4_restricted_op} There exists uniform constants $M \geq 1$ and $\omega > 0$, such that
\begin{equation}
\label{lindecay}
\|e^{t \widetilde{\cA}_c} U\|_{H^1\times L^2} \leq M e^{-\omega t} \|U\|_{H^1\times L^2},
\end{equation}
for all $t \geq 0$ and every $U \in (H^1\times L^2)_{\perp}$.
\end{itemize}
\end{lemma}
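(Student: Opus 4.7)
The plan is to realize $\widetilde{\cA}_c$ as the ``stable part'' of $\cA_c$ under the Riesz spectral decomposition associated with the contour $\Gamma$ of Lemma \ref{lem:zeroev}, and then to deduce exponential decay through the Gearhart--Pr\"uss theorem applied on the Hilbert subspace $\widetilde{X}$.

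For part (a), I would first observe that the bounded projection $\cP_c$ commutes with $\cA_c$ on $D(\cA_c)$. Differentiating the identity $e^{t\cA_c}\cP_c U = \cP_c e^{t\cA_c}U$ from Lemma \ref{lemma:Pcommutes} at $t=0$ for $U\in D(\cA_c)$ yields $\cA_c\cP_c U = \cP_c \cA_c U$ (the right-hand side being differentiable in $t$ because $\cP_c$ is bounded). Hence $\cP_c$ maps $D(\cA_c)$ into itself, $\widetilde{D} = D(\cA_c)\cap\widetilde{X}$, and $\cA_c$ leaves $\widetilde{X}$ invariant. Closedness of $\widetilde{\cA}_c$ on the Hilbert subspace $\widetilde{X}$ (see Remark \ref{rmk:x}) is inherited from the closedness of $\cA_c$ (Corollary \ref{cor:closedness}) together with the closedness of $\widetilde{X}$ in $H^1\times L^2$; density of $\widetilde{D}$ in $\widetilde{X}$ follows by approximating any $V\in\widetilde{X}$ via $\cP_c U_n\to \cP_cV=V$ with $U_n\in D(\cA_c)$. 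Part (c) is then a standard consequence of abstract semigroup theory: since $e^{t\cA_c}$ commutes with $\cP_c$, it leaves $\widetilde{X}$ invariant, and the restriction defines a $C_0$-semigroup of quasicontractions on the Hilbert space $\widetilde{X}$ whose infinitesimal generator is precisely the part of $\cA_c$ in $\widetilde{X}$, i.e.\ $\widetilde{\cA}_c$.

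For part (b), I would invoke the classical spectral decomposition theorem (Kato \cite{Kat80}, Theorem III.6.17) associated with the simple isolated eigenvalue $\lambda=0$. The projection $\Id-\cP_c$ is the Riesz projection onto $\mathrm{span}\{\Theta_c\}$ and $\cP_c$ is the complementary spectral projection onto $\widetilde{X}$; the spectrum of $\cA_c$ splits accordingly, yielding $\sigma(\widetilde{\cA}_c)=\sigma(\cA_c)\setminus\{0\}$. By Theorem \ref{theospectralstab}, this latter set is contained in $\{\lambda\in\C\,:\,\Re\lambda\leq -\delta\}$.

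The main obstacle is part (d), because on a general Banach space the spectral inclusion from (b) does not by itself imply exponential decay of $\{e^{t\widetilde{\cA}_c}\}$ (the spectral bound may be strictly less than the growth bound). Since $\widetilde{X}$ is a closed subspace of the Hilbert space $H^1\times L^2$, the Gearhart--Pr\"uss theorem (cf.\ \cite{EN00}) applies and reduces the task to a uniform resolvent estimate $\sup_{\Re\lambda\geq -\delta'}\|(\widetilde{\cA}_c-\lambda)^{-1}\|<\infty$ for some $\delta'\in(0,\delta)$. To obtain this I would transfer the static resolvent bounds of Theorem \ref{lem:spectral_gap} to $\cA_c$ via the identity
\[
(\cA_c-\lambda)^{-1} = (\cA-\lambda)^{-1}\bigl(\Id+\cB_c(\cA-\lambda)^{-1}\bigr)^{-1},
\]
combined with Lemma \ref{lem:growth_bounds}: for $|c|$ sufficiently small,
\[
\|\cB_c(\cA-\lambda)^{-1}\| \leq a(c)\|(\cA-\lambda)^{-1}\| + b(c)\|\cA(\cA-\lambda)^{-1}\| \leq \tfrac{1}{2},
\]
uniformly on $G=\{\Re\lambda>-\delta\}\setminus\intconv(\Gamma)$, so a Neumann series provides uniform bounds on $\|(\cA_c-\lambda)^{-1}\|$ on $G$. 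Restricting to $\widetilde{X}$, using that $\cP_c$ commutes with the resolvent and that $0\in\rho(\widetilde{\cA}_c)$ by (b), the bound extends across the hole around the origin to the entire half-plane $\{\Re\lambda\geq -\delta'\}$ for any $\delta'\in(0,\delta)$, because $(\widetilde{\cA}_c-\lambda)^{-1}$ is analytic there. The Gearhart--Pr\"uss theorem then delivers constants $M\geq 1$ and $\omega\in(0,\delta)$ for which \eqref{lindecay} holds, concluding the proof.
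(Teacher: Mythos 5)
Your proposal is correct and follows essentially the same route as the paper: closedness and density inherited from $\cA_c$ and the closed subspace $\widetilde{X}$, the Riesz/spectral decomposition associated with the simple isolated eigenvalue $\lambda=0$ for part (b), restriction of the quasicontraction semigroup for part (c), and Gearhart--Pr\"uss combined with uniform resolvent bounds for part (d). Your Neumann-series step transferring the static estimates of Theorem \ref{lem:spectral_gap} to $\|(\cA_c-\lambda)^{-1}\|$ via Lemma \ref{lem:growth_bounds} simply makes explicit what the paper invokes through Kato's perturbation theorem, and the remaining compactness/analyticity argument near the origin matches the paper's treatment of the set $K$.
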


\begin{proof}
Since $\widetilde{\cA}_c$ is a densely defined operator then its closedness is inherited from the closedness of $\cA_c$ and the closedness of the subspace $\widetilde{X}$. This shows (a).

We now prove (b). By the spectral decomposition theorem we know that $\sigma(\cA_\perp)\subset \sigma(\cA)$ (cf. \cite{Kat80}), but $0\notin \sigma(\cA_\perp)$ because $\cP \Theta = 0$ and $\Theta\neq 0$  so that $\Theta \notin \widetilde{X}$. Therefore, by Theorem \ref{lem:spectral_gap}, we have that $\sigma (\widetilde{\cA}_c) \subset \sigma (\cA_c)\setminus\{0\} \subset \{\lambda \in \C \, : \, \Re  \lambda \leq - \delta \}$ as claimed.

By Lemma \ref{lemma:Pcommutes} and Remark \ref{rmk:x}, we conclude that $\widetilde{X}$  is an $e^{t \cA}$-invariant closed Hilbert subspace of $H^1 \times L^2$. Therefore, property (c) readily follows from a direct application of classical results from semigroup theory (see Section 2.3 of \cite{EN00}, p. 61). Thus, $\widetilde{\cA}_c$ is the infinitesimal generator of the ``restricted'' semigroup $\{e^{t \cA_\perp}\}_{t\geq 0}$.

Finally, (d) follows by an application of Gearhart-Pr\"uss theorem and since the operator $\widetilde{\cA}_c$ is spectrally stable (see (b)), it remains to prove that  
\begin{equation}
    \label{lemF6}
 \sup_{\Re \lambda > 0} \| (\lambda-\widetilde{\cA}_c)^{-1} \|_{\widetilde{X} \to \widetilde{X}} <  \infty,
\end{equation}
for every $\lambda\in \{\lambda\in\C \|\ \Re \lambda >0\}$ holds. Notice that from (b) we know that the mapping $\lambda\mapsto \|(\lambda-\widetilde{\cA}_c)^{-1}\|_{\widetilde{X} \to \widetilde{X}}$ is continuous, hence it is uniformly bounded in the compact set $K:=\{\lambda \in\C\,|\, 0\leq \Re\ \lambda\leq\delta, \, \wedge \, |\Im \lambda |\leq \delta\}$. By Theorem \ref{lem:spectral_gap} we know that $\|(\lambda-\cA_c)^{-1}\|$ is uniformly bounded in $\{\lambda\in\C \, : \, \Re \lambda >0\}\setminus K$; hence, the proof is complete by noticing that 
\[
\|(\lambda-\widetilde{\cA}_c)^{-1}\|_{\widetilde{X} \to \widetilde{X}} \leq \|(\lambda- \cA_c)^{-1}\|_{X \to X}
\]
since $\widetilde{X}\subset X = H^1\times L^2$ and $\widetilde{\cA}_c = \cA_c$ on $\widetilde{D}$ by definition.
\end{proof}

We are now ready to prove the main result of the paper: the nonlinear stability of moving N\'eel wall's phase for $|c|$ sufficiently small.

\section{Nonlinear (orbital) stability of moving N\'eel walls} 
\label{sec:nonlinear_stability}

In this Section, we establish the nonlinear (orbital) stability of the moving N\'eel wall, that is, the decay in time of solutions $\theta(x,t)$ to the Cauchy problem \eqref{reddyneq}, provided they exist, for initial conditions close to the traveling N\'eel wall profile $\psi(z)$. This is accomplished by recasting equation \eqref{reddyneq} as a new equation that underlies the traveling N\'eel wall profile $\psi(z)$ as an stationary point. 

By using the moving frame variables $(z,t)$, with $z = x-ct$, the differential equation for the N\'eel wall phase $\theta(x,t) = \widetilde{\theta}(z,t)$ induces the following differential equation on $\widetilde{\theta}$:
\begin{equation}\label{eq:movil}
\partial_t^2\widetilde{\theta} -2c\partial_{tz}\widetilde{\theta} + c^2\partial_z^2\widetilde{\theta}+\nu\partial_t \widetilde{\theta}-c\nu\partial_z \widetilde{\theta} + \nabla \cE(\widetilde{\theta}) =H\cos(\widetilde{\theta}).
\end{equation}
To ease the notation, we drop out all the tilded variables in equation \eqref{eq:movil} from now on.
Notice that the equation \eqref{eq:movil} is also invariant under translations in the variable $z$. Moreover, $\theta = \psi(z)$ is a stationary solution since $\psi$ is solution to \eqref{travelling_profile}. By defining the auxiliary variable $\varphi=\partial_t \theta$, equation \eqref{eq:movil} is recast as the following vector evolution equation 
\begin{equation}
\label{eq:FOnlpert}
\left\{
\begin{aligned}
\partial_t W &= F(W),  & z &\in \R, \, t > 0, \\ 
W(z,0)&=W_0(z), & z &\in \R,  
\end{aligned}\right.
\end{equation}
where $W = (\theta,\varphi)$,  $W_0 = (\theta_0, v_0)$ is the initial datum and 
\begin{equation}\label{eq:F}
F(W) = 
\begin{pmatrix}
\varphi\\
2c\partial_{z}\varphi - c^2\partial_z^2\theta-\nu\varphi+c\nu\partial_z \theta - \nabla \cE(\theta) +H\cos(\theta)
\end{pmatrix}.
\end{equation}
Due to the existence of the stationary state $\psi$ and the translation invariance of equation \eqref{eq:movil}, the term $F$ satisfies that for every $s\in \R$
\begin{equation}\label{F-invariance} 
F(\phi(s)) = 0, \quad \mbox{where}\quad \phi(s) =\begin{pmatrix}\psi(\cdot+s)\\0\end{pmatrix}.    
\end{equation} 
Thus, differentiation with respect to $s$ readily implies that $\lambda = 0$ is a eigenvalue of the derivative $DF$ of $F$. In addition, the linearization of \eqref{eq:FOnlpert} around the time-independent solution $\phi(s)$ is
\begin{equation}
\label{eq:FOlpert}
\left\{
\begin{aligned}
\partial_t V &= \cA_c^s V, & z &\in \R, \, t > 0, \\ 
V(z,0)&=V_0(z),  & z &\in \R ,
\end{aligned}
\right.
\end{equation}
where
\[
\cA_c^s : = \begin{pmatrix} 0 & \Id \\-\cL_c^s & 2c\partial_z-\nu\Id\end{pmatrix},
\]
with
\[
\opl^s_c = \ocL_c^s  +c^2\partial^2_z-c\nu\partial_z+H \sin\psi(\cdot+s) \, \Id,\quad \mbox{and} \quad
\ocL_c^s=\left. \frac{d}{d\ep}\nabla\cE\left(\psi(\cdot+s)+\ep u\right)\right|_{\ep=0}.
\]
As before (see \cite{CMMP23}, \S 7), the operators $\cA^s_c:H^1\times L^2\to H^1\times L^2$ and $\cL^s_c:H^1\to L^2$ have dense domains, $D(\cA^s_c)=H^2\times H^1$ and $D(\cL^s_c)=H^2$, respectively. When $s=0$, we identify 
\[
\cA^0_c:= \cA_c, \qquad \mbox{and}\qquad \opl^0_c:=\opl_c.
\]
Indeed, it is not difficult to verify that if $T_l(s)$ denotes the left translation operator then it is an $H^1$-isometry and the generator of a $C_0$-semigroup (see \cite{CMMP23}, Section 7 and \cite{EN00}). As a consequence, all spectral results for the operator $\cA_c$ with $s = 0$ from last sections are applicable to the family of operators $\cA_c^s$ with $s \neq 0$. Actually, by Lemma 4.2.1 in \cite{KaPro13}, p. 87, the whole family of operators $\{s \in \R \,: \, \cA_c^s\}$ is \emph{isospectral}. In order to distinguish all the mathematical objects used until \S \ref{secsemiggen}, we add a superscript ``$s$'' for the corresponding mathematical object in the case where $s\neq 0$. For example, the projector operator $\cP_c^s$ and its range $\widetilde{X}^s$ which replace the projector operator $\cP_c$ and its range $\widetilde{X}$ used before for the case $s=0$. For details, see Capella \emph{et al.} \cite{CMMP23}.

Since the whole family of operators is isospectral, we conclude that the spectral gap is also preserved and the existence of a unique solution to the Cauchy linear problem \eqref{eq:FOlpert} in $H^1\times L^2$ is guaranteed by the action of the $C_0$-semigroup of quasicontractions generated by $\cA_c^s$ on the initial conditions. Moreover, when we restrict the admissible solutions of \eqref{eq:FOlpert} to the Hilbert subspace $\widetilde{X}^s$, the Theorem \ref{elcoro} implies the existence of constants $M\geq 1$ and $\widetilde{\omega}>0$ such that
\[
\|e^{t\tilde\cA_c^{s}}V_0\|_{H^1\times L^2}\leq Me^{-\widetilde{\omega} t}\|V_0\|_{H^1\times L^2}
\]
for every $V_0\in \widetilde{X}^s$. It is important to point out that the decaying rate bound $\widetilde{\omega}$ also is \emph{ independent of $s$} because it only depends on the spectral gap and the $H^1$ and $L^2$ norms of $\psi(\cdot + s)$, which remain constant under translations.

Just like in our previous stability analysis \cite{CMMP23}, the proof of Theorem \ref{maintheorem} follows from a direct application of the following abstract nonlinear stability result.

\begin{theorem}[Lattanzio \emph{et al.} \cite{LMPS16}]
\label{LPstability}
Let $X$ be a Hilbert space and $I\subset \R$ be an open neighborhood of $s=0$. Assume that $F:\cD\subset X \rightarrow X$ and $\phi:I\subset \R\rightarrow \cD$ satisfies $F(\phi) = 0$. Suppose that $\cP^s$ is the projector onto $\{\phi'(s)\}^\perp_X$ and that there exist positive constants $C_0,s_0,M,\omega$ and $\gamma$ such that
\begin{itemize}
\item[\rm{(A$_1$)}]\label{H1} for every solution $V = V(t,V_0,s)$ to \eqref{eq:FOlpert}, there holds
\[
\|\cP^s V(t,V_0,s)\|_X\leq C_0e^{-\omega t}\|\cP^s V_0\|_X,
\] 
\item[\rm{(A$_2$)}]\label{H2} $\phi$ is differentiable at $s = 0$ with 
\[
 \|\phi(s) -\phi(0)-\phi'(0)s\|_{X}\leq C_0|s|^{1+\gamma},
\]
for $|s|<s_0$; and,
\item[\rm{(A$_3$)}]\label{H3} $F$ is differentiable at $\phi(s)$ for every $s\in(-s_0,s_0)$ with 
\begin{equation}
\label{eq:H3}
 \|F(\phi(s)+W) -F(\phi(s))-DF(\phi(s))W\|_{X}\leq C_0\|W\|_{X}^{1+\gamma},
 \end{equation}
for  $\|W\|_{X}\leq M$.
\end{itemize} 
Then there exists $\ep>0$ such that for any $W_0\in B_\ep(\phi(0)) \subset X$ there exists $s \in I$  and a positive constant C for which the solution $W(t;W_0)$ to the nonlinear system \eqref{eq:FOnlpert} satisfies 
\[ 
\|W(t,W_0)-\phi(s)\|_{X}\leq C\ \|W_0-\phi(0)\|_{X}\ e^{-\omega t}.
\]
\end{theorem}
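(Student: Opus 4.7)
The plan is to combine the linear exponential decay from (A$_1$) with a time-dependent modulation of the translation parameter $s(t)$ that absorbs the neutral mode $\phi'(s)$ of $DF(\phi(s))$, and then to close a fixed-point/Gronwall argument using the superlinear remainder estimate (A$_3$) together with (A$_2$) to handle the initial shift.

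First, I would set up an orthogonal decomposition via the implicit function theorem. For $W\in X$ close to $\phi(0)$, the scalar map $s\mapsto \langle W-\phi(s),\phi'(s)\rangle_X$ vanishes at $(W,s)=(\phi(0),0)$ and has non-zero $s$-derivative $-\|\phi'(0)\|_X^2$ there, so it defines implicitly a unique $s=s(W)\in I$ with $W-\phi(s(W))\in \{\phi'(s(W))\}^\perp_X = \ran\cP^{s(W)}$. Applying this at each time to the solution $W(t)$ of \eqref{eq:FOnlpert} yields
\[
W(t)=\phi(s(t))+V(t),\qquad V(t)\in\ran\cP^{s(t)},
\]
with $s\in C^1$ as long as $\|V(t)\|_X$ remains small.

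Second, I would derive the equation satisfied by $V$ and the modulation ODE. Differentiating the decomposition and using $F(\phi(s))=0$,
\[
\partial_t V = DF(\phi(s))V-\dot s\,\phi'(s)+R(V,s),\qquad R(V,s):=F(\phi(s)+V)-DF(\phi(s))V,
\]
where $\|R(V,s)\|_X\leq C_0\|V\|_X^{1+\gamma}$ by (A$_3$). Pairing with $\phi'(s)$ and using $DF(\phi(s))\phi'(s)=0$ (from differentiating $F(\phi(s))=0$ in $s$) together with $V\perp\phi'(s)$ produces an ODE $\dot s=\mathcal G(V,s)$ with $|\mathcal G(V,s)|\leq C\|V\|_X^{1+\gamma}$.

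Third, I would apply $\cP^{s}$ to the equation for $V$ (killing the $\dot s\,\phi'(s)$ term) and use Duhamel's formula. By (A$_1$), the constants $M,\omega$ in the estimate $\|e^{tDF(\phi(s))}\cP^s V_0\|_X\leq Me^{-\omega t}\|\cP^s V_0\|_X$ are uniform in $s$ (as the family $\{DF(\phi(s))\}_s$ is isospectral by the translation invariance mentioned just before the statement). This yields the integral inequality
\[
\|V(t)\|_X\leq M e^{-\omega t}\|V(0)\|_X+CM\int_0^t e^{-\omega(t-\tau)}\|V(\tau)\|_X^{1+\gamma}\,d\tau.
\]
A standard continuity/bootstrap argument in the weighted norm $\sup_{t\geq 0}e^{\omega' t}\|V(t)\|_X$ for some $0<\omega'<\omega$ shows that, provided $\|V(0)\|_X$ is sufficiently small, $\|V(t)\|_X\leq C\|V(0)\|_X\,e^{-\omega' t}$ for all $t\geq 0$. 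Integrating the modulation ODE, $s(t)$ is Cauchy and converges to some $s_\infty$ with $|s(t)-s_\infty|=O(e^{-\omega'(1+\gamma)t})$. Hypothesis (A$_2$) guarantees $\|V(0)\|_X=\|W_0-\phi(s(W_0))\|_X\leq C_0\|W_0-\phi(0)\|_X$, so the conclusion $\|W(t)-\phi(s_\infty)\|_X\leq C\|W_0-\phi(0)\|_X\,e^{-\omega t}$ follows by the triangle inequality.

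The main obstacle is the $s$-dependence of both the semigroup and the projector $\cP^s$: the Duhamel representation strictly speaking is written with respect to $DF(\phi(s(t)))$, whose $s(t)$ varies in time. One handles this by either freezing $s$ at each time and treating the variation as a higher-order forcing (of size $|\dot s|\lesssim \|V\|_X^{1+\gamma}$, hence absorbable), or by exploiting the isospectrality and the smooth dependence of the spectral projector on $s$ to reduce to a fixed reference operator. Making $M$ and $\omega$ uniform in $s$ is what allows the contraction/bootstrap to close.
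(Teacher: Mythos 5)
First, a point of comparison: the paper does not prove this statement at all — it is imported verbatim from Lattanzio \emph{et al.} \cite{LMPS16} (an abstract Hilbert-space version of Sattinger's argument), and the remark following it describes that proof: the shift $s$ is kept \emph{fixed in time}, the perturbation $W(t)-\phi(s)$ is treated by the variation-of-constants formula for the single frozen generator $DF(\phi(s))$, and the asymptotic shift is then determined by a fixed-point argument in which (A$_2$) serves to invert, to leading order, the map $s\mapsto \phi(s)-\phi(0)$ along the one-dimensional neutral direction. Your proposal instead uses a time-dependent modulation $s(t)$, which is a legitimate alternative strategy in general, but as a proof of the abstract theorem \emph{under hypotheses (A$_1$)--(A$_3$) only} it has genuine gaps, and the fixed-shift route of \cite{LMPS16} is designed precisely to avoid them.

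Two concrete problems. (i) Your modulation ODE bound $|\dot s|\leq C\|V\|_X^{1+\gamma}$ does not follow from pairing with $\phi'(s)$: the linear term contributes $\langle DF(\phi(s))V,\phi'(s)\rangle_X=\langle V,[DF(\phi(s))]^*\phi'(s)\rangle_X$, and $\phi'(s)$ is a null vector of $DF(\phi(s))$, \emph{not} of its adjoint; since $\cP^s$ is only the orthogonal projector onto $\{\phi'(s)\}^\perp_X$ (not a spectral projector built from the adjoint eigenfunction — compare $\widetilde{\Psi}_c\neq\Theta_c$ and formula \eqref{eq:projector} in the paper), orthogonality $V\perp\phi'(s)$ does not kill this term, which is generically $O(\|V\|_X)$, not superlinear. (ii) The step you yourself flag as ``the main obstacle'' is exactly where the argument is not closed: the Duhamel inequality you write presumes a fixed generator, and both proposed repairs (freezing $s$ and absorbing the error, or invoking smooth dependence of $DF(\phi(s))$ and $\cP^s$ on $s$ and isospectrality) require quantitative $s$-regularity of $DF(\phi(s))$, of $\cP^s$, and $C^1$ (indeed locally Lipschitz-derivative) regularity of $s\mapsto\phi(s)$ — none of which is contained in (A$_1$)--(A$_3$); (A$_2$) only gives differentiability at $s=0$, and (A$_1$) gives decay for each frozen $s$ but no comparison between different values of $s$. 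These extra properties do hold in the paper's application, but they are not available for the abstract statement, whereas the cited proof never needs them because the generator is never time-dependent: the neutral component is eliminated by the choice of the single constant $s$, and (A$_1$) applied to that one frozen operator, together with (A$_3$) in a Gronwall/contraction step, yields the stated decay at the rate $\omega$ (your bootstrap, as written, also only yields a strictly smaller rate $\omega'<\omega$, though this is a minor point compared with (i)--(ii)).
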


\begin{remark}
Theorem \ref{LPstability} is an extension to Hilbert spaces of an early result by Sattinger \cite{Sat76} which establishes nonlinear stability from spectral stability by controlling the growth of the nonlinear terms via the variation of constants formula, but only in the case of a simple translation eigenvalue $\lambda = 0$ for the linearization. In such a case the manifold generated by the traveling wave is one-dimensional, the projection $\cP$ onto the eigenspace has rank one and the necessary nonlinear modulations of the perturbations of the wave pertain to translations alone.
\end{remark}

\subsection{Proof of Theorem \ref{maintheorem}}
Let us choose $\varep > 0$ sufficiently small, more precisely, take $0 < \varep < \min \{ \widetilde{\delta}, \tilde{c}_\delta^{''}\}$ where $\widetilde{\delta} > 0$ is the constant from Theorem \ref{thm:CMO07} and $0 < \tilde{c}''_\delta \ll 1$ is sufficiently small such that $|H| \leq \tilde{c}''_\delta$ implies that $|c| = O(|H|) < c'_\delta$ and the conclusions of Theorems \ref{lem:spectral_gap} and \ref{theospectralstab} and Lemma \ref{elcoro} hold for any arbitrary $\delta \in (0, \zeta(\nu))$. Therefore, the proof of Theorem~\ref{maintheorem} follows from Theorem~\ref{LPstability} upon verification of assumptions 
\hyperref[H1]{(A$_1$)} thru \hyperref[H3]{(A$_3$)}. Let $X = H^1\times L^2$, $D := H^2\times H^1$, $F$ as in \eqref{eq:F} and $\phi(s)$ as in \eqref{F-invariance}. Hence $F(\phi(s)) = 0$ for every $s\in\R$ by \eqref{F-invariance}. Moreover, condition 
\hyperref[H1]{(A$_1$)} is satisfied for the projectors $\cP_c^s$ due to the isospectrality of the family of block operators $\{\cA_c^s:H^1\times L^2\to H^1\times L^2 \, : \, s \in \R\}$ and by virtue of Theorem \ref{elcoro}.

In order to verify 
\hyperref[H2]{(A$_2$)}, first notice that since $\psi\in H^2$ is a real-valued smooth function, then there holds 
\[
\|\phi(s)-\phi(0)-\phi'(0)s\|_{H^1\times L^2} = \|\psi(\cdot + s)-\psi-(\partial_z\psi) s\|_{H^1}. 
\]
By the Taylor series' remainder integral representation and by Jensen's inequality, we have
\[
\begin{aligned}
\nld{\psi(\cdot +s)-\psi-(\partial_{z}\psi) s}^2 
&\leq \int_{\R}s^4\left|\int_0^1 (1-t)\, \partial^2_{z}\psi(x+ts)\, dt\right|^2dx\\
&\leq s^4 \int_\R\int_0^1 (1-t)^2\,\left( \partial^2_{z}\psi(x+ts)\right)^2\, dt\,dx.
\end{aligned}
\]
Now, by changing the order of integration, and recalling that every translation is a $L^2$-isometry, we get
\[
    \nld{\psi(\cdot +s)-\psi-(\partial_{z}\psi)s}^2 
    = s^4\nld{ \partial^2_{z}\psi}^2 \int_0^1(1-t)^2\,dt.
\]
A similar argument for $\partial_{z}\psi$ yields
\[
    \nld{\partial_{z}\psi(\cdot +s)-\partial_{z}\psi-(\partial^2_z\psi) s}^2 
    = s^4\nld{ \partial^3_{z}\psi}^2 \int_0^1(1-t)^2\,dt.
\]
Therefore, 
\[
\|\phi(s)-\phi(0)-\phi'(0)s\|_{H^1\times L^2} \leq \frac{s^2}{\sqrt{3}} \nhu{\partial_{z}^2 \psi},
\]
and \hyperref[H2]{(A$_2$)} follows with $\gamma=1$. 

Now let $W=(w_1,w_2) \in H^2\times H^1$ and for any $s \in \R$ let us denote the $s$-left translation of the traveling N\'eel wall profile, namely, $\psi(\cdot+s)$, by $\psi_s$. Hence,
\[
\begin{aligned}
F(\phi(s)+W) &=
\begin{pmatrix}
\omega_2\\
2c\partial_{z}\omega_2 - c^2\partial_z^2(\psi_s+\omega_1)-\nu\omega_2+c\nu\partial_z(\psi_s+\omega_1) - \nabla \cE(\psi_s +\omega_1) +H\cos(\psi_s +\omega_1)
\end{pmatrix},\\
F(\phi(s)) &= 
\begin{pmatrix}
0\\
 - c^2\partial_z^2\psi_s+c\nu\partial_z \psi_s - \nabla \cE(\psi_s) +H\cos \psi_s
\end{pmatrix},\\
DF(\phi(s))W &= \cA_c^s W = \begin{pmatrix}
    w_2 \\ 2c\partial_{z} w_2-\nu w_2- \cL_c^s w_1
\end{pmatrix}.
\end{aligned}
\]
Upon substitution of the expressions for $F(\phi(s)+W)$, $F(\phi(s))$, $\cA_c^sW$, and the left-shifted version of equation \eqref{eq:Lc}, we get
\[
\begin{aligned}
\nwse{F(\phi(\delta)+W) -F(\phi(\delta))-DF(\phi(\delta))W} &\\
&\hspace{-7cm}=\nld{\nabla\cE(\psi_s)-\nabla \cE(\psi_s +w_1)+H\cos(\psi_s+\omega_1)  -H\cos\psi_s+\cL_c^s w_1-c^2\partial_z^2 w_1+c\nu\partial_z \omega_1},\\
&\hspace{-7cm}=\nld{\nabla\cE(\psi_s)-\nabla \cE(\psi_s +w_1)+\ocL_c^s w_1+H\cos(\psi_s+\omega_1)  -H\cos\psi_s+H\sin \psi_c  w_1},\\
&\hspace{-7cm}\leq \nld{\nabla\cE(\psi_s)-\nabla \cE(\psi_s +w_1)+\ocL_c^s w_1}+\nld{H\cos(\psi_s+\omega_1)  -H\cos\psi_s+H\sin \psi_c  w_1},
\end{aligned}
\]
where the last equality follows from the triangle inequality. The first term on the right hand side of last inequality satisfies the estimate
\[
\nld{\nabla\cE(\psi_s)-\nabla \cE(\psi_s +w_1)+\ocL_c^s w_1}\leq C \|W\|_{H^1\times L^2}^2,
\]
for some constant $C>0$. Since the traveling N\'eel wall profile phase $\psi$ is as smooth as the static N\'eel wall profile phase $\brt$, the proof of this statement is the same as the proof performed on bounding the term $\nld{\nabla\cE(\brt_\delta)-\nabla \cE(\brt_\delta +w_1)+\cL^\delta w_1}$ from Theorem 2.3 in \cite{CMMP23}, word-by-word.

Finally,  the inequality $|\cos (\psi_s+w_1)-\cos\psi_s+w_1\cos\psi_s| \leq \tfrac{1}{2}w_1^2$ and Sobolev's embedding theorem yield
\[
\begin{aligned}
\nld{H\cos(\psi_s+\omega_1)  -H\cos\psi_s+H\sin \psi_c  w_1}^2 
&\leq \tfrac{1}{4}|H|^2 \nld{w_1^2}^2 \\
&\leq \tfrac{1}{4}|H|^2 \|w_1\|^2_\infty\nld{w_1}^2\\
&\leq\tfrac{1}{4}|H|^2\nhu{w_1}^4\\
&\leq\tfrac{1}{4}|H|^2\|W\|^4_{H^1\times L^2}
\end{aligned}
\]
Therefore, 
\hyperref[H3]{(A$_3$)} holds for $\gamma=1$ and some $C_0 > 0$ that depends on the magnitude of the external magnetic field $H$. This completes the proof.
\qed

\section{Discussion}
\label{secdiscuss}

In this paper we have proved the nonlinear stability of moving N\'eel walls in ferromagnetic thin films. These structures emerge when a weak external magnetic field is applied. The moving N\'eel wall is described by a wave-type dynamics equation for the phase of the in-plane magnetization. The speed of propagation of the moving wall, $c$, is of order $O(|H|)$, where $|H| \ll 1$ measures the intensity of the external field. After linearizing the model equations around a moving wall's phase, the resulting spectral problem for the perturbation turns out to be an order $O(|c|)$-perturbation of the linearized operator around the static N\'eel wall, which is spectrally stable. The main result relies on the establishment of resolvent-type estimates for the latter operator which, in turn, imply the stability of the linearized operator around the moving N\'eel wall upon application of standard perturbation theory for linear operators. Whence, Lumer-Phillips theorem, the theory of $C_0$-semigroups and a nonlinear iteration argument yield the nonlinear stability result.

The smallness of the applied magnetic field is an important feature to infer both the existence and the stability of a moving domain wall. An important question from the point of view of applications is whether there exists a threshold value for the magnetic field's intensity $H$ for which we lose the stability and/or the existence of the moving wall. If the magnetic field is too strong then we expect the instability to be so violent that we practically never see the moving wall evolve in time. This behaviour is, perhaps, associated to a bifurcation phenomena in which the intensity of the magnetic field plays the role of the bifurcation parameter. This is an open problem that warrants further investigations.

\section*{Acknowledgements}

The work of A. Capella and R. G. Plaza was partially supported by CONAHCyT, M\'exico, grant CF-2023-G-122. The work of L. Morales was supported by CONAHCyT, M\'exico, through the Program ``Estancias Postdoctorales por M\'exico 2022''.

\appendix
\section{Useful estimates}
\label{secappendix}

In this Section we state and prove some useful lemmata which are needed in the proof of the resolvent estimates from Theorem \ref{lem:spectral_gap}.

\begin{lemma}\label{lem:aux0_mod} 
Let $\nu>0$ and $\delta\in (0,\nu/2)$ be fixed. Assume that $\Gamma \subset \C$ is the square of side length $2\delta$ and center at the origin as defined in \eqref{defofcontourGamma}. Also, define $S:[0,\pi/2]\times \C\to \R$ as 
\[
S(\phi,\lambda) := |\lambda^*\cos^2\phi+ (\lambda+\nu)\sin^2\phi|.
\]
Then, for every $(\phi,\lambda)\in[0,\pi/2] \times\{\Re \lambda>-\delta\}\setminus \conv(\Gamma)$  there holds that
\[ 
S(\phi,\lambda)\geq \frac{\delta}{\sqrt{\delta^2 + \tfrac{\nu^2}{4}}}\Big(\frac{\nu}{2}-\delta\Big)>
 0.
\]
Moreover, if $\Im \lambda \neq 0$, then 
\[
S(\phi,\lambda)\geq \frac{\sqrt{2}|\Im\lambda||\Re \lambda+\nu/2|}{\nu/2+|\Im \lambda|}>0. 
\]
\end{lemma}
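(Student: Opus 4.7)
The natural strategy is to parametrize $\lambda = a + i b$ with $a,b \in \R$, $a > -\delta$, reduce $S(\phi,\lambda)^2$ to a quadratic in a single scalar variable, minimize that quadratic over the interval $[0,1]$, and then compare the resulting expression with the claimed bound over the region $\{\Re\lambda > -\delta\}\setminus\conv(\Gamma)$. A direct expansion of $\lambda^*\cos^2\phi + (\lambda+\nu)\sin^2\phi$ gives the real part $a + \nu \sin^2\phi$ and imaginary part $b(\sin^2\phi - \cos^2\phi) = -b\cos(2\phi)$, whence
\[
S(\phi,\lambda)^2 = (a + \nu \sin^2\phi)^2 + b^2 \cos^2(2\phi).
\]
Introducing $y = \cos(2\phi) \in [-1,1]$ (so that $\sin^2\phi = (1-y)/2$) one rewrites this as $S^2 = ((a+\nu/2) - (\nu/2)y)^2 + b^2 y^2$, a convex quadratic in $y$.

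The next step is to compute the interior critical point $y^\ast = (a+\nu/2)(\nu/2)/(\nu^2/4+b^2)$ and note that $y^\ast > 0$ because $a + \nu/2 > \nu/2 - \delta > 0$, while $y^\ast \leq 1$ is equivalent to $2b^2 \geq \nu a$. A short algebraic evaluation yields
\[
S(\phi,\lambda)^2 \;\geq\; \min_{y \in [-1,1]} S^2 \;=\;
\begin{cases}
\dfrac{(a+\nu/2)^2 b^2}{\nu^2/4+b^2}, & 2b^2 \geq \nu a,\\[2mm]
a^2+b^2, & 2b^2 < \nu a \; (\text{so } a > 0),
\end{cases}
\]
the second alternative corresponding to the boundary minimum at $y = 1$.

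For the first claim, I would partition the admissible region into the three sub-cases $a\leq 0$, $0 < a < \delta$, and $a \geq \delta$. When $a \leq 0$ one necessarily has $|b|\geq \delta$ and the critical-point formula applies with $a + \nu/2 \geq \nu/2-\delta$; monotonicity of $y \mapsto y/\sqrt{\nu^2/4+y^2}$ on $y\geq 0$ then forces $S \geq (\nu/2-\delta)\,\delta/\sqrt{\nu^2/4+\delta^2}$, which is exactly the claimed constant. When $0<a<\delta$ with $|b|\geq \delta$ both alternatives must be handled, but in each case the bound is at least as good as the previous one; and when $a\geq \delta$ the trivial estimate $S \geq |\lambda|\geq a\geq \delta$ suffices, after checking once and for all that the target constant is bounded above by $\delta$ (equivalent to $(\nu/2-\delta)^2 \leq \nu^2/4+\delta^2$). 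For the ``moreover'' part, the same interior formula combined with the elementary comparison between $\sqrt{\nu^2/4+b^2}$ and $\nu/2+|b|$ produces the stated quotient.

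The main technical obstacle is the case $0 < a < \delta$ in sub-region (b), where both regimes of the minimum coexist and one must verify that neither falls below $C_0$; this requires handling the crossover at $2b^2 = \nu a$ (where the two expressions coincide) and exploiting the monotonicity in $|b|$ of the interior formula. Everything else is essentially bookkeeping: computing $y^\ast$, its value, and translating the excluded square into constraints on $(a,b)$.
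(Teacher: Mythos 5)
Your reduction of $S^2$ to the convex quadratic $((a+\nu/2)-(\nu/2)y)^2+b^2y^2$ in $y=\cos 2\phi\in[-1,1]$, with explicit minimum $\tfrac{(a+\nu/2)^2b^2}{\nu^2/4+b^2}$ when $2b^2\ge \nu a$ and $a^2+b^2$ otherwise, is essentially the paper's own change of variables (the paper writes $u=-y$ and instead locates the minimum of the two-variable function on the boundary pieces of the region), and your cases $a\le 0$ and $0<a<\delta$ are handled correctly. But the case $a\ge\delta$ is not: the ``trivial estimate'' $S\ge|\lambda|$ is false, since at $\phi=\pi/4$ one has $S=|\Re\lambda+\nu/2|$, which stays bounded while $|\Im\lambda|\to\infty$. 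The conclusion you want there, $S\ge a\ge\delta$, is nevertheless true and follows from your own formulas: in the regime $2b^2\ge\nu a$ the interior minimum is increasing in $b^2$ and at the crossover $2b^2=\nu a$ equals $a(a+\nu/2)\ge a^2$, while in the complementary regime the minimum is $a^2+b^2\ge a^2$. So this is a repairable slip, but as written the step fails.

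The ``moreover'' part is a genuine gap: your elementary comparison goes the wrong way, since $(\nu/2+|b|)^2\le 2(\nu^2/4+b^2)$, so from $S^2\ge\tfrac{(a+\nu/2)^2b^2}{\nu^2/4+b^2}$ you can only deduce $S\ge \tfrac{|b|\,|a+\nu/2|}{\nu/2+|b|}$, i.e.\ the stated bound \emph{without} the factor $\sqrt{2}$. The factor cannot be recovered along this route: whenever $2b^2\ge\nu a$ the exact minimum over $\phi$ equals $\tfrac{(a+\nu/2)^2b^2}{\nu^2/4+b^2}$, which is strictly smaller than $\tfrac{2b^2(a+\nu/2)^2}{(\nu/2+|b|)^2}$ unless $|b|=\nu/2$; e.g.\ $\nu=2$, $\delta=1/2$, $\lambda=1+2i$, $\sin^2\phi=3/10$ gives $S^2=3.2<32/9$, a point inside the admissible set, so the inequality with $\sqrt{2}$ even fails pointwise and no proof of it can succeed. (For comparison, the paper's own argument here relies on the claim that $\min\{S_1^2,S_2^2\}$ is convex with minimum on the crossing set, which is also not correct; the crossing-point computation legitimately yields only the $\sqrt{2}$-free bound, and that weaker bound is all that is needed later in Lemma \ref{lem:aux3_mod}, where constants are immaterial.) Note also that your interior formula covers only $2b^2\ge\nu a$; for the second inequality the boundary regime $2b^2<\nu a$, where the minimum is $a^2+b^2$, would still require a separate (easy) verification.
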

\begin{proof}
First, we let $S_1^2(\phi,\lambda) := (\Re \lambda + \nu \sin^2 \phi)^2$ and $S_2^2(\phi,\lambda)=(\Im \lambda)^2\cos^22\phi$, this implies from the definition of $S$ that $S^2 = S_1^2+S_2^2$. By letting $\phi (u)= \arcsin(\tfrac{1}{\sqrt{2}}\sqrt{u+1})$ we get
\begin{equation}\label{eq:f}
f^2(u,\lambda):=S_1^2(\phi(u),\lambda)+S_2^2(\phi(u),\lambda) =\left(\Re \lambda + \tfrac{\nu}{2}\left(1+u\right)\right)^2
+(\Im \lambda)^2u^2.
\end{equation}

Notice that the mapping $\phi:[-1,1]\to [0,\pi/2]$ is onto, hence bounding $S^2$ on the set $[0,\pi/2] \times\{\Re \lambda>-\delta\}\setminus \conv (\Gamma)$ is equivalent to bounding $f^2$  on the set $\Omega:=[-1,1] \times\{\Re \lambda>-\delta\}\setminus \conv(\Gamma)$.  
Indeed, $f^2>0$, otherwise both terms in \eqref{eq:f} must vanish, yielding that either $\lambda=-\nu(1+u)/2$ or $\Re \lambda = -\nu/2$, but neither of both conditions are met on the set $\{\Re \lambda>-\delta\}\setminus \intconv (\Gamma)$. Also, since $f(u,\cdot)$ grows linearly with $\lambda$ for $|\lambda|$ big enough, then there exists a compact neighborhood of $(0,0)$ where $f^2$ meets its global minimum. It is not hard to see that $f^2$ has no critical points inside $\Omega$, then its minimum is attained at $\partial\Omega$ which is divided into the sets
\[
\begin{aligned}
\Omega_1 &= \{(u,\lambda)\in \Omega\ |\ u=-1\}, \\
\Omega_2 &= \{(u,\lambda)\in \Omega\ |\ u=1\},\\
\Omega_3 &= \{(u,\lambda)\in \Omega\ |\ |\Im \lambda|\leq \delta,\ \Re \lambda = \delta\},\\
\Omega_4 &= \{(u,\lambda)\in \Omega\ |\ |\Im \lambda|\geq \delta,\ \Re \lambda = -\delta\},\\
\Omega_5 &= \{(u,\lambda)\in \Omega\ |\ |\Im \lambda|=\delta, \ |\Re \lambda|\leq \delta\}.
\end{aligned}
\]
By a simple evaluation, we readily get that
\[
\min_{\Omega_1\cup \Omega_3} f^2(u,\lambda) = \delta^2,\quad \mbox{and } \quad  \min_{\Omega_2} f^2(u,\lambda) =(\nu-\delta)^2+ \delta^2.
\]
Regarding the sets $\Omega_4$ and $\Omega_5$ as two-dimensional sets, standard analysis tools yield that the minimum values of  $f^2$ on each set are attained on their boundary. Indeed,
\[
\min_{\Omega_4\cup\Omega_5} f^2(u,\lambda) = f^2\left(-\delta(1\pm i),\frac{-\tfrac{\nu}{2}(\tfrac{\nu}{2}-\delta)^2}{\delta^2 + \tfrac{\nu^2}{4}}\right) = \frac{\delta^2(\tfrac{\nu}{2}-\delta)^2}{\delta^2 + \tfrac{\nu^2}{4}}.
\]
This completes the first part of the statement. 
Finally, when $\Im \lambda\neq 0$ the convexity of $S^2_1(\phi(\cdot),\lambda)$ and $S^2_2(\phi(\cdot),\lambda)$ plus its non negativeness imply that $S^2(\phi(\cdot),\lambda)\geq \min\{S^2_1(\phi(\cdot),\lambda),S^2_2(\phi(\cdot),\lambda)\}$, which is also a convex and non negative function whose minimum is attained in the set $\{u\in[-1,1] \, : \,  S^2_1(\phi(\cdot),\lambda) = S^2_2(\phi(\cdot),\lambda)\}$. Due to these simple expressions, we easily get that 
\[
S^2(\phi(\cdot),\lambda)\geq \min\{S^2_1(\phi(\cdot),\lambda),S^2_2(\phi(\cdot),\lambda)\}\geq \frac{2(\Im\lambda)^2(\Re \lambda+\nu/2)^2}{(\nu/2+|\Im \lambda|)^2}>0.
\]
\end{proof}

\begin{lemma}
\label{lem:aux1_mod}
Let $\nu>0$  and $\beta\in (0,1)$ be fixed. Assume that $\lambda \in \C$ is such that $\Re \lambda>-\beta \nu/2$ and $(\Im \lambda)^2>\Lambda_0+\Lambda_0^{1/2}(2-\beta)\nu$; then 
\[
|1-\Lambda_0^{-1/2}|\nu+\lambda||>\left(1-\frac{\beta}{2}\right)\Lambda_0^{-1/2}\nu.
\]
\end{lemma}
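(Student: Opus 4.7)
The plan is to rewrite the inequality in terms of $r:=|\nu+\lambda|$ and reduce it to a comparison between $r$ and $\Lambda_0^{1/2}+(1-\beta/2)\nu$. Indeed, the quantity inside the outer absolute value is $1-\Lambda_0^{-1/2}r$, so the claim is equivalent to
\[
|1-\Lambda_0^{-1/2}r|>\left(1-\tfrac{\beta}{2}\right)\Lambda_0^{-1/2}\nu,
\]
which in turn splits into two cases: either $r<\Lambda_0^{1/2}-(1-\beta/2)\nu$, or $r>\Lambda_0^{1/2}+(1-\beta/2)\nu$. I would aim to verify the second (upper) case directly from the hypotheses.

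Next I would expand $r^2$ as $r^2=(\Re\lambda+\nu)^2+(\Im\lambda)^2$ and bound each term below using the assumptions. The hypothesis $\Re\lambda>-\beta\nu/2$ yields $\Re\lambda+\nu>(1-\beta/2)\nu>0$, so
\[
(\Re\lambda+\nu)^2>\left(1-\tfrac{\beta}{2}\right)^2\nu^2,
\]
while the hypothesis on $\Im\lambda$ gives
\[
(\Im\lambda)^2>\Lambda_0+\Lambda_0^{1/2}(2-\beta)\nu.
\]
Adding these bounds yields
\[
r^2>\Lambda_0+\Lambda_0^{1/2}(2-\beta)\nu+\left(1-\tfrac{\beta}{2}\right)^2\nu^2.
\]
The key observation is that the right-hand side is exactly the expansion of $\bigl(\Lambda_0^{1/2}+(1-\beta/2)\nu\bigr)^2$, since $2\Lambda_0^{1/2}(1-\beta/2)\nu=\Lambda_0^{1/2}(2-\beta)\nu$.

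Taking square roots (both sides are positive) gives $r>\Lambda_0^{1/2}+(1-\beta/2)\nu$, equivalently $\Lambda_0^{-1/2}r-1>(1-\beta/2)\Lambda_0^{-1/2}\nu>0$. In particular $1-\Lambda_0^{-1/2}r<0$, so $|1-\Lambda_0^{-1/2}r|=\Lambda_0^{-1/2}r-1$, and the desired strict inequality follows. There is no real obstacle here; the content of the lemma is just that the two numerical hypotheses have been calibrated so that $|\nu+\lambda|$ is driven past the threshold $\Lambda_0^{1/2}+(1-\beta/2)\nu$ on the outside, and the verification is a single completion-of-squares identity.
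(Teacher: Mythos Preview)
Your proof is correct and follows essentially the same approach as the paper: both arguments bound $(\Re\lambda+\nu)^2+(\Im\lambda)^2$ from below using the two hypotheses, recognize the resulting expression as the perfect square $\bigl(\Lambda_0^{1/2}+(1-\beta/2)\nu\bigr)^2$, and conclude by removing the outer absolute value after noting that $\Lambda_0^{-1/2}|\nu+\lambda|>1$. Your presentation is slightly cleaner in that you state the reduction to $r>\Lambda_0^{1/2}+(1-\beta/2)\nu$ up front, but the substance is identical.
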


\begin{proof}
We observe that $|1-\Lambda_0^{-1/2}|\nu+\lambda||\geq 1$ since $\Lambda_0^{-1/2}|\nu+\lambda|>\Lambda_0^{-1/2}|\Im \lambda|>1$. Therefore,
\[
\begin{aligned}
|1-\Lambda_0^{-1/2}|\nu+\lambda||&=\Lambda_0^{-1/2}\sqrt{(\re \lambda + \nu)^2 + (\Im \lambda)^2}-1\\
&\geq \Lambda_0^{-1/2} \sqrt{\left(1-\frac{\beta}{2}\right)^2\nu^2 + \Lambda_0+ \Lambda_0^{1/2}(2-\beta)\nu} \, -1 \\
&=\Lambda_0^{-1/2}\left( \Lambda_0^{1/2} + \Big(1-\frac{\beta}{2}\Big)\nu \right)-1\\
&=\left(1-\frac{\beta}{2}\right)\Lambda_0^{-1/2}\nu.
\end{aligned}
\]
\end{proof}

\begin{lemma}
\label{lem:aux2_mod}
Let $\lambda\in \C$ be as in Lemma \ref{lem:aux1_mod}.  Define $a(\lambda)=|\nu+\lambda|\Lambda_0^{-1/2}$ and  
\[
f_{a(\lambda)}(\varphi) =(1-a(\lambda))\cos \varphi-(1+a(\lambda))\sin \varphi. 
\]
Then $|f_{a(\lambda)}(\varphi)|\geq  \frac{1}{2}\left(1-\frac{\beta}{2}\right)\Lambda_0^{-1/2}\nu$ for every $|\varphi|<\ep$, where 
\[
\ep:=\frac{2\left(2-\beta\right)\Lambda_0^{-1/2}\nu}{4\pi+\left(2+\pi\right)\left(2-\beta\right)\Lambda_0^{-1/2}\nu}>0.
\] 
\end{lemma}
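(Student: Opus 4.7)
The plan is to propagate the lower bound on $|f_{a(\lambda)}(0)|$ obtained from Lemma \ref{lem:aux1_mod} to a small neighborhood $|\varphi| < \ep$ via a Taylor-type estimate. First I note that the hypothesis $(\Im\lambda)^2 > \Lambda_0 + \Lambda_0^{1/2}(2-\beta)\nu > \Lambda_0$ in Lemma \ref{lem:aux1_mod} implies $|\nu + \lambda| \geq |\Im\lambda| > \Lambda_0^{1/2}$, so $a(\lambda) > 1$. Consequently $|1 - a(\lambda)| = a(\lambda) - 1$ and $f_{a(\lambda)}(0) = 1 - a(\lambda)$ satisfies
\[
|f_{a(\lambda)}(0)| = a(\lambda) - 1 > \big(1 - \tfrac{\beta}{2}\big)\Lambda_0^{-1/2}\nu =: t
\]
by Lemma \ref{lem:aux1_mod}, so the claim already holds at $\varphi = 0$ with room to spare.

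Next I would decompose
\[
f_{a(\lambda)}(\varphi) = f_{a(\lambda)}(0) + (1 - a(\lambda))(\cos\varphi - 1) - (1 + a(\lambda))\sin\varphi,
\]
and apply the reverse triangle inequality together with the elementary bounds $|1 - \cos\varphi| \leq \varphi^2/2$ and $|\sin\varphi| \leq |\varphi|$ to get
\[
|f_{a(\lambda)}(\varphi)| \geq (a(\lambda) - 1)\Big(1 - \tfrac{\varphi^2}{2}\Big) - (1 + a(\lambda))|\varphi|.
\]
Using the identity $1 + a(\lambda) = 2 + (a(\lambda) - 1)$ to factor out $a(\lambda) - 1$, followed by the lower bound $a(\lambda) - 1 > t$, this becomes
\[
|f_{a(\lambda)}(\varphi)| \geq (a(\lambda) - 1)\Big(1 - \tfrac{\varphi^2}{2} - |\varphi|\Big) - 2|\varphi| \geq t\Big(1 - \tfrac{\varphi^2}{2} - |\varphi|\Big) - 2|\varphi|,
\]
valid whenever the bracketed expression is nonnegative (which holds uniformly for $|\varphi|$ below a fixed geometric threshold, independent of $a(\lambda)$). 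The key point here is that the potentially unbounded coefficient $(1+a(\lambda))$ is split as $2 + (a(\lambda)-1)$, so that the $a$-dependent part is absorbed into the same $(a(\lambda)-1)$ factor that is controlled from below by $t$, leaving only a harmless residual $-2|\varphi|$.

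Finally, to match the precise form of $\ep$ in the statement, I would replace the quadratic term by a linear bound via $\varphi^2 \leq \pi |\varphi|$ (valid for $|\varphi| \leq \pi$). This produces a linear inequality in $|\varphi|$ of the shape
\[
|f_{a(\lambda)}(\varphi)| \geq t - \Big[\tfrac{(2+\pi)t}{2} + 2\Big]|\varphi|,
\]
and requiring the right-hand side to be at least $t/2$ gives, after solving for $|\varphi|$ and rewriting in terms of $(2-\beta)\Lambda_0^{-1/2}\nu = 2t$, the threshold $\ep$ stated in the lemma. The main point of care — indeed the only delicate part of the proof — is the bookkeeping of constants so that each elementary inequality combines correctly into the precise numerator and denominator of $\ep$ claimed; the argument itself contains no substantive obstacle.
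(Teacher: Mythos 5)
Your decomposition and chain of elementary estimates are correct up to the linear bound $|f_{a(\lambda)}(\varphi)|\ge t-\bigl[\tfrac{(2+\pi)t}{2}+2\bigr]|\varphi|$, where $t:=\bigl(1-\tfrac{\beta}{2}\bigr)\Lambda_0^{-1/2}\nu$, but the final step does not deliver the lemma. Requiring the right-hand side to be at least $t/2$ gives $|\varphi|\le \tfrac{t}{(2+\pi)t+4}$, i.e., writing $s=(2-\beta)\Lambda_0^{-1/2}\nu=2t$, the threshold $\tfrac{s}{(2+\pi)s+8}$, which is \emph{not} the stated $\ep=\tfrac{2s}{4\pi+(2+\pi)s}$. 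Cross-multiplying, $\tfrac{s}{(2+\pi)s+8}<\tfrac{2s}{4\pi+(2+\pi)s}$ reduces to $4\pi-16<(2+\pi)s$, which holds for every $s>0$; hence your argument establishes the claimed lower bound only on an interval strictly smaller than $(-\ep,\ep)$, and the assertion that the bookkeeping reproduces the precise numerator and denominator of $\ep$ is exactly where the proof breaks down.

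Moreover, this cannot be repaired by sharpening your elementary inequalities, because the bound on the full interval $|\varphi|<\ep$ fails in general: Lemma \ref{lem:aux1_mod} allows $a(\lambda)-1$ arbitrarily close to its infimum $t$, and taking for instance $\Lambda_0=1$, $\beta=\tfrac12$, $\nu=\tfrac{2}{15}$ (so $t=0.1$, $\ep\approx 0.0294$, $a(\lambda)\approx 1.1$) one finds at $\varphi=-0.0294$ that $|f_{a(\lambda)}(\varphi)|\approx 0.038<t/2=0.05$. The paper's proof is of a different shape and in effect proves the inequality on a smaller set: it uses that $f_{a(\lambda)}$ is decreasing and evaluates at the single point $-\tfrac12\sin^{-1}\ep$, with $\cos x\ge 1-\tfrac{2x}{\pi}$, $\sin x\le x$ and $\sin^{-1}\ep\le\tfrac{\pi}{2}\ep$, so it yields $|f_{a(\lambda)}(\varphi)|\ge t/2$ only for $\varphi\ge-\tfrac12\sin^{-1}\ep$; that smaller range is precisely what is used in Lemma \ref{lem:aux3_mod}, where $\phi=\pi/4+\varphi$ with $|\varphi|<\tfrac12\sin^{-1}\ep$. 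So you should either restate the conclusion on the interval your estimate actually yields and verify it still covers the set $H_3$ in Lemma \ref{lem:aux3_mod} (not automatic when $\Lambda_0^{-1/2}\nu$ is large), or follow the monotonicity-plus-endpoint-evaluation route of the paper.
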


\begin{proof}
First, we notice that the function $f_{a(\lambda)}(\varphi)$ is continuous and decreasing on its argument since $a(\lambda)>1$. Also,   $f_{a(\lambda)}(\varphi)<0$ for $\varphi\in((1-a(\lambda))/(1+a(\lambda)),0)$. Therefore, $|f_{a(\lambda)}(\varphi)|\geq -f_{a(\lambda)}(-\frac{1}{2}\sin^{-1} \ep)\geq0$. Since $\cos \varphi \geq \frac{2}{\pi}\varphi+1$ and $\frac{\pi}{2}\varphi\geq \sin^{-1} \varphi $ hold for $\varphi\in[0,\pi/2]$ and $\varphi \geq \sin \varphi$, we get
\[
\begin{aligned}
|f_{a(\lambda)}(\varphi)|&\geq -f_{a(\lambda)}\left(-\frac{1}{2}\sin^{-1} \ep\right)\\
& = (a(\lambda)-1)\cos \left(\frac{1}{2}\sin^{-1} \ep\right)-(1+a(\lambda))\sin \left(\frac{1}{2}\sin^{-1} \ep\right)\\
 & \geq (a(\lambda)-1)\left(1-\frac{\sin^{-1} \ep}{\pi}\right)-(1+a(\lambda)) \left(\frac{1}{2}\sin^{-1} \ep\right)\\
 & \geq (a(\lambda)-1)\left(1-\frac{\ep}{2}\right)- \frac{\pi}{4} (1+a(\lambda))\ep.
\end{aligned}
\]
This last inequality and Lemma \ref{lem:aux1_mod} imply that 
\[
\begin{aligned}
|f_{a(\lambda)}(\varphi)|&\geq a(\lambda)\left(1-\left(\frac{1}{2}+\frac{\pi}{4}\right)\ep\right) -\left(1-\left(\frac{1}{2}-\frac{\pi}{4}\right)\ep\right)\\
&\geq \left(1+\left(1-\frac{\beta}{2}\right)\Lambda_0^{-1/2}\nu\right)\left(1-\left(\frac{1}{2}+\frac{\pi}{4}\right)\ep\right) -\left(1-\left(\frac{1}{2}-\frac{\pi}{4}\right)\ep\right)\\
&\geq \left(1-\frac{\beta}{2}\right)\Lambda_0^{-1/2}\nu -\left[\frac{\pi}{2}+\left(\frac{1}{2}+\frac{\pi}{4}\right)\left(1-\frac{\beta}{2}\right)\Lambda_0^{-1/2}\nu\right]\ep\\
&\geq \frac{1}{2}\left(1-\frac{\beta}{2}\right)\Lambda_0^{-1/2}\nu,
\end{aligned}
\]
where the last inequality holds for $\ep$ chosen as in the statement of the lemma. This finishes the proof.
\end{proof}

\begin{lemma}\label{lem:aux3_mod}
    Let $\nu>0$ be fixed and  $\zeta(\nu)>0$ be as in Lemma \ref{thm:steady_result}. Also, let $\delta \in(0,\zeta(\nu))$ be fixed and define   
%
    \begin{equation}\label{eq:setG2}
        G_2 = \{\lambda\in \C \, :\,\Re \lambda>-\delta, \ \ |\Im \lambda|>\delta \}\cup\{\delta(t \pm i) \, : \, t\in (-1,1)\}.
    \end{equation}

    The function $M:[0,\pi/2]\times G_2\to \R$, given by
    \begin{equation}\label{eq:M}
M(\phi, \lambda)= \min\left\{\frac{1}{|\cos \phi-|\lambda+\nu|\Lambda_0^{-1/2}\sin \phi|}, \frac{|\lambda| +|\lambda+\nu|}{|\lambda^*\cos^2\phi + (\lambda+\nu)\sin^2\phi|}
\right\},
\end{equation}
 is uniformly bounded on its domain.
\end{lemma}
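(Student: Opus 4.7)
The plan is to introduce the shorthand
\[
A(\phi,\lambda):=\frac{1}{|\cos\phi-a(\lambda)\sin\phi|},\qquad B(\phi,\lambda):=\frac{|\lambda|+|\lambda+\nu|}{S(\phi,\lambda)},
\]
where $a(\lambda):=|\lambda+\nu|\Lambda_0^{-1/2}$ and $S(\phi,\lambda):=|\lambda^*\cos^2\phi+(\lambda+\nu)\sin^2\phi|$ is the quantity appearing in Lemma~\ref{lem:aux0_mod}, so that $M=\min\{A,B\}$. It then suffices to show that at each $(\phi,\lambda)\in [0,\pi/2]\times G_2$ at least one of $A$ or $B$ admits a bound depending only on $\nu,\delta,\Lambda_0$. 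The value $\phi=\pi/4$ plays a distinguished role: at $\phi=\pi/4$ the expression inside the modulus defining $S$ simplifies to $\Re\lambda+\nu/2$, which is purely real and uniformly bounded, so $S$ does not inherit any growth from $|\Im\lambda|$, forcing $B$ to blow up as $|\Im\lambda|\to\infty$. In that regime one must instead control $A$, which is the purpose of Lemmas~\ref{lem:aux1_mod}--\ref{lem:aux2_mod}.

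Assuming $\delta<\nu/2$ (which we may do, if necessary by shrinking $\delta$, since a uniform bound for the smaller parameter range immediately transfers to the set $G_2$ for the original $\delta$), I fix $\beta\in (2\delta/\nu,1)$, set $R_0:=\sqrt{\Lambda_0+\Lambda_0^{1/2}(2-\beta)\nu}$, and take $\epsilon\in (0,\pi/4)$ to be the constant produced by Lemma~\ref{lem:aux2_mod} for this $\beta$. I would partition $[0,\pi/2]\times G_2$ into three regions. On Region (i), $|\Im\lambda|\leq R_0$, I bound $B$: Lemma~\ref{lem:aux0_mod} gives $S\geq C_1(\nu,\delta)>0$ on all of $G_2$ (the estimate extends to the top and bottom edges of $\Gamma\cap G_2$ by a direct evaluation in the same spirit as the proof of that lemma), so when $\Re\lambda$ is also bounded $B$ is uniformly bounded; when $\Re\lambda$ is large I replace that estimate by the sharper real-part bound $S\geq \Re\lambda+\nu\sin^2\phi\geq \Re\lambda$ and combine it with $|\lambda|+|\lambda+\nu|\leq 2\Re\lambda+2R_0+\nu$ to keep $B$ controlled as $\Re\lambda\to\infty$.

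On Region (ii), $|\Im\lambda|>R_0$ and $|\phi-\pi/4|<\epsilon$, I bound $A$ via the identity
\[
\cos\phi-a(\lambda)\sin\phi=\tfrac{1}{\sqrt{2}}\,f_{a(\lambda)}(\phi-\pi/4),
\]
obtained from the angle-addition formulas together with the definition of $f_a$ in Lemma~\ref{lem:aux2_mod}. The hypotheses of that lemma hold on this region, since $\Re\lambda>-\delta>-\beta\nu/2$ and $(\Im\lambda)^2>R_0^2=\Lambda_0+\Lambda_0^{1/2}(2-\beta)\nu$, so $|\cos\phi-a(\lambda)\sin\phi|\geq \tfrac{1}{2\sqrt{2}}(1-\tfrac{\beta}{2})\Lambda_0^{-1/2}\nu$ and $A$ is uniformly bounded. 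On Region (iii), $|\Im\lambda|>R_0$ and $|\phi-\pi/4|\geq \epsilon$, I return to $B$: here $|\cos 2\phi|\geq \sin 2\epsilon$, and the explicit identity $S^2=(\Re\lambda+\nu\sin^2\phi)^2+(\Im\lambda)^2\cos^2 2\phi$ gives $S\geq |\Im\lambda|\sin 2\epsilon$, together with the reinforcement $S\geq |\lambda|\sin 2\epsilon$ whenever $\Re\lambda\geq 0$. Since $|\Re\lambda|<\delta$ whenever $\Re\lambda<0$ and $|\lambda|+|\lambda+\nu|=O(|\lambda|)$, combining these estimates produces a uniform bound on $B$ throughout Region (iii).

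The main obstacle is Region (ii): this is precisely the regime in which $B$ genuinely fails to be bounded, and only Lemma~\ref{lem:aux2_mod}, which in turn rests on Lemma~\ref{lem:aux1_mod}, supplies the lower bound on the $A$-denominator that rescues the argument. Once the three regional bounds are assembled, the uniform boundedness of $M$ on $[0,\pi/2]\times G_2$ follows at once.
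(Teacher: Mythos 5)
Your argument is, in substance, the paper's own: the same three auxiliary lemmas are used in the same roles, with the second entry of the minimum controlled wherever $|\cos 2\phi|$ or the constant lower bound on $S$ from Lemma \ref{lem:aux0_mod} saves the denominator, and the first entry controlled near $\phi=\pi/4$ for large $|\Im\lambda|$ via Lemmas \ref{lem:aux1_mod}--\ref{lem:aux2_mod} through the identity $\cos\phi-a(\lambda)\sin\phi=\tfrac{1}{\sqrt2}f_{a(\lambda)}(\phi-\pi/4)$. The only real difference is organizational (you split first by $|\Im\lambda|\lessgtr R_0$ and then by angle, the paper splits first by angle into $H_1$, $H_2$, $H_3$), and your explicit bounds in regions (i) and (iii), including the real-part estimate $S\geq\Re\lambda$ for $\Re\lambda\geq 0$, are cleaner than the paper's continuity-plus-behaviour-at-infinity argument on $H_1$; that part is fine.

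Two caveats. First, the parenthetical reduction to $\delta<\nu/2$ ``by shrinking $\delta$'' is not valid: $G_2$ is not monotone in $\delta$, since replacing $\delta$ by $\delta'<\delta$ tightens the half-plane constraint from $\Re\lambda>-\delta$ to $\Re\lambda>-\delta'$, so a uniform bound on $[0,\pi/2]\times G_2(\delta')$ says nothing about the strip $-\delta<\Re\lambda\leq-\delta'$, $|\Im\lambda|>\delta$, which belongs to the original domain. In fact, for $\delta\geq\nu/2$ (which the hypothesis $\delta\in(0,\zeta(\nu))$ with $\zeta(\nu)>\nu/2$ formally allows) the conclusion can fail: at $\phi=\pi/4$, $\lambda=-\nu/2+iy$ with $y^2=\Lambda_0-\nu^2/4$ both denominators in \eqref{eq:M} vanish. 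So $\delta<\nu/2$ must simply be assumed; note the paper's proof does the same implicitly by choosing $\beta$ with $2\delta<\beta\nu$, so this is a defect of the lemma's stated hypotheses rather than of your core argument, but your justification for the reduction should be deleted rather than kept. Second, you apply Lemma \ref{lem:aux2_mod} on the full stated range $|\varphi|<\ep$, whereas its proof (monotonicity of $f_{a(\lambda)}$ evaluated at $-\tfrac12\sin^{-1}\ep$) really only covers $|\varphi|\leq\tfrac12\sin^{-1}\ep$, which is why the paper cuts $H_2,H_3$ at that smaller width; this is harmless for you, since your region (iii) estimate $|\cos 2\phi|=\sin 2|\varphi|\geq\ep$ already works for $|\varphi|\geq\tfrac12\sin^{-1}\ep$, so moving your angular threshold there repairs it at no cost.
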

\begin{proof}
    In order to prove this, 
    %
    let $\beta\in (0,1)$ be a fixed constant such that $2\delta<\beta\nu$ and define $\ep>0$ as
\[
\ep=\frac{2\left(2-\beta\right)\Lambda_0^{-1/2}\nu}{4\pi+\left(2+\pi\right)\left(2-\beta\right)\Lambda_0^{-1/2}\nu}.
\]
Then, $[0,\pi/2]\times G_2$ is the union of the following three sets:
\[
\begin{aligned}
H_1 &= ([0,\pi/2]\setminus(\pi/4-\ep,\pi/4+\ep))\times G_2, \\
H_2 &=(\pi/4-\tfrac{1}{2}\sin^{-1}\ep,\pi/4+\tfrac{1}{2}\sin^{-1}\ep)\times \{\lambda\in G_2\,|\, (\Im \lambda)^2\leq\Lambda_0+\Lambda_0^{1/2}(2-\beta)\nu \},\\
H_3 &=(\pi/4-\tfrac{1}{2}\sin^{-1}\ep,\pi/4+\tfrac{1}{2}\sin^{-1}\ep)\times \{\lambda\in G_2\,|\, (\Im \lambda)^2>\Lambda_0+\Lambda_0^{1/2}(2-\beta)\nu \}
\end{aligned}
\]
(see Figures \ref{figresolvent2} and \ref{figresolvent5}).

Before proving that $M(\phi, \lambda)$ is uniformly bounded on $(\phi,\lambda)\in [0,\pi/2]\times G_2$, let us examine Figures \ref{figresolvent2} and \ref{figresolvent5}. Figure \ref{figresolvent2} shows the projections of the sets $H_1$ and $H_2\cup H_3$ (regions in light blue and in light green, respectively) onto the plane $\Im \lambda = -d$ for any constant $d>\delta$. The purple and blue curves represent the regions where $(\Re \lambda + \nu \sin^2 \phi)^2=0$ and $(\Im \lambda)^2\cos^22\phi=0$ in the plane $\Im \lambda = -d$. Both curves intersect at the point $(\pi/4,-\nu/2)$ which lies outside of the region of interest because $\delta>\zeta(\nu)\geq -\nu/2$. This fact implies that 
\[
|\lambda^*\cos^2\phi + (\lambda+\nu)\sin^2\phi|>0,
\] 
in every plane of the form $\Im \lambda = -d$ for any constant $d>\delta$.

\begin{figure}[t]
\begin{center}
\includegraphics*[scale=0.7]{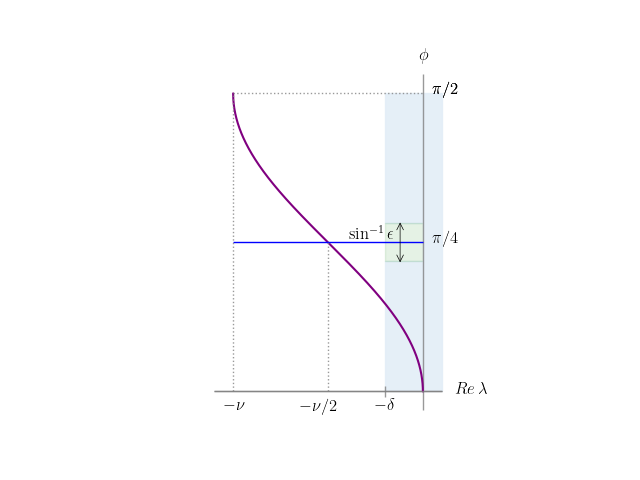} 
\end{center}
\caption{\label{figresolvent2} \small{Depiction of the sets $H_1$ and $H_2\cup H_3$, regions in light blue and light green colors, respectively, projected onto the plane $\Im \lambda = -d$ for any constant $d>\delta$} (color online).}
\end{figure}

\begin{figure}[t]
\begin{center}
\includegraphics*[scale=0.6]{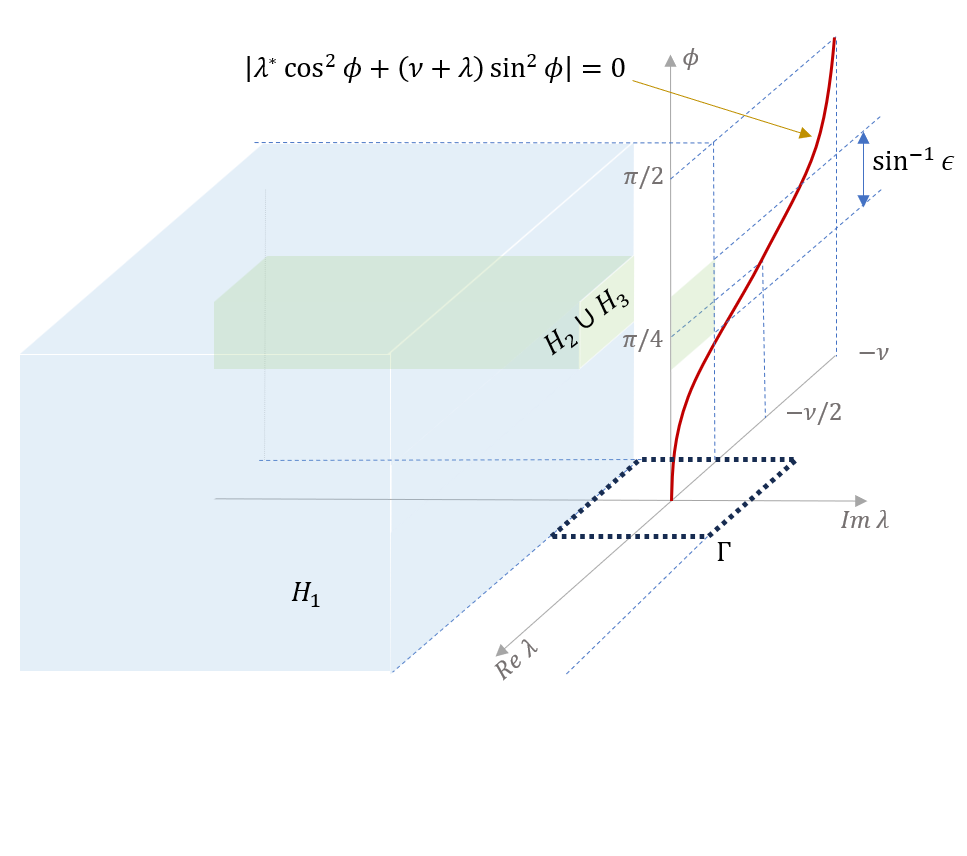} 
\end{center}
\caption{\label{figresolvent5} \small{Illustration of the sets $H_1$ and $H_2\cup H_3$ (in light blue and light green colors, respectively), in the region $\Im \lambda \leq 0$. Once again, the red curve and the black dotted curve are the sets where $|\lambda^*\cos^2\phi + (\lambda+\nu)\sin^2\phi|=0$ and the curve $\Gamma$, respectively (color online).}}
\end{figure}

Figure \ref{figresolvent5} shows the sets $H_1$ and $H_2\cup H_3$, regions in light blue and light green colors, respectively, in the region $\Im \lambda \leq 0$. Once again, the red curve and the black dotted curve represent the sets where $|\lambda^*\cos^2\phi + (\lambda+\nu)\sin^2\phi|=0$ and the curve $\Gamma$, respectively. This graphical representation makes evident that $M(\phi,\lambda)<\infty$ since this curve is outside $[0,\pi/2]\times G_2$. Despite of the boundedness of the term $|\lambda^*\cos^2\phi + (\lambda+\nu)\sin^2\phi|^{-1}$, the uniform boundedness of $M(\phi,\lambda)$ does not follow from it since 
\[
\left.\lim_{|\Im\lambda| \to \infty }\frac{|\lambda| +|\lambda+\nu|}{\sqrt{(\Re \lambda + \nu \sin^2 \phi)^2 + (\Im \lambda)^2\cos^22\phi}}\right|_{\pi/4}=\infty.
\]
This behavior justifies the necessity for the splitting of $[0,\pi/2]\times G_2$ into the sets $H_i$'s and the two resolvent estimates in Lemma \ref{lem:res_equations}. 

Notice that the set $H_2$ is empty provided that $\delta>\Lambda_0+\Lambda_0^{1/2}(2-\beta)\nu$. Also, due to the proximity of $\phi$ to $\pi/4$ in $H_2$ and $H_3$, we assume that, in those sets, $\phi = \pi/4 +\varphi$ with $|\varphi|<\tfrac{1}{2}\sin^{-1}\ep$. Thus, we easily get that 
\[
\ep^2 \geq \sin^2(2\varphi) = \cos^22(\pi/4+\varphi)=\cos^22\phi.
\] 

Now, we examine $M(\phi,\lambda)$ on each of the sets  $H_i$, $i =1,2,3$.
\begin{itemize}
\item[($H_1$)] 
Notice that $|\cos 2\phi| \geq\ep$ in $H_1$. Therefore, 
\[
\begin{aligned}
M(\phi,\lambda)&\leq \frac{(|\lambda| +|\lambda+\nu|)}{\sqrt{(\Re\lambda+\nu\sin^2\phi)^2 +(\cos(2\phi)\Im \lambda)^2} }\\
&\leq \frac{(|\lambda| +|\lambda+\nu|)}{\sqrt{(\Re\lambda+\nu\sin^2\phi)^2 +\ep^2(\Im\lambda)^2} }.
\end{aligned}
\]
Observe that the term on the right hand side is uniformly bounded since it is continuous as a function of $(\lambda,\phi)\in C_1\times ([0,\pi_4-\frac{1}{2}\sin^{-1}\ep]\cup[\pi_4+\frac{1}{2}\sin^{-1}\ep,\pi/2])$ and it is bounded as $|\lambda|\to \infty$.
\item[($H_2$)] In this set, Lemma \ref{lem:aux1_mod} and equation \eqref{eq:M} imply that
 \[
\begin{aligned}
M(\phi,\lambda) &\leq \left(\frac{\nu/2+|\Im \lambda|}{|\Im\lambda|}\right)\left(\frac{|\lambda| +|\lambda+\nu|}{|\Re \lambda+\nu/2|}\right)\\
&\leq \left(\frac{\nu/2+|\Im \lambda|}{|\Im\lambda|}\right)\left(\frac{|\Re\lambda| +|\Re\lambda+\nu| + 2|\Im \lambda|}{|\Re \lambda+\nu/2|}\right).\\
\end{aligned} 
\]
Thus, $M(\phi,\lambda)$ is uniformly bounded in $H_2$ since $\Im \lambda<(\Im \lambda)^2\leq\Lambda_0+\Lambda_0^{1/2}(2-\beta)\nu$.

\item[($H_3$)] By the selection of $\beta$, it follows that $\Re \lambda >-\beta\nu/2$ and $(\Im \lambda)^2>\Lambda_0+\Lambda_0^{1/2}(2-\beta)\nu$. Hence, the conclusion of Lemma \ref{lem:aux1_mod} holds. Moreover, the selection of $\ep$ implies that Lemma \ref{lem:aux2_mod} also holds. These observations immediately imply that
\[
\begin{aligned}
M(\phi,\lambda) &= M(\pi/4+\varphi,\lambda) \\
&\leq \frac{\sqrt{2}}{|(1-|\lambda+\nu|\Lambda_0^{-1/2})\cos \varphi-(1+|\lambda+\nu|\Lambda_0^{-1/2})\sin \varphi|}\\
&\leq  \frac{2\sqrt{2}\Lambda_0^{1/2}}{(1-\beta/2)\nu},
\end{aligned}
\] 
for $(\phi,\lambda)\in H_3$. Hence $M(\phi,\lambda)$ is uniformly bounded in $H_3$.
\end{itemize}

The combination of the estimates in these three cases proves the lemma.
\end{proof}


\begin{thebibliography}{10}

\bibitem{AGJ90}
{\sc J.~Alexander, R.~A. Gardner, and C.~K. R.~T. Jones}, {\em A topological
  invariant arising in the stability analysis of travelling waves}, J. Reine
  Angew. Math. \textbf{410} (1990), pp.~167--212.

\bibitem{BrKOh12}
{\sc A.~Brataas, A.~Kent, and H.~Ohno}, {\em Current-induced torques in
  magnetic materials}, Nature Mater. \textbf{11} (2012), pp.~372--382.

\bibitem{BrJoK14}
{\sc J.~C. Bronski, M.~A. Johnson, and T.~Kapitula}, {\em An instability index
  theory for quadratic pencils and applications}, Comm. Math. Phys.
  \textbf{327} (2014), no.~2, pp.~521--550.

\bibitem{CMMP23}
{\sc A.~Capella, C.~Melcher, L.~Morales, and R.~G. Plaza}, {\em Nonlinear
  stability of static {N}\'{e}el walls in ferromagnetic thin films}.
\newblock Preprint, 2023. arXiv:2309.04432.

\bibitem{CMO07}
{\sc A.~Capella, C.~Melcher, and F.~Otto}, {\em Wave-type dynamics in
  ferromagnetic thin films and the motion of {N}\'eel walls}, Nonlinearity
  \textbf{20} (2007), no.~11, pp.~2519--2537.

\bibitem{Carb10}
{\sc G.~Carbou}, {\em Stability of static walls for a three-dimensional model
  of ferromagnetic material}, J. Math. Pures Appl. (9) \textbf{93} (2010),
  no.~2, pp.~183--203.

\bibitem{CarLab06}
{\sc G.~Carbou and S.~Labb{\'e}}, {\em Stability for static walls in
  ferromagnetic nanowires}, Discrete Contin. Dyn. Syst. Ser. B \textbf{6}
  (2006), no.~2, pp.~273--290.

\bibitem{CaMR22}
{\sc G.~Carbou, M.~Moussaoui, and R.~Rachi}, {\em Stability of steady states in
  ferromagnetic rings}, J. Math. Phys. \textbf{63} (2022), no.~3, pp.~Paper No.
  031508, 28.

\bibitem{ChMu13}
{\sc M.~Chermisi and C.~B. Muratov}, {\em One-dimensional {N}\'eel walls under
  applied external fields}, Nonlinearity \textbf{26} (2013), no.~11,
  pp.~2935--2950.

\bibitem{CrL03}
{\sc D.~Cramer and Y.~Latushkin}, {\em Gearhart-{P}r\"uss theorem in stability
  for wave equations: a survey}, in Evolution equations, G.~Goldstein,
  R.~Nagel, and S.~Romanelli, eds., vol.~234 of Lecture Notes in Pure and Appl.
  Math., Dekker, New York, 2003, pp.~105--119.

\bibitem{EN00}
{\sc K.-J. Engel and R.~Nagel}, {\em One-parameter semigroups for linear
  evolution equations}, vol.~194 of Graduate Texts in Mathematics,
  Springer-Verlag, New York, 2000.

\bibitem{EN06}
{\sc K.-J. Engel and R.~Nagel}, {\em A short course on
  operator semigroups}, Universitext, Springer-Verlag, New York, 2006.

\bibitem{GC04}
{\sc C.~J. Garcia-Cervera}, {\em One-dimensional magnetic domain walls}, Eur.
  J. Appl. Math. \textbf{15} (2004), no.~4, p.~451–486.

\bibitem{Gilb55}
{\sc T.~L. Gilbert}, {\em A {L}agrangian formulation of the gyromagnetic
  equation of the magnetic field}, Phys. Rev. \textbf{100} (1955), p.~1243.
\newblock Abstract only; full report in T. L. Gilbert, Armor Research
  Foundation Project No. A059, Supplementary Report, May 1, 1956 (unpublished).

\bibitem{Gilb04}
{\sc T.~L. Gilbert}, {\em A phenomenological
  theory of damping in ferromagnetic materials}, IEEE Trans. Magnetics
  \textbf{40} (2004), no.~6, pp.~3443--3449.

\bibitem{HuSch98}
{\sc A.~Hubert and R.~Sch\"{a}fer}, {\em Magnetic Domains. {T}he Analysis of
  Magnetic Microstructures}, Springer-Verlag Berlin Heidelberg, 1998.

\bibitem{KHKT13}
{\sc T.~Kapitula, E.~Hibma, H.-P. Kim, and J.~Timkovich}, {\em Instability
  indices for matrix polynomials}, Linear Algebra Appl. \textbf{439} (2013),
  no.~11, pp.~3412--3434.

\bibitem{KaPro13}
{\sc T.~Kapitula and K.~Promislow}, {\em Spectral and dynamical stability of
  nonlinear waves}, vol.~185 of Applied Mathematical Sciences, Springer, New
  York, 2013.

\bibitem{Kat80}
{\sc T.~Kato}, {\em Perturbation theory for linear operators}, Classics in
  Mathematics, Springer-Verlag, Berlin, 1995.
\newblock Reprint of the 1980 edition.

\bibitem{Kruk87}
{\sc S.~Krukowski}, {\em A note on the stability analysis of a one-dimensional
  {B}loch wall}, Z. Phys. B: Condens. Matter \textbf{65} (1987), no.~3,
  pp.~323--328.

\bibitem{LabBer99}
{\sc S.~Labb\'{e} and P.-Y. Bertin}, {\em Microwave polarizability of ferrite
  particles with non-uniform magnetization}, J. Magn. Magn. Mater. \textbf{206}
  (1999), no.~1, pp.~93--105.

\bibitem{LanLif35}
{\sc L.~D. Landau and E.~M. Lifshitz}, {\em Theory of the dispersion of
  magnetic permeability in ferromagnetic bodies}, Phys. Z. Sowietunion
  \textbf{8} (1935), pp.~153--169.

\bibitem{LMPS16}
{\sc C.~Lattanzio, C.~Mascia, R.~G. Plaza, and C.~Simeoni}, {\em Analytical and
  numerical investigation of traveling waves for the {A}llen-{C}ahn model with
  relaxation}, Math. Models Methods Appl. Sci. \textbf{26} (2016), no.~5,
  pp.~931--985.

\bibitem{Ma88}
{\sc A.~S. Markus}, {\em Introduction to the spectral theory of polynomial
  operator pencils}, vol.~71 of Translations of Mathematical Monographs,
  American Mathematical Society, Providence, RI, 1988.

\bibitem{Melc10}
{\sc C.~Melcher}, {\em Thin-film limits for {L}andau-{L}ifshitz-{G}ilbert
  equations}, SIAM J. Math. Anal. \textbf{42} (2010), no.~1, pp.~519--537.

\bibitem{MuOs06}
{\sc C.~B. Muratov and V.~V. Osipov}, {\em Optimal grid-based methods for thin
  film micromagnetics simulations}, J. Comput. Phys. \textbf{216} (2006),
  no.~2, pp.~637--653.

\bibitem{MuYa16}
{\sc C.~B. Muratov and X.~Yan}, {\em Uniqueness of one-dimensional {N}\'{e}el
  wall profiles}, Proc. A. \textbf{472} (2016), no.~2187, pp.~20150762, 15.

\bibitem{Neel55}
{\sc L.~N\'{e}el}, {\em Energie des parois de {B}loch dans les couches minces},
  C. R. Hebd. Seances Acad. Sci. \textbf{241} (1955), no.~6, pp.~533--537.

\bibitem{Pa83}
{\sc A.~Pazy}, {\em Semigroups of linear operators and applications to partial
  differential equations}, vol.~44 of Applied Mathematical Sciences,
  Springer-Verlag, New York, 1983.

\bibitem{PW92b}
{\sc R.~L. Pego and M.~I. Weinstein}, {\em Eigenvalues, and instabilities of
  solitary waves}, Philos. Trans. Roy. Soc. London Ser. A \textbf{340} (1992),
  no.~1656, pp.~47--94.

\bibitem{PW94}
{\sc R.~L. Pego and M.~I. Weinstein}, {\em Asymptotic stability
  of solitary waves}, Comm. Math. Phys. \textbf{164} (1994), no.~2,
  pp.~305--349.

\bibitem{ReRo04}
{\sc M.~Renardy and R.~C. Rogers}, {\em An introduction to partial differential
  equations}, vol.~13 of Texts in Applied Mathematics, Springer-Verlag, New
  York, second~ed., 2004.

\bibitem{San02}
{\sc B.~Sandstede}, {\em Stability of travelling waves}, in Handbook of
  dynamical systems, Vol. 2, B.~Fiedler, ed., North-Holland, Amsterdam, 2002,
  pp.~983--1055.

\bibitem{Sat76}
{\sc D.~H. Sattinger}, {\em On the stability of waves of nonlinear parabolic
  systems}, Adv. Math. (NY) \textbf{22} (1976), no.~3, pp.~312--355.

\bibitem{Tak11}
{\sc K.~Takasao}, {\em Stability of travelling wave solutions for the
  {L}andau-{L}ifshitz equation}, Hiroshima Math. J. \textbf{41} (2011), no.~3,
  pp.~367--388.

\bibitem{ZH98}
{\sc K.~Zumbrun and P.~Howard}, {\em Pointwise semigroup methods and stability
  of viscous shock waves}, Indiana Univ. Math. J. \textbf{47} (1998), no.~3,
  pp.~741--871.

\end{thebibliography}



\end{document}